\newtheorem{theorem}{Theorem}
\newtheorem{corollary}{Corollary}
\newtheorem{proposition}{Proposition}
\newtheorem{definition}{Definition}
\newtheorem{lemma}{Lemma}
\newtheorem{example}{Example}
\newtheorem{remark}{Remark}
\newcommand{\be}{\begin{equation}}
\newcommand{\ee}{\end{equation}}
\newcommand*\widefbox[1]{\fbox{\hspace{2em}#1\hspace{2em}}}
\numberwithin{equation}{section}
\begin{document}

\markboth{A. Bautista, A. Ibort, J. Lafuente}{Classification of Jacobi curves}

\title{On the classification of Jacobi curves and their conformal curvatures}

\author{A. BAUTISTA}

\address{Departamento de An\'alisis Econ\'omico: Econom\'{\i}a Cuantitativa \\ Universidad Aut\'onoma de Madrid \\ C/ Francisco Tom\'as y Valiente 5, 28049 Madrid, Spain.}
\email{alfredo.bautista@uam.es}

\author{A. IBORT}

\address{Departamento de Matem\'aticas, Universidad Carlos III de Madrid \\ Avda. de la
Universidad 30, 28911 Legan\'es, Madrid, Spain, and \\ ICMAT, Instituto de Ciencias Matem\'{a}ticas (CSIC-UAM-UC3M-UCM)\\
C/ Nicol\'as Cabrera, 13-15, 28049, Madrid, Spain.}
\email{albertoi@math.uc3m.es}

\author{J. LAFUENTE}

\address{Departamento de Geometr\'{\i}a y Topolog\'{\i}a \\ Universidad Complutense de
Madrid \\ Avda. Complutense s/n, 28040 Madrid, Spain.} 
\email{jlafuente@mat.ucm.es}

\begin{abstract}
This paper describes the theory of Jacobi curves, a far reaching extension of the spaces of Jacobi fields along Riemannian geodesics, developed by Agrachev and Zelenko. Jacobi curves are curves in the Lagrangian Grassmannian of a symplectic space satisfying appropriate regularity conditions.  It is shown that they are fully characterised in terms of a family of conformal symplectic invariant curvatures.  In addition to a new derivation of the Ricci curvature tensor of a Jacobi curve, a Cartan-like theory of Jacobi curves is presented that allows to associate to any admissible Jacobi curve a reduced normal Cartan matrix.   A reconstruction theorem proving that an admissible Jacobi curve is characterised, up to conformal symplectic transformations, by a reduced normal Cartan matrix and  a geometric parametrization is obtained.   The theory of cycles is studied proving that they correspond to flat Jacobi curves.
\end{abstract}

\maketitle

\keywords{Jacobi curves, Lagrangian subspaces, Ricci curvature, conformal invariants, Cartan matrix}



\tableofcontents

\section{Introduction}

A complete characterisation of Jacobi curves will be presented.  Jacobi curves are curves $\Gamma(t)$ of Lagrangian subspaces in a given symplectic space $W$ satisfying appropriate regularity conditions.  A natural extension of Cartan's geometry to curves in Lagrangian Grassmannians will provide the ground to construct a family of curvatures that will classify them.   

The theory of Jacobi curves was initiated by A. Agrachev and I. Zelenko \cite{Ag02,Ag02b} in the context of the theory of control of dynamical systems and their applications (see, for instance, \cite{Ag08} and references therein).   In its original formulation such curves were obtained by means of the flow of a Hamiltonian system on the cotangent bundle $T^* Q$ of a smooth manifold $Q$. In such context a Jacobi curve is the curve in the Lagrangian Grassmannian of the tangent space $T_{(q_0,p_0)}(T^*Q)$ at a fixed point $(q_0,p_0) \in T^*Q$, obtained by pulling back the vertical subspaces of $T(T^*Q)$ along an integral curve of a Hamiltonian system to its initial data $(q_0,p_0)$.   However curves of Lagrangian subspaces appear in a variety of different contexts, for instance, it was recognised early the natural relation between solutions of the Linear-Quadratic regulator problem in optimal control theory, solutions of matrix Ricatti equations and Lagrangian subspaces (see, for instance \cite{Ca80, He80, De03}).   The study of topological properties of curves of Lagrangian subspaces has been also a subject of interest (see, for instance, \cite{Ma15}) or, in a different context, \cite{Ba22a}, for the appearance of Jacobi curves in the study of the geometry of Lorentzian manifolds.   More precisely, if $(M^m, \mathcal{C})$ is a strongly causal conformal Lorentzian spacetime, the manifold $\mathcal{N}$ of its lightrays carries a natural contact structure $\mathcal{H}$ characterized by the fact the each sky $S(x)$ (the set of light rays passing through $x \in M$) is a Legendrian submanifold.  Then for each $\gamma \in \mathcal{N}$, the contact hyperplane $\mathcal{H}_\gamma$ inherits a (conformal) symplectic structure and it makes sense to consider its Lagrangian Grassmannian.  The fact that $\Gamma (t) = T_\gamma (S(\gamma(t)))$ is a Jacobi curve in the Lagrangian Grassmannian of $\mathcal{H}_\gamma$, and that (local) conformal transformations in $M$ preserve the structure $(\mathcal{N}, \mathcal{H})$, will allows us to apply the theory developed in this paper to obtain conformal curvatures over lightrays, that would give rise to conformal invariants of our manifold $(M, \mathcal{C})$.  

One of the main objectives of this work is the construction of geometric invariants associated to such curves that would provide a new and useful insight into the structure of the solutions of matrix Ricatti equation, Hamiltonian systems or any other structure that could be associated to them, like the aforementioned study of causality.  Thus, beyond the interest in control theory, the analysis of Jacobi curves embraces multiple geometrical problems that range from Riemannian and sub-Riemannian geometry (see, for instance, the review \cite{Ag15}) to the geometry of conformal structures  \cite{Ba22}.   

The Jacobi curves considered in this paper are those such that the velocity $\dot{\Gamma}(t)$, interpreted as a symmetric bilinear form on $\Gamma(t)$,  is definite. 
This generic property is preserved under changes of the parameter and the range $[\Gamma]$ of the curve $\Gamma (t)$ will be called an (unparametrized) admissible Jacobi curve.   The conformal symplectic group $CSp$ of linear maps that preserve the symplectic form up to a multiplicative constant, acts on the set of admissible curves.
The main idea developed in this work consists in extending  Cartan's geometry of curves on Euclidean space to analyse Jacobi curves.  For that a new definition of Agrachev and Zelenko's \cite{Ag02} Ricci curvature tensor of a Jacobi curve will be introduced.  Such curvature tensor is obtained after a thorough analysis of the notion of the derivative curve associated to a Jacobi curve, that relies on the local affine structure of the Lagrangian Grassmannian, and the appropriate construction of a geometric arc parameter.   It will be shown that there is a natural geometric parametrization for each Jacobi curve that will allow the construction and definition of the derivative curve and of the Ricci curvature tensor of a Jacobi curve.    

The notion of conformal symplectic curvature invariants of Jacobi curves will be introduced and, using the previously developed tools, a theory of symplectic moving frames will be discussed.    This will allow us to construct a complete, independent and free from integrability conditions family of $(n-1)(n+2)/2$, with $n$ the dimension of the Lagrangian subspaces, conformal symplectic curvature invariants for a given Jacobi curve.  As a consequence any other curvature depends functionally on this family, in particular the higher differential order curvatures obtained in \cite{Ag02,Ag02b}. To conclude the theoretical analysis, a reconstruction theorem will be proved that will show that any admissible Jacobi curve is characterised, up to conformal symplectic transformations, by a reduced normal Cartan matrix  and a geometric arc parameter.   In the particular instance of four-dimensional symplectic spaces, a complete system of conformal invariant curvatures consisting of two independent curvatures, extending the results in \cite{Mu14}, will be obtained.  The reduced normal Cartan matrix of a Jacobi curve is algorithmically computable in the sense that it can be computed using a symbolic manipulation language.  Finally, the general theory of cycles, a special class of Jacobi curves with vanishing curvatures, will be analysed further.

The paper will be organised as follows.   Section \ref{sec:elements} will be devoted to review some basic conformal symplectic notions and notations for the convenience of the reader, most importantly the local affine structure on the Lagrangian Grassmannian of a given symplectic space, structure that will play a relevant role in what follows.  In Sect. \ref{sec:Jacobi} the notion of Jacobi curves, the problem of their classification and the role played by conformal curvatures will be stated.  Section \ref{sec:Ricci} will be devoted to the construction of the Ricci curvature tensor of a Jacobi curve. Such notion will be introduced by the hand of a natural curve associated to any regular Jacobi curve, called its derivative curve.     In Sect. \ref{sec:Cartan} the Cartan geometry of Jacobi curves will be described.  Section \ref{sec:reconstruction} will be devoted to prove the reconstruction and classification theorem for Jacobi curves and, finally, Sect. \ref{sec:cycles} will discuss the theory of cycles, that is, flat Jacobi curves.


\section{Elements of the geometry of the Lagrangian Grassmannian}\label{sec:elements}

As it was indicated in the introduction, the conformal symplectic group is the natural invariance group when dealing with the geometry of Lagrangian subspaces. Thus any attempt of classification of families of Lagrangian subspaces, in particular curves of them, should be invariant with respect to the natural action of such group.   This section will be devoted to establish the basic facts and notations concerning symplectic spaces, the conformal and symplectic groups and the geometry of the collection of all Lagrangian subspaces of a given symplectic space, its Lagrangian Grassmannian.

\subsection{Symplectic preliminaries: conformal symplectic structures}\label{sec:preliminaries}
We will discuss first the emergence of the conformal symplectic group as the natural invariance group of the theory of Lagrangian subspaces and its geometrical structure.

\subsubsection{The symplectic and conformal symplectic groups.}\label{sec:symplectic_group}
Let $(W, \omega)$ be a linear symplectic space, that is, $W$ is a real linear space and $\omega$, called a symplectic form, is a non-degenerate skew-symmetric bilinear form on $W$.  For convenience, we will denote by $(u|v)$ the value of the  symplectic form $\omega$ on the pair of vectors $u,v\in W$, that is, $(u|v) := \omega (u,v)$.    A Lagrangian subspace $\Lambda \subset W$ is a maximal isotropic subspace with respect to the symplectic form $\omega$, that is, a subspace such that $\Lambda = \Lambda^\perp$,  where $\Lambda^\perp$  denotes the subspace of all vectors orthogonal to $\Lambda \subset W$ with respect to the symplectic form $\omega$,  $\Lambda^\perp = \{ u \in W \mid (u| v) = 0, \forall v \in \Lambda\}$.  

The main object of study of this work are curves $\Gamma (t)$ of Lagrangian subspaces, i.e., $\Gamma(t)$ is a Lagrangian subspace of $W$ for each $t$ in some open interval $I \subset \mathbb{R}$ satisfying regularity properties that will be discussed later on (see \S \ref{sec:jacobi_curves}, Def. \ref{def:jacobi}).   
We will denote by $ \mathscr{L}(W)$ the set of all Lagrangian subspaces of $W$ and it will be called the Lagrangian Grassmannian of $W$.  The Lagrangian Grassmannian $\mathscr{L}(W)$ is a closed submanifold of dimension  $n(n+1)/2$  of the Grassmannian manifold $\mathrm{Gr}_n(W)$ consisting of all $n$-dimensional subspaces of the linear space $W$,   $\dim W = 2n$. 

A fundamental observation is that if $\Lambda \subset W$ is a Lagrangian subspace with respect to the symplectic form $\omega$, it is also a Lagrangian subspace for the symplectic form $\lambda \omega$, where $\lambda \neq 0$.  In other words, the Lagrangian Grassmannian $ \mathscr{L}(W)$ is associated to the family $[\omega] = \{ \lambda \omega | \lambda \neq 0 \}$ of symplectic structures on $W$.  We will call such a family $[\omega]$ of (proportional) symplectic structures, a conformal symplectic structure on $W$.  In this sense a proper notation for the Lagrangian Grassmannian would be $ \mathscr{L}(W, [\omega])$, even if we will keep the previous notation for short.

An invertible linear map $c \colon W \to W$ that maps Lagrangian subspaces into Lagrangian subspaces will be called a conformal symplectic map.  A conformal symplectic map $c$ induces a diffeomorphism (denoted with the same symbol) $c \colon  \mathscr{L}(W) \to \mathscr{L}(W)$, $c: \Lambda \mapsto c(\Lambda)$.   Notice that if $c$ is a conformal symplectic map it must transform the conformal class $[\omega]$ into itself, that is, choosing a representative symplectic form $\omega \in [\omega]$, we get $c^* \omega = \lambda_c \omega$, with $\lambda_c \neq 0$, that is: 
\begin{equation}\label{eq:conformal}
(c(u) \mid c(v) ) = \lambda_c (u \mid v) \, , \qquad  \forall u,v\in W \, ,
\end{equation}  
which justifies the given name.   We will denote by $CSp(W,[\omega])$ the Lie group of linear conformal symplectic maps for the conformal structure $[\omega]$. In what follows we will denote the Lie group  $CSp(W,[\omega])$ as $CSp(W)$, or just $CSp$, if there is no risk of confusion, and will be called the conformal symplectic group of the symplectic space $W$.  The Lie algebra $\mathfrak{csp}$ of the Lie group $CSp$ consists of linear maps $A\colon W \to W$ such that there exists $\mu \in \mathbb{R}$ verifying:
\begin{equation}\label{eq:conf_Lie_algebra}
(Au | v) + (u| Av) = \mu (u|v)  \, , \qquad \forall u,v\in W \, .
\end{equation}

It is clear that the map $\lambda \colon CSp \to \mathbb{R}^\times$, $\lambda (c) = \lambda_c$, \textit{cfr.} (\ref{eq:conformal}), associated to the choice of a representative $\omega$ in the conformal symplectic structure of $W$, is a group epimorphism onto the multiplicative group of real numbers, whose kernel is the closed normal subgroup consisting of all linear maps $a \colon W \to W$, such that:
$$
(a(u)|a(v)) = (u|v) \, , \qquad \forall u,v\in W \,  ,
$$ 
called the symplectic group of the symplectic space $W$ and denoted by $Sp(W,\omega)$ (or just $Sp(W)$, or $Sp$, if there is no risk of confusion).     The Lie algebra of the Lie group $Sp$ will be denoted by $\mathfrak{sp}$ and its elements can be identified with skew-linear maps with respect to the bilinear form $(\cdot | \cdot )$, that is,  a linear map $A\colon W \to W$ determines an element in $\mathfrak{sp}$ provided that:
\begin{equation}\label{eq:lie_algebra}
(Au | v) + (u| Av) = 0 \, , \qquad \forall u,v\in W \, .
\end{equation}
From (\ref{eq:lie_algebra}) if follows that the symplectic group $Sp$ has dimension $2n^2 + n$, provided that $\dim W = 2n$, hence $\dim CSp = 2n^2 + n + 1$. 

Given two transversal Lagrangian subspaces $\Lambda$, $\bar{\Lambda} \subset W$, i.e., $\Lambda \cap \bar{\Lambda} = \{ \mathbf{0}\}$, then $W = \Lambda \oplus \bar{\Lambda}$.   We will call such decomposition of $W$ a Lagrangian decomposition.    Choosing a Lagrangian decomposition $W = \Lambda \oplus \bar{\Lambda}$, we can define a cross section of  $\sigma \colon \mathbb{R}^\times \to CSp$, of the short exact sequence $1 \to Sp \to CSp \stackrel{\lambda}{\to} \mathbb{R}^\times \to 1$, given by\footnote{Note that $w(v_1\oplus \bar{v}_1, v_2\oplus\bar{v}_2) = \omega (v_1, \bar{v}_2) + \omega(\bar{v_1}, v_2) = (v_1 \mid \bar{v}_2) +   (\bar{v}_1 \mid v_2)$. Then, $\omega (\sigma(s)(v_1\oplus \bar{v}_1), \sigma(s)(v_2\oplus\bar{v}_2)) = \omega (s v_1\oplus\bar{v}_1, s v_2\oplus\bar{v}_2) = (sv_1\mid   \bar{v}_2) +   (\bar{v}_1 \mid s v_2) = s \omega (v_1\oplus \bar{v}_1, v_2\oplus\bar{v}_2)$, and $\sigma(s) \in CSp$.}: 
\begin{equation}\label{eq:sigma}
\sigma (s) (v \oplus \bar{v}) =  s v \oplus \bar{v} \, , \qquad  \forall v\in \Lambda, \bar{v} \in \bar{\Lambda} \, .
\end{equation}    
Moreover, it is  obvious that the map $\sigma$ is a group homomorphism and $CSp$ becomes the semi-direct product of $Sp$ by $\mathbb{R}^\times$, $CSp = Sp \rtimes \mathbb{R}^\times$. Thus we have the identification $Sp(W) \times \mathbb{R}^\times \cong CSp(W)$, given by $(a,s) \in Sp(W) \times \mathbb{R}^\times \mapsto a \sigma (s) \in CSp (W)$.   We will just write $c = a \sigma (\lambda_c)$, where $a$ is the element of the symplectic group defined as $c \sigma (\lambda_c)^{-1}$.   Note that $\sigma (s) (\Lambda) = \Lambda$.

The definition of the conformal symplectic group makes explicit the natural transitive action of $CSp(W)$ on $\mathscr{L}(W)$ given by $(c, \Lambda) \mapsto c(\Lambda)$, $c \in CSp(W)$ and $\Lambda \in \mathscr{L}(W)$,  that makes it an homogeneous space.  This action restricts to an action of the symplectic group $Sp(W)$ whose orbits coincide.  Indeed, if  $\Lambda$ is a Lagrangian subspace, consider the orbit $\mathcal{O}_\Lambda := CSp (W) \Lambda = \{ c(\Lambda) \mid c \in CSp (W)\}$.   Choose a Lagrangian decomposition of $W$ of the form $W = \Lambda \oplus \bar{\Lambda}$, then using the identification between $CSp$ and $Sp \times \mathbb{R}^\times$,  provided by the cross section $\sigma$ associated to this Lagrangian decomposition, \textit{cfr.} (\ref{eq:sigma}), we get $c (\Lambda) = a \sigma (\lambda_c) (\Lambda) = a (\Lambda)$ and we conclude that the orbit of $\Lambda$ under the conformal symplectic group $CSp$ coincides as sets with the orbit under the symplectic group $Sp$ (although this identification is not canonical and depends on the choice of a cross section $\sigma$).   

 The action of the conformal symplectic group on the Lagrangian Grassmannian is obviously transitive, hence the action of the symplectic group,  which allows to identify $\mathscr{L}(W)$ with the manifold of cosets $\mathscr{L}(W) \cong CSp(W)/H \cong Sp(W)/H_0$, with $H_0 \subset Sp(W)$ the closed subgroup of symplectic transformations fixing the Lagrangian subspace $\Lambda$, $H_0 = \{ a \in Sp(W) \mid a(\Lambda) = \Lambda \}$, and $H = H_0 \rtimes \mathbb{R}^\times \subset CSp$, the corresponding semi-direct product extension subgroup of $CSp$. 
Choosing a metric structure on $W$ compatible with the symplectic form $\omega$, endows $W$ with a K\"ahler structure.    Given a Lagrangian subspace $\Lambda_0$, it is possible to choose the K\"ahler structure in such a way that $\Lambda_0$ becomes a real subspace, then the orbit of the symplectic group becomes the orbit of the unitary group of the K\"ahler structure, which is isomorphic to $U(n)$, and the isotropy group of such action is clearly the orthogonal group of the metric real space $\Lambda_0$, which is isomorphic to $O(n)$, hence the homogeneous space $ \mathscr{L}(W)$ is diffeomorphic to $U(n)/O(n)$ that provides an alternative description of the smooth structure of the Lagrangian Grassmannian.

\subsubsection{Lagrangian subspaces and symplectic basis.} \label{sec:basis} 
It will be helpful to introduce local parametrizations of the Lagrangian Grassmannian in various proofs and computations to follow.  The following paragraphs will be devoted to describe in detail the natural affine atlas of the Lagrangian Grassmannian and a few useful formulas and notations.

Let $\boldsymbol{\epsilon} = (\mathbf{e}, \bar{\mathbf{e}})$ denote a symplectic basis of $W$, that is, $\mathbf{e} = (e_1, \ldots , e_n)$, $\bar{\mathbf{e}} = (\bar{e}_1, \ldots, \bar{e}_n)$, are two systems of $n$ linearly independent vectors in $W$ such that $(e_i|e_j) = (\bar{e}_i | \bar{e}_j) = 0$, $(e_i | \bar{e}_j ) = \delta_{ij}$, for all $i,j = 1, \ldots, n$.  Then we denote by $\Lambda$ ($\bar{\Lambda}$), the Lagrangian subspace generated by $\mathbf{e} = (e_1, \ldots, e_n)$ ($\bar{\mathbf{e}} = (\bar{e}_1, \ldots, \bar{e}_n)$, respec.).   Note that $W = \Lambda \oplus \bar{\Lambda}$.  Conversely, if $\Lambda \in  \mathscr{L}(W)$ is a Lagrangian subspace and $\bar{\Lambda}$ is a Lagrangian subspace transverse to $\Lambda$, that is, $\Lambda \cap \bar{\Lambda} = \mathbf{0}$, then there is a symplectic basis $(\mathbf{e}, \bar{\mathbf{e}})$ such that the system of vectors $\mathbf{e}$ generates $\Lambda$ and $\bar{\mathbf{e}}$ generates $\bar{\Lambda}$.  In such case, we will say that the symplectic basis $(\mathbf{e}, \bar{\mathbf{e}})$  is adapted to the Lagrangian decomposition  $W = \Lambda \oplus \bar{\Lambda}$.  In fact, any one of the systems of vectors, $\mathbf{e} \subset \Lambda$ , $\bar{\mathbf{e}} \subset \overline{\Lambda}$, determines the other (Lemma \ref{lema-base-simplectica} below makes this statement precise).
By the same token we can consider conformal symplectic basis, that is systems of vectors $\boldsymbol{\epsilon} = (\mathbf{e}, \bar{\mathbf{e}})$ such that $(e_i|e_j) = (\bar{e}_i | \bar{e}_j) = 0$, $(e_i | \bar{e}_j ) = \lambda \delta_{ij}$, for all $i,j = 1, \ldots, n$, and $\lambda \neq 0$ called the scaling factor of the basis.   If  $\boldsymbol{\epsilon} = (\mathbf{e}, \bar{\mathbf{e}})$ is a conformal symplectic basis with scaling factor $\lambda$, then $\boldsymbol{\epsilon} = (\mathbf{e}, \bar{\mathbf{e}})$ is a symplectic basis for the symplectic form $\frac{1}{\lambda} \omega \in [\omega]$.  Moreover, given a symplectic basis $\boldsymbol{\epsilon} = (\mathbf{e}, \bar{\mathbf{e}})$, the scaled basis $\boldsymbol{\epsilon}_\lambda = (\mathbf{e}_\lambda, \bar{\mathbf{e}}_\lambda)$, with $\mathbf{e}_\lambda = f(\lambda) \mathbf{e}$, $\bar{\mathbf{e}}_\lambda = g(\lambda)  \bar{\mathbf{e}}$, and $f(\lambda ) g(\lambda) = \lambda$, are new conformal symplectic basis describing the same Lagrangian decomposition of $\Lambda \oplus \overline{\Lambda}$ of $W$.   Thus, when dealing with Lagrangian subspaces we are free to consider either symplectic or conformal symplectic basis to describe them.

Given a Lagrangian subspace $\Lambda\in  \mathscr{L}(W)$ we denote by $\Lambda^\pitchfork$ the open set of Lagrangian subspaces transverse to $\Lambda$, that is:
\begin{equation}\label{eq:transverse}
\Lambda^\pitchfork = \{ \Gamma \in  \mathscr{L}(W) \mid \Gamma \cap \Lambda = \mathbf{0} \} \, .
\end{equation}
The sets $\Lambda^\pitchfork$ can be used to construct a smooth atlas for $\Lambda\in  \mathscr{L}(W)$.   Indeed, consider a Lagrangian decomposition $W = \Lambda \oplus \overline{\Lambda}$, then any Lagrangian subspace $\Gamma$ in $\overline{\Lambda}^\pitchfork$ defines a linear map from $\Lambda$ to $\overline{\Lambda}$  denoted as $\langle \Lambda , \Gamma,  \overline{\Lambda} \rangle$, as  (see Fig. \ref{fig:projectors}): 
\begin{equation}\label{eq:Lambda}
\langle \Lambda , \Gamma,  \overline{\Lambda} \rangle \colon \Lambda \to \overline{\Lambda} \, , \quad \langle \Lambda , \Gamma,  \overline{\Lambda} \rangle (v) = \bar{v}\, , \quad v+\bar{v} \in \Gamma \, , v \in \Lambda\, , \bar{v} \in \overline{\Lambda} \, .
\end{equation}    
If we choose a symplectic basis $(\mathbf{e},\bar{\mathbf{e}})$ adapted to the decomposition $W = \Lambda \oplus \overline{\Lambda}$, then the matrix $S$ associated to the linear map $\langle \Lambda , \Gamma,  \overline{\Lambda} \rangle$, that is, $\langle \Lambda , \Gamma,  \overline{\Lambda} \rangle (e_i) = \sum_j S_{ji} \bar{e}_j$, is symmetric  and the map the assigns to any $n\times n$ symmetric matrix $S$, the Lagrangian subspace $\Gamma_S$ given by:
\begin{equation}\label{eq:eq_Lag}
\Gamma_S = \left\{ (\mathbf{e},\bar{\mathbf{e}}) \left(\begin{array}{c} x \\ Sx \end{array} \right) \mid x \in \mathbb{R}^n \right\} =: \left[\begin{array}{c} I_n \\ S \end{array} \right]_{(\mathbf{e}, \mathbf{\bar{e}})}\, ,
\end{equation}
provides a local chart for $ \mathscr{L}(W)$ on the open set $\overline{\Lambda}^\pitchfork$\footnote{A similar map can be defined using the open set $\Lambda^\pitchfork$ instead, see below this section.}.   
Note that the definition of the subspace $\Gamma_S$ amounts to say that $\bar{x} = S x$ are the equations of $\Gamma_S$ and that the system of vectors $\mathbf{e} + \overline{\mathbf{e}} S$ form a basis of $\Gamma$.
As it will be discussed in the coming section, \S \ref{sec:affine}, such parametrisation is the natural coordinate expression of the canonical affine structure on $\overline{\Lambda}^\pitchfork$.  The initial discussion on the existence of adapted symplectic basis is completed by the following statement.

\begin{lemma}\label{lema-base-simplectica}
Let $\Lambda$ and $\overline{\Lambda}$ be transverse Lagrangian subspaces in $\left(W,\omega\right)$, i.e., $\overline{\Lambda}\cap \Lambda = \{ \mathbf{0}\}$.  Then if $\mathbf{f}=\left( \mathbf{f}_1,\ldots ,\mathbf{f}_m \right)$ is a basis of $\Lambda$, then there is a unique basis $\overline{\mathbf{f}}=\left(\overline{\mathbf{f}}_1,\ldots,\overline{\mathbf{f}}_m \right)$ of $\overline{\Lambda}$ such that $\left(\mathbf{f},\overline{\mathbf{f}}\right)$ is a symplectic basis of $\left(W,\omega\right)$.
\end{lemma}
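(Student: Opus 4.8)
The plan is to exploit the fact that, for a pair of transverse Lagrangian subspaces, the symplectic form restricts to a perfect pairing between them, so that $\overline{\Lambda}$ is canonically identified with the dual of $\Lambda$; the required basis $\overline{\mathbf{f}}$ is then simply the dual basis of $\mathbf{f}$ with respect to this pairing.

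First I would establish that the bilinear map $\Lambda \times \overline{\Lambda} \to \mathbb{R}$, $(v, \bar{w}) \mapsto (v \mid \bar{w})$, is non-degenerate. If $v \in \Lambda$ satisfies $(v \mid \bar{w}) = 0$ for all $\bar{w} \in \overline{\Lambda}$, then, since $\Lambda$ is isotropic, we also have $(v \mid w) = 0$ for all $w \in \Lambda$; because $W = \Lambda \oplus \overline{\Lambda}$ this means $v$ is $\omega$-orthogonal to all of $W$, and non-degeneracy of $\omega$ forces $v = \mathbf{0}$. The symmetric argument, using that $\overline{\Lambda}$ is isotropic, shows the pairing is non-degenerate in the second argument as well. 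Hence it induces isomorphisms $\Lambda \cong \overline{\Lambda}^{*}$ and $\overline{\Lambda} \cong \Lambda^{*}$; in particular $\dim \overline{\Lambda} = \dim \Lambda = m$.

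Next, given the basis $\mathbf{f} = (\mathbf{f}_1, \ldots, \mathbf{f}_m)$ of $\Lambda$, I would define $\overline{\mathbf{f}}_j \in \overline{\Lambda}$, for $j = 1, \ldots, m$, as the vectors determined by the conditions $(\mathbf{f}_i \mid \overline{\mathbf{f}}_j) = \delta_{ij}$; these exist and are unique precisely because the pairing just described is perfect, so they form the basis of $\overline{\Lambda} \cong \Lambda^{*}$ dual to $\mathbf{f}$. In particular $\overline{\mathbf{f}} = (\overline{\mathbf{f}}_1, \ldots, \overline{\mathbf{f}}_m)$ is linearly independent, hence a basis of $\overline{\Lambda}$. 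It then remains only to verify the symplectic relations: $(\mathbf{f}_i \mid \mathbf{f}_j) = 0$ and $(\overline{\mathbf{f}}_i \mid \overline{\mathbf{f}}_j) = 0$ hold because $\Lambda$ and $\overline{\Lambda}$ are isotropic, respectively, while $(\mathbf{f}_i \mid \overline{\mathbf{f}}_j) = \delta_{ij}$ holds by construction; thus $(\mathbf{f}, \overline{\mathbf{f}})$ is a symplectic basis of $(W, \omega)$.

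Finally, for uniqueness, suppose $\overline{\mathbf{f}}' = (\overline{\mathbf{f}}'_1, \ldots, \overline{\mathbf{f}}'_m)$ is another basis of $\overline{\Lambda}$ making $(\mathbf{f}, \overline{\mathbf{f}}')$ a symplectic basis. Then each $\overline{\mathbf{f}}'_j$ lies in $\overline{\Lambda}$ and satisfies $(\mathbf{f}_i \mid \overline{\mathbf{f}}'_j) = \delta_{ij}$ for all $i$; by non-degeneracy of the pairing this characterises $\overline{\mathbf{f}}'_j$ uniquely, so $\overline{\mathbf{f}}'_j = \overline{\mathbf{f}}_j$ for every $j$. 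There is no genuine obstacle in the argument: the one point that needs a (brief) justification is the non-degeneracy of the restricted pairing, and once that is in place the statement reduces to the existence and uniqueness of a dual basis.
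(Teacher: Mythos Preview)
Your argument is correct and coincides with the abstract proof the paper sketches in its opening lines (the identification $\overline{\Lambda}\cong W/\Lambda\cong\Lambda^*$ and the use of the dual basis). The only difference is that the paper then supplements this with an explicit coordinate computation yielding the formula $\overline{M}=(\overline{S}-S)^{-1}(M^T)^{-1}$, which it needs later for the Cartan matrix; your proof establishes the lemma as stated but does not supply that formula.
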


\begin{proof}   The abstract proof works as follows:  given the basis $\mathbf{f}= (f_1, \ldots, f_n)$ of $\Lambda$, it determines a unique dual basis $\mathbf{f}^* = (f_1^*, \ldots, f_n^*)$ of $\Lambda^*$, $f_i^*(f_j) = \delta_{ij}$. Then, the natural identifications $\overline{\Lambda} \cong W /\Lambda \cong \Lambda^*$, determines the basis $\overline{\mathbf{f}}$ we are looking for.     In spite of this, we will work out the explicit formulas for the vectors in $\overline{\mathbf{f}}$ because they will turn out to be useful later on.
Consider an arbitrary symplectic basis $(\mathbf{e}, \overline{\mathbf{e}})$ such that both $\Lambda$ and $\overline{\Lambda}$ are transverse to the Lagrangian subspace generated by the vectors $\overline{\mathbf{e}}$, then, using the notation introduced above, \textit{cfr.} (\ref{eq:eq_Lag}), we can write:
\[
\Lambda \simeq \left[ \begin{matrix} I_n \\ S \end{matrix} \right]_{(\mathbf{e}, \overline{\mathbf{e}})}\in\mathscr{L}\left(W\right)  \quad \text{ and } \quad \overline{\Lambda} \simeq \left[ \begin{matrix} I_n \\ \overline{S} \end{matrix} \right]_{(\mathbf{e}, \overline{\mathbf{e}})} \in\mathscr{L}\left(W\right)  \,  .
\]
where $S$ and $\overline{S}$ are symmetric matrices. For $v\in \Lambda$ and $\overline{v}\in \overline{\Lambda}$ we can write 
\[
\left\{\begin{matrix} v=\mathbf{e}x+\overline{\mathbf{e}}Sx \\ \overline{v}=\mathbf{e}\overline{x}+\overline{\mathbf{e}}\overline{S}\overline{x} \end{matrix} \right.
\] 
where $x$ and $\overline{x}$ are coordinates of $v$ and $\overline{v}$ with respect to the basis $\mathbf{e} + \overline{\mathbf{e}} S$ and $\mathbf{e} + \overline{\mathbf{e}} \bar{S}$ respectively.  So, we have, Eq. (\ref{eq:coor_sym}):
\begin{align*}
\omega\left(v,\overline{v}\right) & = \omega\left(\mathbf{e}x+\overline{\mathbf{e}}Sx,\mathbf{e}\overline{x}+\overline{\mathbf{e}} \overline{S}\overline{x}\right) = \omega\left(\mathbf{e}x,\overline{\mathbf{e}} \overline{S}\overline{x}\right) + \omega\left(\overline{\mathbf{e}}Sx,\mathbf{e}\overline{x}\right) = \\
& = x^{T}\overline{S}\overline{x} - (Sx)^{T}\overline{x} =  x^{T}\overline{S}\overline{x} - x^{T}S^{T}\overline{x} = x^{T}\left(\overline{S}-S\right)\overline{x}
\end{align*}
If $\mathbf{f}\subset \Lambda$ and $\overline{\mathbf{f}}\subset \overline{\Lambda}$ are basis, then
\begin{equation}\label{eq:sym_basis}
\left\{\begin{matrix} \mathbf{f}=\mathbf{e}~M+\overline{\mathbf{e}}~S~M \\ \overline{\mathbf{f}}=\mathbf{e}~\overline{M}+\overline{\mathbf{e}}~\overline{S}~\overline{M} \end{matrix} \right.
\end{equation}
where the columns of $M$ and $\overline{M}$ correspond to the coordinates of the vectors $\mathbf{f}$ and $\overline{\mathbf{f}}$ respectively. Hence, $\left(\mathbf{f},\overline{\mathbf{f}}\right)$ is a symplectic basis of $\left(W,\omega\right)$ whenever:
\[
M^{T}(\overline{S}-S) \overline{M} = I_n \, .
\]
Since $\overline{\Lambda}\in \Lambda^{\pitchfork}$, then $\overline{S}-S$ is a non--singular matrix and because $M$ is also non--singular, then we have that 
\begin{equation}\label{eq-base-simplectica}
\overline{M} = (\overline{S}-S)^{-1}\left(M^{T}\right)^{-1}
\end{equation}
is the matrix defining the basis $\overline{\mathbf{f}}$ such that $\left(\mathbf{f},\overline{\mathbf{f}}\right)$ is a symplectic basis of $W$.
\end{proof}

Choosing a symplectic basis $(\mathbf{e}, \bar{\mathbf{e}})$, the elements in the Lie algebra $\mathfrak{sp}$ are represented by matrices $A$ such that $A^T J + J A = 0$, with $J$ the matrix representation of the symplectic structure itself, that is the matrix whose non-zero elements are $J_{i,j +n} = \delta_{ij} = - J_{i+n,j}$, $i,j= 1,  \ldots, n$.  Then, $A$ has the form:
$$
A = \left( \begin{array}{cc} M & N \\ R & -M^T \end{array} \right) \, , \qquad N^T = N \, , \quad R^T = R \, ,
$$
with $M,N,R$ being $n\times n$ matrices, that shows that $\dim \mathfrak{sp} = 2n^2 + n$, which is the dimension of the symplectic group $Sp$, \textit{cfr.} Sect.  \ref{sec:symplectic_group}.

There is a natural isomorphism $S \colon \mathfrak{sp} \to \mathcal{S}(W)$, where $\mathcal{S}(W)$ denotes the linear space of symmetric bilinear forms on $W$.  Given $A \in \mathfrak{sp}$, we define, recall Eq. (\ref{eq:lie_algebra}):
$$
S_A(u,v) = ( Au | v) = (Av| u) \, .
$$
Thus, given a Lagrangian subspace $\Lambda \in  \mathscr{L}(W)$, $\mathfrak{sp}(\Lambda)$ will denote the subspace of all elements $A$ in $\mathfrak{sp}$ whose range lies in $\Lambda$, that is $\mathfrak{sp}(\Lambda) = \{ A \in \mathfrak{sp} \mid  A(W) \subset \Lambda \}$, or, equivalently:
\begin{equation}\label{eq:spLambda}
\mathfrak{sp}(\Lambda) = \{ A \in \mathfrak{sp} \mid  S_A ( \Lambda, \Lambda) = 0 \} \, .
\end{equation}
In other words, $\mathfrak{sp}(\Lambda) $ is isomorphic to the space of symmetric bilinear forms on $W$ vanishing on $\Lambda$, that is, $\mathcal{S}(W/\Lambda)$. But, $W/\Lambda$ is canonically isomorphic to $\Lambda^*$, the dual space to $\Lambda$, thus we conclude that $\mathfrak{sp}(\Lambda)$ is canonically isomorphic to $\mathcal{S}(\Lambda^*)$.


\subsection{The local affine structure of the Lagrangian Grassmannian}\label{sec:affine}

\subsubsection{The affine structure of $\Lambda^\pitchfork$}
Given a Lagrangian subspace $\Lambda \in  \mathscr{L}(W)$, the set $\Lambda^\pitchfork$, \textit{cfr.} (\ref{eq:transverse}),  carries a canonical affine structure. The following paragraphs will be devoted to describe it and to introduce some useful notations.

Given $\Gamma \in \Lambda^\pitchfork$ consider the canonical projector onto $\Lambda$ determined by the decomposition $W = \Lambda \oplus \Gamma$.  Using the convenient notation introduced in \cite{Ag02}, we will denote such projector, called a Lagrangian projector, by:
$$
\langle W, \Gamma, \Lambda \rangle \colon W \to \Lambda \subset W\, , \qquad \langle W, \Gamma, \Lambda \rangle(w) = w_\Lambda
$$   
where $w = w_\Lambda + w_\Gamma$, is the unique decomposition of the vector $w \in W$ on its $\Lambda$ and $\Gamma$ components, $w_\Lambda \in \Lambda$, $w_\Gamma \in \Gamma$.  Note that if $\Gamma \in \Lambda^\pitchfork$, then $\Lambda \in \Gamma^\pitchfork$, and we can define also the Lagrangian projector $\langle W,\Lambda, \Gamma \rangle$ with range $\Gamma$ along $\Lambda$, then:
$$
w = w_\Lambda + w_\Gamma = \langle W, \Gamma, \Lambda \rangle (w) + \langle W, \Lambda, \Gamma \rangle (w) \, , \quad \forall w \in W\,.
$$
There is a one-to-one correspondence between $\Lambda^\pitchfork$ and the set of Lagrangian projectors $\mathcal{P}_\Lambda$ associated to the Lagrangian subspace $\Lambda$, that is the set\footnote{Note that if $P$ is a Lagrangian projector, then $(Pu | v) = (u_\Lambda | v_\Gamma)$, and we get, $(Pu|v) + (u|Pv) = (u_\Lambda | v_\Gamma )  + (u_\Gamma | v_\Lambda )= (u|v)$, that implies that $P$ belongs to the Lie algebra of the conformal symplectic group, \textit{cfr.} (\ref{eq:conf_Lie_algebra}).}:
$$
\mathcal{P}_\Lambda = \{ P \in \mathfrak{gl}(W) \mid P^2 = P, P(W) = \Lambda, (Pu|v) + (u|Pv) = (u |v), \,  \forall u,v \in W \} \, ,
$$
The correspondence is such that, to any Lagrangian subspace $\Gamma \in \Lambda^\pitchfork$, we associate the Lagrangian projector $\langle W, \Gamma, \Lambda \rangle$ and, to any Lagrangian projector $P \in \mathcal{P}_\Lambda$, we associate the Lagrangian subspace $\Gamma = (I-P)(W)$.

\begin{lemma} 
The set  $\mathcal{P}_\Lambda$ is an affine space over the linear space $\mathfrak{sp}(\Lambda)$, \textit{cfr.} Eq. (\ref{eq:spLambda}). 
\end{lemma}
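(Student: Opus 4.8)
The plan is to exhibit $\mathfrak{sp}(\Lambda)$, regarded as an abelian group under addition, acting freely and transitively on $\mathcal{P}_\Lambda$ via $(A,P)\mapsto P+A$, the sum being taken inside $\mathfrak{gl}(W)$. Since both $\mathcal{P}_\Lambda$ and $\mathfrak{sp}(\Lambda)$ are subsets of the linear space $\mathfrak{gl}(W)$, it suffices to check the two closure statements: (a) if $P,P'\in\mathcal{P}_\Lambda$ then $P'-P\in\mathfrak{sp}(\Lambda)$, and (b) if $P\in\mathcal{P}_\Lambda$ and $A\in\mathfrak{sp}(\Lambda)$ then $P+A\in\mathcal{P}_\Lambda$. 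Freeness and transitivity are then automatic: the only element of $\mathfrak{gl}(W)$, hence of $\mathfrak{sp}(\Lambda)$, carrying $P$ to $P'$ is $P'-P$. Nonemptiness of $\mathcal{P}_\Lambda$ — needed for it to qualify as an affine space — follows by choosing any Lagrangian decomposition $W=\Lambda\oplus\overline{\Lambda}$ (e.g.\ from an adapted symplectic basis, cf.\ \S\ref{sec:basis}) and noting, as in the footnote to the definition of $\mathcal{P}_\Lambda$, that the Lagrangian projector $\langle W,\overline{\Lambda},\Lambda\rangle$ satisfies $P^2=P$, $P(W)=\Lambda$, and $(Pu|v)+(u|Pv)=(u|v)$.

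For (a), put $A=P'-P$. Subtracting the two instances of the identity $(Pu|v)+(u|Pv)=(u|v)$ gives $(Au|v)+(u|Av)=(u|v)-(u|v)=0$, so $A\in\mathfrak{sp}$ by (\ref{eq:lie_algebra}). Moreover $Aw=P'w-Pw\in\Lambda$ for every $w\in W$, since $P(W)=P'(W)=\Lambda$; hence $A(W)\subset\Lambda$, i.e.\ $A\in\mathfrak{sp}(\Lambda)$.

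For (b), which is the only step requiring genuine thought, the key observation — and the one place I expect a brief argument rather than pure formalism — is that every $A\in\mathfrak{sp}(\Lambda)$ vanishes identically on $\Lambda$. Indeed, for $v\in\Lambda$ and arbitrary $w\in W$ one has $(Av|w)=-(v|Aw)=0$, because $Aw\in\Lambda$ and $\Lambda$ is isotropic; non-degeneracy of $\omega$ forces $Av=0$. Granting $A|_\Lambda=0$: since $A(W)\subset\Lambda$ and $P$ restricts to the identity on its range $\Lambda$, we get $PA=A$, while $AP$ and $A^2$ both have range inside $A(\Lambda)=\{0\}$, so $AP=A^2=0$. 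Therefore $(P+A)^2=P^2+PA+AP+A^2=P+A$; also $(P+A)(W)\subseteq\Lambda$ and $(P+A)(\Lambda)=P(\Lambda)+A(\Lambda)=\Lambda$, so $(P+A)(W)=\Lambda$; and adding the conformal identities for $P$ and for $A\in\mathfrak{sp}$ yields $((P+A)u|v)+(u|(P+A)v)=(u|v)+0=(u|v)$. Hence $P+A\in\mathcal{P}_\Lambda$, which completes the verification and proves the lemma.

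It is worth remarking that the whole statement can alternatively be read off the affine chart (\ref{eq:eq_Lag}): in a symplectic basis adapted to $W=\Lambda\oplus\overline{\Lambda}$ the elements of $\mathcal{P}_\Lambda$ are precisely the block matrices $\left(\begin{smallmatrix} I_n & -T\\ 0 & 0\end{smallmatrix}\right)$ with $T=T^{T}$, while $\mathfrak{sp}(\Lambda)$ consists of the $\left(\begin{smallmatrix} 0 & N\\ 0 & 0\end{smallmatrix}\right)$ with $N=N^{T}$; thus $\mathcal{P}_\Lambda$ is literally a translate of the linear subspace $\mathfrak{sp}(\Lambda)$ inside $\mathfrak{gl}(W)$, and this coordinate picture also makes transparent the compatibility of the affine structure on $\mathcal{P}_\Lambda$ with that of $\Lambda^\pitchfork$ described in \S\ref{sec:affine}. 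I would, however, keep the intrinsic argument above as the proof, since it is basis-free and isolates cleanly the single nontrivial input $A|_\Lambda=0$.
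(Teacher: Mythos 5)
Your proof is correct and follows essentially the same route as the paper: the core step is the verification that $Q=P+A$ is again a Lagrangian projector, using exactly the paper's observations that $A$ kills $\Lambda$ (so $AP=A^{2}=0$, $PA=A$) and that $A\in\mathfrak{sp}$ contributes nothing to the conformal identity. You are in fact slightly more thorough than the paper, which only checks closure under translation, whereas you also record that $P'-P\in\mathfrak{sp}(\Lambda)$ and that $\mathcal{P}_\Lambda\neq\emptyset$; these points the paper leaves implicit (they are used just afterwards in the identification of $\Lambda^{\pitchfork}$ with $\mathcal{P}_\Lambda$), so your additions are welcome but do not constitute a different method.
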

 \begin{proof}
  Indeed, given $P \in \mathcal{P}_\Lambda$ and $A\in \mathfrak{sp}(\Lambda)$, we observe that $Q = P + A$ is a Lagrangian projector.    Certainly, $Q(u) = P(u) + A(u)  \in \Lambda$.  On the other hand $Q^2 = (P+A)^2 = P+A = Q$, because $A^2 (u) = 0$ (note that $(Au|v) = (Av|u)$, thus if $Au, Av \in \Lambda$, then $(A^2u|v) = 0$).  Moreover $AP(u) = 0$, and $PA(u) = A(u)$.   Finally, $(Qu|v) + (u| Qv) = (Pu|v) + (u|Pv ) = (u|v)$, then $Q \in \mathcal{P}_\Lambda$.  
  \end{proof}

The natural identification of $\Lambda^\pitchfork$ with the space of Lagrangian projectors $\mathcal{P}_\Lambda$, introduces an affine structure on $\Lambda^\pitchfork$ with underlying linear space $\mathfrak{sp}(\Lambda)$, in other words, given two Lagrangian spaces $\overline{\Lambda}$ and $\Gamma$ in $\Lambda^\pitchfork$, the corresponding vector $\overrightarrow{\overline{\Lambda}\Gamma}$ will be given by the element in $\mathfrak{sp}(\Lambda)$:  $\overrightarrow{\overline{\Lambda}\Gamma} =  \langle W, \Gamma, \Lambda \rangle -  \langle W, \bar{\Lambda}, \Lambda \rangle$.  By definition, the value of any element belonging to $\mathfrak{sp}(\Lambda)$ in $\Lambda$ is zero, \textit{cfr.} (\ref{eq:spLambda}), then, we can introduce the convenient notation $\langle \bar{\Lambda}, \Gamma, \Lambda \rangle$ for the vector $\overrightarrow{\overline{\Lambda}\Gamma}$ that takes into account this fact.   Now $\langle \bar{\Lambda}, \Gamma, \Lambda \rangle$ is a linear map $\langle \bar{\Lambda}, \Gamma, \Lambda \rangle \colon \bar{\Lambda} \to \Lambda$, such that $(Au \mid v ) + (u \mid Av) = 0$ (compare with Eq. (\ref{eq:Lambda})).   In other words, given the Lagrangian subspace $\bar{\Lambda}\in \Lambda^\pitchfork$, using it as the origin in the affine space $\Lambda^\pitchfork$, we can identify the linear space $\mathfrak{sp}(\Lambda)$ with the space (see Fig. \ref{fig:projectors}):
\begin{equation}\label{eq:vector_space}
\mathcal{S}(\bar{\Lambda}, \Lambda) = \{ A \colon \bar{\Lambda} \to \Lambda \mid (A(u) \mid v ) + (u \mid A(v)) =0\} \, .
\end{equation}

\begin{figure}
  \centering
    \includegraphics[scale=0.5]{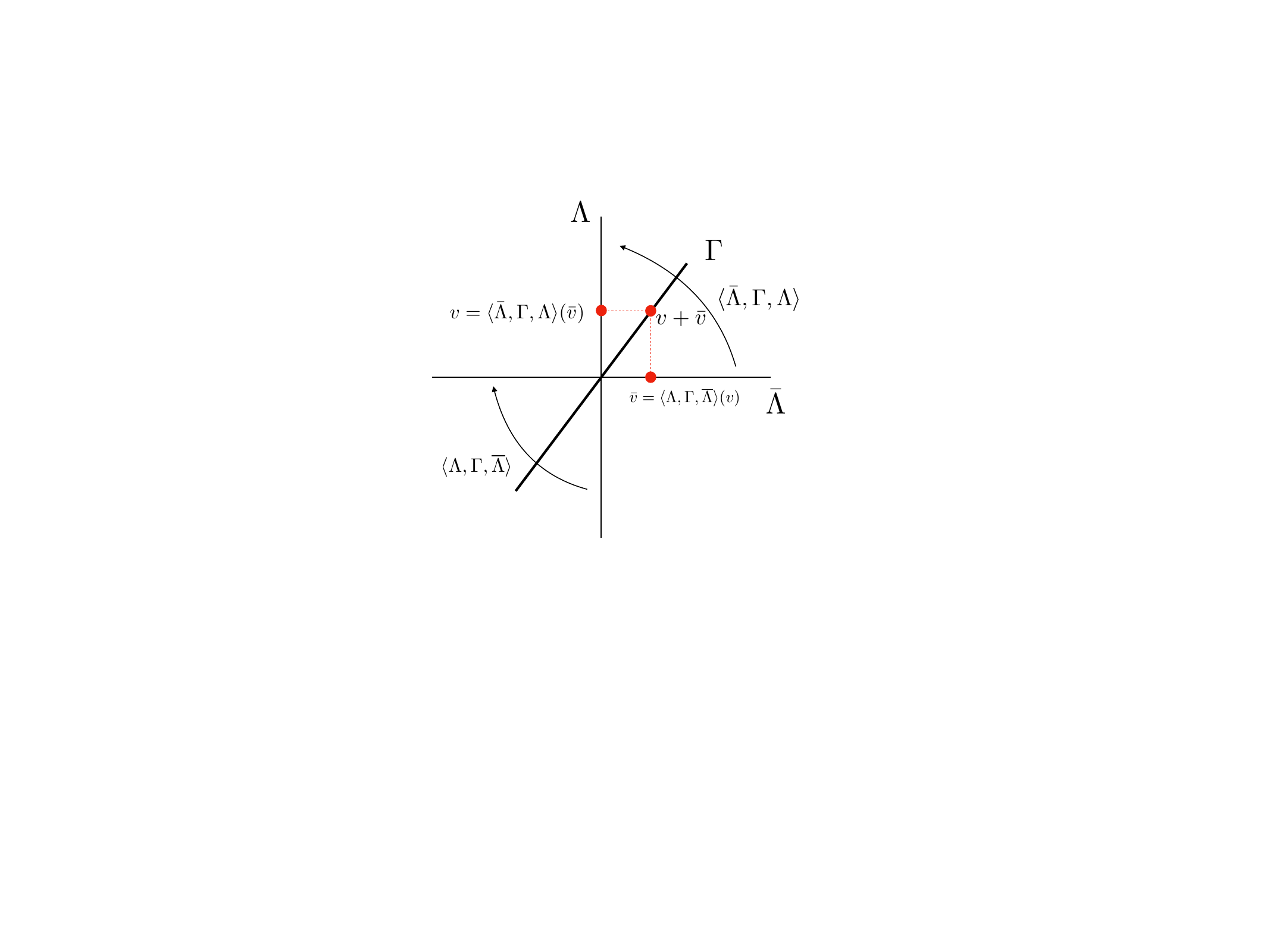}
  \caption{\small{A diagram representing the vectors $\langle \bar{\Lambda}, \Gamma, \Lambda \rangle$ (in the affine space $\Lambda^\pitchfork$ with origin at $\bar{\Lambda}$), and $\langle \Lambda, \Gamma, \bar{\Lambda} \rangle$ (in the affine space $\overline{\Lambda}^\pitchfork$ with origin at $\Lambda$).} \label{fig:projectors}}
\end{figure} 

Exchanging the roles of $\Lambda$ and $\bar{\Lambda}$, if $\Gamma_1, \Gamma_2$ are two Lagrangian subspaces in $\bar{\Lambda}^\pitchfork$, then the vector defined by them with respect to the affine structure on $\bar{\Lambda}^\pitchfork$, will be given by:
$$
\overrightarrow{\Gamma_1\Gamma_2} = \langle \Lambda, \Gamma_2 , \bar{\Lambda} \rangle - \langle \Lambda, \Gamma_1 , \bar{\Lambda} \rangle \, ,
$$
where now, $\Lambda$ plays the role of origin, the associated linear space is $\mathfrak{sp}(\bar{\Lambda})$ which is identified with $\mathcal{S}(\Lambda, \bar{\Lambda})$. 

\subsubsection{Coordinate description of the local affine structure.}\label{sec:coordinates}
It is convenient to emphasise the coordinate description of the previous constructions in terms of the atlas described in \S \ref{sec:preliminaries}, Eq. (\ref{eq:eq_Lag}).   We will introduce as before a symplectic basis $(\mathbf{e}, \bar{\mathbf{e}})$ with corresponding symplectic coordinates $(x,\bar{x})$, and the Lagrangian subspaces $\Lambda = \langle e_1, \ldots, e_n \rangle$, and $\bar{\Lambda} = \langle \bar{e}_1, \ldots, \bar{e}_n \rangle$.   Thinking of the vector $x \in \mathbb{R}^n$ as a column vector, the symplectic form $\omega$ takes the coordinate expression:
\begin{equation}\label{eq:coor_sym}
((x_1, \bar{x}_1) \mid (x_2, \bar{x}_2) ) = x_1^T \bar{x}_2 - x_2 \bar{x}_1^T \, .
\end{equation}
In particular, $((x, 0) \mid (0, \bar{x}) ) = x^T \bar{x}$.   Note that in the identification $\Lambda^* \cong \bar{\Lambda}$, the basis $\bar{\mathbf{e}}$ gets identified with the opposite dual basis of $\mathbf{e}$, that is:  $(\bar{e}_j | \,\cdot\, ) = - x^j$ and $(e_j | \,\cdot\, ) = \bar{x}^j$.  Moreover the space of symmetric bilinear forms $\mathcal{S}(\bar{\Lambda})$ is identified with the space of symmetric matrices  $\mathcal{S}(\mathbb{R}^n) = \{ \bar{S} = (\bar{S}_{ij}) \mid \bar{S}^T = \bar{S}\}$.

Then, the affine space $\Lambda^\pitchfork$ is identified (using as origin $\bar{\Lambda}$) with the vector space $\mathcal{S}(\mathbb{R}^n)$ by means of:
$$
\bar{S} = (\bar{S}_{ij}) \in \mathcal{S}(\mathbb{R}^n) \mapsto \Gamma_{\bar{S}} = \{ (\bar{S}\bar{x}, \bar{x} ) \mid \bar{x} \in \mathbb{R}^n \} \in \Lambda^\pitchfork \, ,
$$
and the map $S_{\Lambda^\pitchfork}^{\bar{\Lambda}} \colon \Gamma \in \Lambda^\pitchfork  \mapsto \langle \bar{\Lambda}, \Gamma, \Lambda \rangle \in \mathcal{S}(\bar{\Lambda}) \cong \mathcal{S}(\mathbb{R}^n)$, constitutes an affine coordinate system for $\Lambda^\pitchfork$ (see Fig. \ref{fig:projectors}).    Similarly, the affine space $\bar{\Lambda}^\pitchfork$, can be identified (using as origin $\Lambda$) with the vector space $\mathcal{S}(\mathbb{R}^n)$ again, by means of (compare with (\ref{eq:eq_Lag})):
\begin{equation}\label{eq:S_coordinate}
S = (S_{ij}) \in \mathcal{S}(\mathbb{R}^n) \mapsto \Gamma_S = \{ (x, Sx) \mid x \in \mathbb{R}^n \} \in \bar{\Lambda}^\pitchfork \, ,
\end{equation}
where $S$ is interpreted as $\langle \Lambda, \Gamma_S, \bar{\Lambda} \rangle$.  The map $S_{\bar{\Lambda}^\pitchfork}^\Lambda \colon \Gamma \in \bar{\Lambda}^\pitchfork  \mapsto \langle \Lambda, \Gamma, \bar{\Lambda} \rangle \in \mathcal{S}(\Lambda) \cong \mathcal{S}(\mathbb{R}^n)$, $S_{\bar{\Lambda}^\pitchfork}^\Lambda \cong (S_{ij} \mid i \leq j)$, constitutes an affine coordinate system for $\bar{\Lambda}^\pitchfork$.


\section{Jacobi curves and their classification}\label{sec:Jacobi}

\subsection{Jacobi curves}\label{sec:jacobi_curves}

A curve in $ \mathscr{L}(W)$ is a smooth map $\Gamma$ from an open interval $I \subset \mathbb{R}$ in  $ \mathscr{L}(W)$.  We will denote the curve $\Gamma$ as $\Gamma = \Gamma (t)$.   The tangent vector to the curve $\Gamma$ at $\Gamma(t)$ will be denoted as $\Gamma'(t) \in T_{\Gamma(t)}  \mathscr{L}(W)$.

In coordinates $S = (S_{ij}) \in \mathcal{S}(\mathbb{R}^n)$ introduced above, \textit{cfr}. \ref{sec:coordinates}, Eq. (\ref{eq:S_coordinate}), we can write a Jacobi curve as:
$$
\Gamma (t) = \left[ \begin{array}{c} I_n \\ S_t \end{array} \right]_{(\mathbf{e}, \bar{\mathbf{e}})} = \left\{ (\mathbf{e}, \bar{\mathbf{e}}) \cdot \left(\begin{array}{c} x \\ S_tx \end{array} \right) \mid x \in \mathbb{R}^n \right\} \, .
$$
where as indicated before, \textit{cfr.} (\ref{eq:eq_Lag}), the notation  $\left[ \begin{array}{c} I_n \\ S_t \end{array} \right]_{(\mathbf{e}, \bar{\mathbf{e}})} $ describes the Lagrangian subspace spanned by the vectors $e_i + \sum_{j= 1}^n(S_t)_{ji}\bar{e}_j$, $i = 1,\ldots, n$.

Let us consider a Lagrangian subspace $\Lambda_0\in \mathscr{L}\left(W\right)$ and denote as before by $\mathcal{S}\left(\Lambda_0\right)$ the vector space of symmetric bilinear forms defined in $\Lambda_0$.    There is a natural characterization of the tangent space $T_{\Lambda_0}( \mathscr{L}(W))$ as the space of quadratic forms on $\Lambda_0$.

\begin{proposition}\label{prop-bilineal-tangente}
Let $\Gamma=\Gamma(t)$ be a curve in $\mathscr{L}\left(W\right)$ such that $\Gamma(0)=\Lambda_0$. We define the quadratic form $\dot{\Gamma}(0):\Lambda_0 \rightarrow \mathbb{R}$ by
\[
\dot{\Gamma}(0) \left(v_0\right) = \omega\left(v_0,v'(0)\right)
\]
where $v=v(t)\in\Gamma(t)$ is a curve such that $v(0)=v_0$ and $\omega$ the symplectic 2--form in $W$. Then the map 
\[
\begin{tabular}{rcl}
$T_{\Lambda_0}\mathscr{L}(W)$ & $\rightarrow$ & $\mathcal{S}\left(\Lambda_0\right)$ \\
$\Gamma'(0)$ & $\mapsto$ & $\dot{\Gamma}(0)$
\end{tabular}
\]
is a linear isomorphism.
\end{proposition}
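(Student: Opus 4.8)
The plan is to reduce the whole statement to the affine chart of \S\ref{sec:coordinates} and then recognise the asserted map as a composition of evident linear isomorphisms, the one genuinely delicate point being the well-definedness of $\dot\Gamma(0)$, i.e. its independence of the chosen extension $v(t)$; I would dispatch that first.

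First I would fix a symplectic basis $(\mathbf{e},\bar{\mathbf{e}})$ with $\bar\Lambda := \langle \bar e_1,\ldots,\bar e_n\rangle$ transverse to $\Lambda_0$, so that a neighbourhood of $\Lambda_0$ in $\mathscr{L}(W)$ is the chart $\bar\Lambda^\pitchfork \cong \mathcal{S}(\mathbb{R}^n)$, $\Gamma_S = \{(x,Sx)\mid x\in\mathbb{R}^n\}$, cf. (\ref{eq:S_coordinate}), and $\Lambda_0 = \Gamma_{S_0}$. Writing $\Gamma(t)=\Gamma_{S_t}$ with $S_t$ symmetric, the tangent vector $\Gamma'(0)$ is identified with $\dot S_0\in\mathcal{S}(\mathbb{R}^n)$ (symmetric, being a limit of symmetric matrices). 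Any curve $v(t)\in\Gamma(t)$ with $v(0)=v_0$ has the form $v(t)=(\mathbf{e},\bar{\mathbf{e}})\cdot(x(t),S_t x(t))$ with $x(0)=x_0$ the coordinate vector of $v_0$, so $v'(0)=(\mathbf{e},\bar{\mathbf{e}})\cdot(x'(0),\dot S_0 x_0 + S_0 x'(0))$, and the coordinate expression (\ref{eq:coor_sym}) of $\omega$ gives
\[
\omega(v_0,v'(0)) = x_0^T\dot S_0 x_0 + x_0^T S_0 x'(0) - x'(0)^T S_0 x_0 = x_0^T \dot S_0 x_0 ,
\]
the last two terms cancelling because $S_0=S_0^T$ (they are a scalar and its transpose). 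This one identity does most of the work: the term in $x'(0)$ drops out, so $\dot\Gamma(0)(v_0)$ is independent of the extension $v(t)$; the right-hand side depends on the curve only through $\Gamma'(0)\leftrightarrow\dot S_0$; and $v_0\mapsto x_0^T\dot S_0 x_0$ is the quadratic form on $\Lambda_0\cong\mathbb{R}^n$ whose symmetric Gram matrix is $\dot S_0$, so it is a bona fide element of $\mathcal{S}(\Lambda_0)$.

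It then remains to assemble the isomorphism. Read in the chart, the map $\Gamma'(0)\mapsto\dot\Gamma(0)$ is the composite of the linear chart identifications $T_{\Lambda_0}\mathscr{L}(W)\cong\mathcal{S}(\mathbb{R}^n)$ and $\mathcal{S}(\Lambda_0)\cong\mathcal{S}(\mathbb{R}^n)$ with the map $\dot S_0\mapsto(x_0\mapsto x_0^T\dot S_0 x_0)$, which is a linear bijection from symmetric matrices to quadratic forms; hence the composite is a linear isomorphism. Equivalently, and more intrinsically, one can (i) obtain well-definedness and symmetry by differentiating $\omega(v(t),w(t))=0$ — valid because $\Gamma(t)$ is Lagrangian — at $t=0$ to get $\omega(v_0,w'(0))=\omega(w_0,v'(0))$ for arbitrary extensions of any $v_0,w_0\in\Lambda_0$; (ii) note linearity in $\Gamma'(0)$ is immediate from bilinearity of $\omega$ together with the fact that extensions may be added and scaled; (iii) check injectivity, since $\dot\Gamma(0)=0$ forces $\dot S_0=0$ in the chart, hence $\Gamma'(0)=0$; (iv) conclude by the dimension count $\dim T_{\Lambda_0}\mathscr{L}(W) = n(n+1)/2 = \dim\mathcal{S}(\Lambda_0)$, so an injective linear map between them is onto.

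The main obstacle, such as it is, is precisely step (i): making sure $\dot\Gamma(0)$ is well defined and symmetric, since a priori the formula involves the \emph{velocity} $v'(0)$ of an arbitrarily chosen lift. The coordinate computation makes this transparent (the offending term $x_0^T S_0 x'(0) - x'(0)^T S_0 x_0$ vanishes), and intrinsically it is the remark that if $u(t)\in\Gamma(t)$ with $u(0)=0$, then $0=\tfrac{d}{dt}\big|_{0}\,\omega(v(t),u(t)) = \omega(v_0,u'(0))$, so replacing one extension of $v_0$ by another changes nothing. Once this is in place, everything else is routine bookkeeping.
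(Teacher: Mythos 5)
Your proposal is correct and follows essentially the same route as the paper: both identify $\dot\Gamma(0)$ in the affine chart with the quadratic form $x_0\mapsto x_0^T S_0' x_0$ and conclude that the map is the standard bijection between symmetric matrices and quadratic forms. The only organizational difference is that the paper first proves independence of the lift intrinsically (differentiating $\omega(v(t),v(t)-u(t))=0$) and then computes with the constant-coordinate lift, whereas you get well-definedness directly from the general-lift coordinate computation (and note the intrinsic argument as an alternative), which is a harmless streamlining.
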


\begin{proof}
First, we will show that the quadratic form $\dot{\Gamma}(0)$ does not depend on the chosen curve $v=v(t)$. If $u=u(t)\in\Gamma(t)$ is another curve such that $u(0)=v_0$, then $v(t)-u(t)\in \Gamma(t)$ for all $t$. Since $\Gamma(t)\in \mathscr{L}(W)$ is Lagrangian, then  $\left(v(t) \mid v(t)-u(t)\right)=0$.   Deriving at $t=0$, we obtain:  $\left(v'(0) \mid v(0)-u(0)\right) + \left(v(0) \mid v'(0)-u'(0)\right)=0
\Rightarrow~  \left(v(0) \mid v'(0)-u'(0)\right)=0$, but then
$\dot{\Gamma}(0) \left(v_0\right) = \left(v(0) \mid u'(0)\right)$,
that shows that $\dot{\Gamma}(0)$ does not depend on the chosen $v$.

Now, consider a coordinate system $S = (S_{ij})$ of $\mathscr{L}(W)$ at $\Lambda_0$ such that $\Gamma(t)=\left\{ \left(x,S_t x\right) \mid  x\in \mathbb{R}^{m} \right\}$,  where, $S_t =\left(S_{ij}(t)\right)\in \mathbb{R}^{n \times n}$.
Since $\Lambda_0 = \left\{ \left(x,S_0 x\right): x\in \mathbb{R}^{m} \right\}$, then there exists a unique $x_0\in\mathbb{R}^m$ such that $v_0 = \left(x_0,S_0 x_0\right)$. We can consider $v$ as the curve defined by: $v(t)=\left(x_0,S_t x_0\right)$,
so $v'(t)= \left(0,S'_t x_0\right)$, and then:
\[
\dot{\Gamma}(0) \left(v_0\right) = \left(v_0 \mid v'(0)\right) = \left( \left(x_0,S_t x_0\right) \mid \left(0,S'_t x_0\right) \right) =  x_0^T S'_0x_0
\]
which only depends on the tangent vector $\Gamma'(0)\in T_{\Lambda_0}\mathscr{L}(W)$, that in local coordinates has the expression:
\[
\Gamma'(0)=\sum_{i\leq j}S'_{ij}(0)\left( \frac{\partial}{\partial S_{ij}} \right)_{\Lambda_0}
\] 
and the coordinates $S'_{ij}(0)$ coincide with the components of the matrix $S'_0$ defining $\dot{\Gamma}(0)$. Therefore the map $T_{\Lambda_0}\mathscr{L}(W)\rightarrow \mathcal{S}\left(\Lambda_0\right)$ is a linear isomorphism.
\end{proof}

\begin{remark}\label{rem:Sprime}
The previous proposition is also true for any value of $t$.   Note that $S'_\tau = (S'_{ij}(\tau))$ is the matrix of $\dot{\Gamma}(\tau)$ in the basis of $\Gamma(\tau)$:  $\boldsymbol{\epsilon}(\tau) = \mathbf{e} + \overline{\mathbf{e}} S_\tau$, that provides the system of coordinates $x \in \mathbb{R}^n \mapsto (x, S_\tau x) \in \Gamma(\tau)$.  As an immediate consequence we get that $\dot{\Gamma}(\tau)$ is positive definite iff the matrix $S'(\tau)$ is positive definite.
\end{remark}

Given a curve $\Gamma (t)$ in the Lagrangian Grassmannian, in what follows we will identify $\Gamma'(t)$ with the quadratic form $\dot{\Gamma}(t)$.   We will assume also the following transversality conditions in increasing order of restrictiveness:
\begin{enumerate}
\item $\Gamma (t)$ is non-singular, that is $\dot{\Gamma} (t) \neq 0$, for all $t$.
\item $\Gamma (t)$ is regular if $\dot{\Gamma}(t)$ is a non-degenerate quadratic form on each Lagrangian subspace $\Gamma(t)$.
\item $\Gamma (t)$ is monotonous if $\dot{\Gamma}(t)$ is definite (positive or negative).
\end{enumerate}

In what follows we will just consider regular curves in the Lagrangian Grassmannian and, unless there is risk of confusion, we will call them Jacobi curves.

\begin{definition}\label{def:jacobi}
A regular Jacobi curve $\Gamma$ (or just a Jacobi curve for short) on the (conformal) symplectic space $W$ is a smooth regular curve in the Lagrangian Grassmannian $\mathscr{L}(W)$.
\end{definition}


\subsection{The classification of Jacobi curves: curvatures}

As it was discussed in Sect. \ref{sec:symplectic_group} there is a natural transitive action of the conformal symplectic group $CSp$ in the Lagrangian Grassmannian $(a,\Lambda) \mapsto a\Lambda$, for all $a \in CSp$ and $\Lambda \in  \mathscr{L}(W)$.   The same action induces an action on the set of Jacobi curves, that is if $\Gamma = \Gamma (t)$ is a Jacobi curve then $\tilde{\Gamma} = a\Gamma = a (\Gamma (t))$ is also a Jacobi curve.  Notice that the action of $CSp$ on the Lagrangian Grassmaniann is the same as the action of the symplectic group $Sp$.   More generally will say that the Jacobi curves $\tilde{\Gamma}$ and $\Gamma$ are $CSp$-equivalents if there is a change of parameter $t = \varphi (\tilde{t})$ and an element $a \in CSp$, such that $a \Gamma (t) = a \Gamma (\varphi (\tilde{t})) = \tilde{\Gamma}(\tilde{t})$.   This implies that the map $[\Gamma] \to [\tilde{\Gamma}]$ between the graphs of the curves $\Gamma$ and $\tilde{\Gamma}$, given by $\Gamma(t) \mapsto \tilde{\Gamma}(\varphi^{-1}(t))$ can be obtained by restriction of the conformal symplectic map $a$. 

The problem of classification of Jacobi curves consists of finding an algorithmic criterion that allows to decide when two Jacobi curves are $CSp$-equivalent (or just `equivalent' if there is no risk of confusion).  

\paragraph{Parametric curvatures.} Curvatures could provide the criterion we are looking for.   A parametric curvature is a function $\kappa$ that assigns to any Jacobi curve $\Gamma = \Gamma (t)$, a differentiable function $\kappa_\Gamma = \kappa_\Gamma (t)$ such that $\kappa_{a\Gamma} = \kappa_\Gamma$, for all $a \in CSp$ and $\kappa_\Gamma (t) = \kappa_\Gamma (t + t_0)$, for all $t,t_0$ such that $t+t_0$ belongs to the domain of $\Gamma$.  We will say that the parametric curvature $\kappa$ is of order $\leq r$, if $\kappa_\Gamma$ is a differentiable function of the derivatives of order $\leq r$ of $\Gamma$.   We will say that $\kappa$ is of order $r$ if it is of order $\leq r$ and it is not of order $\leq r -1$.

\paragraph{Absolute curvatures} Given a parametric curvature $\kappa$, then for a given Jacobi curve $\Gamma$, the function $\kappa_\Gamma (t)$ will change in a specific manner under changes of parameter $t = \varphi (\tilde{t})$.   In general $\kappa_{\tilde{\Gamma}}(\tilde{t})$ will not coincide with $\kappa_\Gamma (\varphi(\tilde{t}))$.   When the parametric curvature $\kappa$ is consistent with respect to changes of parameters we will say that $\kappa$ is an absolute curvature.

\begin{definition}
Let $\kappa$ be a parametric curvature for the class of Jacobi curves.  We will say that $\kappa$ is an absolute curvature if $\kappa_{\tilde{\Gamma}}(\tilde{t}) = \kappa_\Gamma (\varphi(\tilde{t}))$ for all changes of parameters $t = \varphi (\tilde{t})$.
\end{definition}

If $\kappa$ is an absolute curvature, then for each Jacobi curve $\Gamma$ it induces a real function $\kappa_{[\Gamma]} \colon [\Gamma] \to \mathbb{R}$, on the graph of $\Gamma$, given by $\kappa_{[\Gamma]} (\Lambda) = \kappa_\Gamma (t)$, where $\Lambda = \Gamma (t)$.   Finally the invariance of parametric curvatures with respect to the action of the conformal symplectic group implies that the map $\kappa_{[\Gamma]}$ is $CSp$-invariant, that is $\kappa_{[\Gamma]} (\Lambda) =  \kappa_{[a\Gamma]} (a\Lambda)$, $a \in CSp$.   The absolute curvature $\kappa$ will be called just a curvature (unless there is risk of confusion).

\paragraph{Complete system of curvatures}  A family $\{ \kappa^1, \ldots, \kappa^r\}$ of absolute curvatures will be called a complete system of curvatures for the class of Jacobi curves if for any pair of Jacobi curves $\Gamma,\tilde{\Gamma} \colon I \to \mathscr{L}(W)$, such that $\kappa_\Gamma^i = \kappa_{\tilde{\Gamma}}^i$, for all $i = 1, \ldots , r$, then $\Gamma$ and $\tilde{\Gamma}$ are equivalent.   Note that this implies that any diffeomorphism $\alpha \colon [\Gamma] \to [{\tilde{\Gamma}}]$ that preserves the curvatures $\kappa^i$ is the restriction of a conformal symplectic transformation $a \in CSp$.

We will say that a system of curvatures  $\{ \kappa^1, \ldots, \kappa^r\}$ is independent if the functions $\kappa^i$ are functionally independent.   Then, a complete family of independent curvatures will provide a classification of Jacobi curves.    We will devote the remaining of this article to construct one such family of curvatures, thus solving the problem of classification of Jacobi curves.   It is relevant to point out that the construction to be described is algorithmic.


\subsection{Geometric arc parameters}\label{sec:geometric_arc}

\begin{definition}\label{def:geometric_arc}
A geometric arc parameter is a map $ds_\bullet$ that assigns to each Jacobi curve $\Gamma$ an arc element $ds_\Gamma = \zeta_\Gamma (t) dt$ (that is, a 1-form along the curve $\Gamma = \Gamma (t)$), invariant under the action of the conformal symplectic group $CSp$, i.e., $ds_\Gamma = ds_{a\Gamma}$, for all $a \in CSp$, and if $t = \varphi(\bar{t})$ is a change of parameter, and $\bar{\Gamma} = \Gamma (\varphi (\bar{t}))$ is the reparametrised Jacobi curve, then $\varphi^* ds_\Gamma = ds_{\overline{\Gamma}}$, namely:
$$
\zeta_{\bar{\Gamma}}(\bar{t}) = \zeta_\Gamma (\varphi (\bar{t})) \frac{d\varphi}{d\bar{t}} \, .
$$
\end{definition}

Note that if $ds_\bullet$ is a geometric arc parameter, then the length $L(\Gamma)$ of the curve does not depend on the parameter we use to parametrise it, namely:
$$
L(\Gamma) = \int_a^b \zeta_\Gamma (t) dt = \int_{\bar{a}}^{\bar{b}} \zeta_{\bar{\Gamma}}(\bar{t}) d\bar{t} = L(\bar{\Gamma}) \, .
$$
and, in addition, $L (\Gamma) = L (a \Gamma) $, for all $a \in CSp$.


\section{The Ricci curvature tensor of a Jacobi curve: the derivative curve}\label{sec:Ricci}

\subsection{The derivative curve and its symmetry} 

The construction and definition of the Ricci curvature tensor will rely on the notion of the derivative curve of a Jacobi curve.  We will present here a construction different from the one that appears in \cite{Ag02} that is, we believe, perhaps simpler and more natural.  We will illustrate it first in the bidimensional case, where the Lagrangian Grassmannian is just the projective line.


\subsubsection{The derivative curve in the bidimensional case}

If $\dim W = 2$, then for every $w \in W \backslash \{ \mathbf{0}\}$, we get $(w | w) = 0$, hence the vector line $[w] \in \mathbb{P}(W)$ defines a Lagrangian subspace and $ \mathscr{L}(W) = \mathbb{P}(W)$.   A symplectic basis is just a pair of vectors $e,\bar{e}$ such that $(e \mid \bar{e} ) = 1$.  Symplectic coordinates $(x,\bar{x})$ with respect to such basis allow us to identify $ \mathscr{L}(W)$ with $\mathbb{RP}^1$ by considering the Lagrangian subspaces $\Lambda = [e] = (1:0)$, and $\bar{\Lambda} = [\bar{e}] = (0:1)$.  Under these circumstances the affine line structure for $\bar{\Lambda}^\pitchfork \cong \mathbb{RP}^1 \backslash \bar{\Lambda}$, is given by the Cartesian coordinate $S \in \mathbb{R} \mapsto (1:S) \in \bar{\Lambda}^\pitchfork$, with origin in $\Lambda$.   Thus, a Jacobi curve $\Gamma(t)$ in $\mathbb{RP}^1 \backslash \bar{\Lambda}$ is given in projective coordinates by $(1:S_t)$, with $S_t' \neq 0$ for all $t$.   Thus, we get:

\begin{proposition}\label{prop:derivative2}
Given a fixed value of the parameter $\tau$, there exists a unique point $\Delta_\tau \in  \mathscr{L}(W)$, $\Delta_\tau \neq \Gamma (t)$, for any $t$,  such that the curve $\Gamma (t)$ in the affine space $\Delta_\tau^\pitchfork$, denoted as $\Gamma_{\Delta_\tau}$, satisfies $\Gamma''_{\Delta_\tau} (\tau)= 0$.  More specifically, if $S_\tau'' \neq 0$, then:
\begin{equation}\label{eq:Atau1}
\Delta_\tau = \left(S_\tau'': S_\tau S_\tau'' - 2(S_\tau')^2\right) \, ,
\end{equation}
and, obviously, if $S_\tau'' = 0$, we get $\Delta_\tau = \bar{\Lambda} = (0:1)$.
\end{proposition}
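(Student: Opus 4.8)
The statement is essentially a claim about the affine structure of $\mathbb{RP}^1$ and the classical projective notion of an osculating point. The plan is to work in the projective coordinates $(1:S_t)$ fixed in the preamble, translate the condition ``$\Gamma''_{\Delta_\tau}(\tau)=0$ in the affine chart $\Delta_\tau^\pitchfork$'' into an algebraic condition on a candidate point $\Delta_\tau = (a:b) \in \mathbb{RP}^1$, and then solve for $(a:b)$, checking that the solution is unique (as a projective point) and that it never lies on the curve.

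\medskip

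\noindent\textbf{Step 1: the change-of-chart formula.} First I would write down explicitly how the affine coordinate on $\bar\Lambda^\pitchfork$ (with origin $\Lambda$) relates to the affine coordinate on $\Delta^\pitchfork$ for a general Lagrangian $\Delta = (a:b)$ with $\Delta \neq \bar\Lambda$. Since every $1$-dimensional subspace of a $2$-dimensional symplectic space is Lagrangian, $\mathscr{L}(W) = \mathbb{RP}^1$ and the transition maps between the affine charts coming from different base points are the usual Möbius maps. Concretely, if $\Gamma(t) = (1:S_t)$, then in the chart centred at $\Delta = (a:b)$ the curve has coordinate
\[
\sigma_t \;=\; \frac{S_t - b/a}{\,1 - \text{(something)}\,}\, ,
\]
or more invariantly $\sigma_t$ is the cross-ratio-type expression measuring $\Gamma(t)$ against $\Lambda$ relative to $\Delta$; I would pin down the exact fractional-linear form $\sigma_t = (\alpha S_t + \beta)/(\gamma S_t + \delta)$ where the matrix $\begin{pmatrix}\alpha & \beta \\ \gamma & \delta\end{pmatrix}$ is determined, up to scale, by requiring it to send $\Delta \mapsto \infty$ and (for definiteness of normalisation) $\Lambda \mapsto 0$. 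This is the one genuinely computational input, but it is short.

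\medskip

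\noindent\textbf{Step 2: impose the vanishing of the second derivative.} With $\sigma_t = (\alpha S_t + \beta)/(\gamma S_t + \delta)$ and $\Delta = (\gamma : {-\alpha})$ say (so that $\gamma S + \delta$... — I will fix signs when I write the transition map), I would differentiate twice in $t$. Using $\dot\sigma = \dfrac{(\alpha\delta - \beta\gamma)\,\dot S}{(\gamma S + \delta)^2}$ one gets, after differentiating again,
\[
\ddot\sigma \;=\; (\alpha\delta - \beta\gamma)\left(\frac{\ddot S}{(\gamma S+\delta)^2} \;-\; \frac{2\gamma\,\dot S^{\,2}}{(\gamma S + \delta)^3}\right).
\]
Setting $\ddot\sigma(\tau) = 0$ (and using $\alpha\delta - \beta\gamma \neq 0$, $\gamma S_\tau + \delta \neq 0$ since $\Delta \neq \Gamma(\tau)$) yields the single linear relation
\[
\ddot S_\tau\,(\gamma S_\tau + \delta) \;=\; 2\gamma\,\dot S_\tau^{\,2}\, ,
\]
i.e. $\gamma\bigl(S_\tau \ddot S_\tau - 2\dot S_\tau^{\,2}\bigr) = -\,\delta\,\ddot S_\tau$. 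This determines the ratio $\gamma:\delta$ uniquely (they cannot both vanish, since $(\gamma:\delta)$ is a genuine projective point — the value $\Delta$ in the appropriate chart), hence determines $\Delta_\tau$ uniquely as a point of $\mathbb{RP}^1$, and unwinding the identification of $(\gamma:\delta)$ with the point $\Delta_\tau$ produces exactly $\Delta_\tau = (S_\tau'' : S_\tau S_\tau'' - 2(S_\tau')^2)$, together with the degenerate case $\Delta_\tau = (0:1) = \bar\Lambda$ when $S_\tau'' = 0$.

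\medskip

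\noindent\textbf{Step 3: $\Delta_\tau$ is never on the curve.} Finally I would check $\Delta_\tau \neq \Gamma(t)$ for every $t$. Suppose $\Gamma(t_0) = \Delta_\tau$ for some $t_0$. In the chart $\Delta_\tau^\pitchfork$ this means $\sigma_{t_0}$ is the point at infinity, i.e. $\Gamma$ leaves the affine chart $\Delta_\tau^\pitchfork$ — but the regularity hypothesis is that $\dot\Gamma(t)$ is nondegenerate, equivalently (Remark~\ref{rem:Sprime}) $S'(t) \neq 0$ for all $t$, which forces $\gamma S_{t_0} + \delta = 0$; combining this with the defining relation $\gamma(S_\tau S_\tau'' - 2 (S_\tau')^2) = -\delta S_\tau''$ and re-examining the identity at $t=\tau$ gives a contradiction with $S_\tau' \neq 0$. (Alternatively, and more cleanly: a point and its osculating behaviour are disjoint — the curve meets $\Delta_\tau^\pitchfork$ on a neighbourhood of $\tau$ where it has an inflection, and the only way for $\Gamma$ to hit $\Delta_\tau$ would be to exit this chart, which the nonvanishing of $\dot\Gamma$ near such a crossing rules out.)

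\medskip

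\noindent\textbf{Main obstacle.} The only delicate point is bookkeeping: getting the transition Möbius map in Step~1 with the correct normalisation so that the final answer comes out as the stated homogeneous pair $(S_\tau'' : S_\tau S_\tau'' - 2(S_\tau')^2)$ rather than some equivalent-but-differently-scaled expression, and then arguing cleanly in Step~3 that the resulting point cannot lie on $\Gamma$ using only regularity ($S' \neq 0$) rather than the stronger monotonicity. Everything else is a two-line computation.
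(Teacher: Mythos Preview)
Your Steps 1--2 are essentially the paper's own argument. The paper simply fixes the normalisation up front by writing the transition map as $\tilde S = (S - S_\tau)/(S-a)$ (sending $\Gamma(\tau)\mapsto 0$ and $A=(1:a)\mapsto\infty$), then sets $f=S_t-S_\tau$, $b=a-S_\tau$, differentiates $\tilde S_t=f/(f-b)$ twice, and reads off $a = S_\tau - 2(S_\tau')^2/S_\tau''$. Your general M\"obius form $\sigma_t=(\alpha S_t+\beta)/(\gamma S_t+\delta)$ leads to the same one-line relation $\ddot S_\tau(\gamma S_\tau+\delta)=2\gamma\dot S_\tau^{\,2}$ and hence the same projective point; the only difference is that the paper's specific normalisation saves you the ``bookkeeping'' you flag as the main obstacle.

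Your Step 3, however, is aimed at the wrong target. The phrase ``$\Delta_\tau\neq\Gamma(t)$ for any $t$'' should be read in light of the general statement (Theorem~\ref{thm-curva-derivativa}), which only asserts $\Delta_\tau\in\Gamma(\tau)^\pitchfork$, i.e.\ $\Delta_\tau\neq\Gamma(\tau)$; and this is immediate from the formula, since $(1:S_\tau)=(S_\tau'':S_\tau S_\tau''-2(S_\tau')^2)$ forces $(S_\tau')^2=0$. The paper's proof does not attempt any global disjointness claim, and in fact the global claim is false: for $S_t=t^3+t$ (regular on all of $\mathbb R$) one computes $\Delta_1=(3:-10)$, which equals $\Gamma(t_0)$ at the real root of $t^3+t=-10/3$. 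So your hand-wavy contradiction in Step~3 cannot work for $t_0\neq\tau$; drop the global version and keep only the trivial check at $t=\tau$.
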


The assignment $\tau \mapsto \Delta_\tau$ will be called the derivative curve of the Jacobi curve $\Gamma (t)$.  The proof that such curve exists in this bidimensional setting is simple enough.

\begin{proof} Given $A = (1:a) \in \bar{\Lambda}^\pitchfork = \mathbb{RP}^1 \backslash \{ \bar{\Lambda} \}$, the Cartesian coordinate $\tilde{S}$ in $A^\pitchfork = \mathbb{RP}^1 \backslash \{ A\}$ and origin in $S_\tau$ is given as a function of the Cartesian coordinate $S$  in 
$\bar{\Lambda}^\pitchfork$, as:
$$
\tilde{S} = \frac{S - S_\tau}{S - a} \, .
$$
Then, substituting the Cartesian coordinate $S$ by $S_t$, we get the equation for the line $\Gamma(t)$ in the affine space $A^\pitchfork$:
$$
\tilde{S}_t = \frac{S_t - S_\tau}{S_t - a} \, .
$$
We are looking for a number $a$ such that $\tilde{S}''_\tau = 0$.    Writing $f(t) = S_t - S_\tau$, we get:
$$
\tilde{S}_t = \frac{f(t)}{f(t) + S_\tau - a} =  \frac{f(t)}{f(t) - b} \, , 
$$
with $b = a - S_\tau$.  Then,
$$
\tilde{S}_\tau' = - \frac{bf'}{(f-b)^2} \, , \quad \mathrm{and} \qquad \tilde{S}_\tau'' = -b \frac{f''(f-b) - 2(f')^2}{(f-b)^3} \, .
$$
Thus, if we wish $\tilde{S}''_\tau = 0$, we get: $f''(\tau) (f(\tau) - b) - 2 f'(\tau)^2 = 0$.    Substituting $f(\tau ) = 0$, $f'(\tau) = S'_\tau$, $f''(\tau) = S_\tau''$, and $b = a - S_\tau$, we obtain the desired expression:
$$
a = S_\tau - \frac{2 (S_\tau')^2}{S_\tau''} \, ,
$$
that corresponds to the point with projective coordinates (\ref{eq:Atau1}).
\end{proof}


\subsubsection{The definition and construction of the derivative curve in the general case}

Using as a guide the construction described in the previous section, we will proceed to construct the analogue of the curve $\tau \mapsto \Delta_\tau$, \textit{cfr.} (\ref{eq:Atau1}), when $\dim W = 2n$.  As discussed in \S \ref{sec:coordinates}, \textit{cfr.} (\ref{eq:S_coordinate}), let us consider a coordinate system $S=S^{\Lambda}_{\overline{\Lambda}^{\pitchfork}}$ on the affine space $\overline{\Lambda}^{\pitchfork}\in \mathscr{L}(W)$, with origin at $\Lambda$, that is $S(\Lambda)=0$.
For a given regular Jacobi curve $\Gamma=\Gamma(t)$ with $t\in I \subset \mathbb{R}$, we have that $S_t :=S\left(\Gamma(t)\right)$ is differentiable and the matrix $S'_t\in\mathbb{R}^{n \times n}$ is regular in any coordinate system $S^{\Lambda}_{\overline{\Lambda}^{\pitchfork}}$ with $\Lambda\in\overline{\Lambda}^{\pitchfork}$. 

\begin{remark}\label{rmk-coordinates-S}
Note that choosing the parameter $\tau\in I$ such that $\Gamma(\tau)\in\Lambda^{\pitchfork} \cap \overline{\Lambda}^{\pitchfork}$, then $S-S_{\tau}=S^{\Gamma(\tau)}_{\overline{\Lambda}^{\pitchfork}}$, with $S_{\tau}$ the matrix of coordinates of $\Gamma(\tau)$,  is a coordinate system on $\overline{\Lambda}^{\pitchfork}$ with origin at $\Gamma(\tau)$. 
Hence $(S-S_{\tau})^{-1}=S^{\overline{\Lambda}}_{\Gamma(\tau)^{\pitchfork}}$ are coordinates on $\Gamma(\tau)^{\pitchfork}\cap \Lambda^{\pitchfork} \cap \overline{\Lambda}^{\pitchfork}$, and the matrix of coordinates $S-S_{\tau}$ is invertible at any Lagrangian subspace in $\Gamma(\tau)^{\pitchfork}\cap \Lambda^{\pitchfork} \cap \overline{\Lambda}^{\pitchfork}$.   Also, we can observe that the matrix $(S_t-S_{\tau})^{-1}$ are the coordinates of the curve $\Gamma=\Gamma(t)$ on $\Gamma(\tau)^{\pitchfork}\cap \Lambda^{\pitchfork} \cap \overline{\Lambda}^{\pitchfork}$ for $t$,  $\tau\neq t$, in an open interval that we can consider as the interval $I$. 
\end{remark}

Given a Jacobi curve $\Gamma$ and $\Delta\in \mathscr{L}(W)$ we will denote by $\Gamma_{\Delta}$ the restriction of the curve $\Gamma$ to the affine space $\Delta^{\pitchfork}\subset \mathscr{L}(W)$, i.e., $\Gamma_\Delta (t) = \Gamma (t)$, provided that $\Gamma(t) \in \Delta^{\pitchfork}$. Then the derivatives $\Gamma'_{\Delta}$, $\Gamma''_{\Delta}$, $\Gamma'''_{\Delta}$, ... of $\Gamma_{\Delta}$ are defined in the vector space $\overrightarrow{\Delta}^{\pitchfork}$ associated to $\Delta^{\pitchfork}$.   

Consider now a Jacobi curve $\Gamma (t)$, then fixed a point $\Gamma (\tau)$ of the curve, and $\Delta \in \Gamma (\tau)^\pitchfork$, there exists $\epsilon > 0$, such that $\Gamma (t) \in \Delta^\pitchfork$ for all $|t - \tau | < \epsilon$, and we denote as indicated before, by $\Gamma_\Delta$ the restriction of $\Gamma$ to $\Delta^\pitchfork$.
Under these conditions we get the following theorem that extends our previous result, Prop. \ref{prop:derivative2}:

\begin{theorem}\label{thm-curva-derivativa}
There exists a unique $\Delta_\tau\in \Gamma(\tau)^{\pitchfork}$ such that $\Gamma''_{\Delta_\tau} (\tau) = 0$.
\end{theorem}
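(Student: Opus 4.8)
The plan is to reduce the general statement to an explicit matrix computation in a suitable affine chart, mimicking the bidimensional case of Proposition \ref{prop:derivative2}. Fix the parameter value $\tau$. By Remark \ref{rmk-coordinates-S}, after choosing a Lagrangian decomposition $W=\Lambda\oplus\overline{\Lambda}$ with $\Gamma(\tau)\in\Lambda^{\pitchfork}\cap\overline{\Lambda}^{\pitchfork}$, we may work in the coordinate system $S=S^{\Lambda}_{\overline{\Lambda}^{\pitchfork}}$, so that $\Gamma(t)$ is represented by a symmetric matrix $S_t$ with $S'_\tau$ invertible (regularity). A candidate $\Delta\in\Gamma(\tau)^{\pitchfork}$ transverse also to $\Lambda$ and $\overline{\Lambda}$ is described, again in the same $(\mathbf{e},\bar{\mathbf{e}})$-chart, by a symmetric matrix $A$ (its coordinate $\langle\Lambda,\Delta,\overline{\Lambda}\rangle$). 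The first step is to write down the change of affine chart from $\overline{\Lambda}^{\pitchfork}$ to $\Delta^{\pitchfork}$ explicitly: if $\widetilde{S}$ denotes the affine coordinate on $\Delta^{\pitchfork}$ with origin at $\Gamma(\tau)$, then, generalizing the Möbius formula $\widetilde{S}=(S-S_\tau)(S-A)^{-1}$ used in the $n=1$ proof, one has
\[
\widetilde{S}_t = (S_t - S_\tau)\,(S_t - A)^{-1}\, ,
\]
which I would justify via Lemma \ref{lema-base-simplectica} and the projector description of the affine structure in \S\ref{sec:affine} (the coordinate $\widetilde{S}$ is read off from the equations of $\Gamma(t)$ relative to the decomposition $W=\Gamma(\tau)\oplus\Delta$).

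The second step is to differentiate this expression twice at $t=\tau$. Setting $f(t)=S_t-S_\tau$ and $B=A-S_\tau$ (so $f(\tau)=0$, $f'(\tau)=S'_\tau$, $f''(\tau)=S''_\tau$), we get $\widetilde{S}_t=f(t)(f(t)-B)^{-1}$, and a direct computation gives
\[
\widetilde{S}'_\tau = -\,f'(\tau)\,B^{-1}\, ,\qquad
\widetilde{S}''_\tau = -\,f''(\tau)\,B^{-1} \;-\; 2\,f'(\tau)\,B^{-1}\,f'(\tau)\,B^{-1}\, .
\]
(Here I use $f(\tau)=0$ repeatedly; the derivatives of $(f-B)^{-1}$ are handled by the standard rule $\frac{d}{dt}g^{-1}=-g^{-1}g'g^{-1}$.) Setting $\widetilde{S}''_\tau=0$ forces
\[
f''(\tau) \;+\; 2\,f'(\tau)\,B^{-1}\,f'(\tau) \;=\;0\, ,
\]
i.e. $B^{-1} = -\tfrac12\,(S'_\tau)^{-1}\,S''_\tau\,(S'_\tau)^{-1}$, which is well defined and invertible precisely because $S'_\tau$ is invertible and (in the nondegenerate subcase) $S''_\tau$ is. This yields $A = S_\tau - 2\,S'_\tau\,(S''_\tau)^{-1}\,S'_\tau$ when $S''_\tau$ is invertible, and in general $B$ is determined uniquely by the displayed equation even when $S''_\tau$ is singular — one just works with $B^{-1}$ as the primary object. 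Translating back, $A$ is symmetric (since $S'_\tau$, $S''_\tau$ are), so it genuinely defines a point $\Delta_\tau=\Gamma_{A}\in\Gamma(\tau)^{\pitchfork}$, and uniqueness is immediate because $B^{-1}$, hence $\Delta_\tau$, is forced.

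The main obstacle I anticipate is twofold. First, I must verify that the formula $\widetilde{S}_t=(S_t-S_\tau)(S_t-A)^{-1}$ is indeed the correct coordinate expression for the affine structure on $\Delta^{\pitchfork}$ — this requires care with which decomposition ($W=\Gamma(\tau)\oplus\Delta$ versus $W=\Lambda\oplus\Delta$) the projectors are taken along, and a clean way is to invoke the projector/affine-difference description $\langle\bar\Lambda,\Gamma,\Lambda\rangle = \langle W,\Gamma,\Lambda\rangle-\langle W,\bar\Lambda,\Lambda\rangle$ from \S\ref{sec:affine} rather than chasing bases. Second, I must handle the degenerate case $\det S''_\tau=0$ uniformly: the resolution is to never invert $S''_\tau$ but instead to observe that the equation $f''(\tau)+2f'(\tau)B^{-1}f'(\tau)=0$ always has the unique solution $B^{-1}=-\tfrac12(S'_\tau)^{-1}S''_\tau(S'_\tau)^{-1}$, and that this still corresponds to a legitimate Lagrangian subspace transverse to $\Gamma(\tau)$ — when $S''_\tau=0$ we recover $B^{-1}=0$, i.e. $\Delta_\tau=\overline{\Lambda}$, matching the bidimensional case. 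A final remark I would add is that the resulting $\Delta_\tau$ is independent of the auxiliary choice of $\overline{\Lambda}$ (equivalently, of the chart), which follows either from the intrinsic characterization "$\Gamma''_{\Delta_\tau}(\tau)=0$" itself or by a direct compatibility check between two charts; this justifies calling $\tau\mapsto\Delta_\tau$ the derivative curve.
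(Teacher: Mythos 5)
Your computation in the chart where the unknown $\Delta$ is transverse to $\overline{\Lambda}$ is sound and reproduces the paper's Lemma \ref{lem-derivative-1}. One caveat you should make explicit: $(S_t-S_\tau)(S_t-A)^{-1}$ is not the symplectically normalized coordinate of $\Gamma(t)$ in $\Delta^{\pitchfork}$ used in the paper, which is $\widetilde S_t=\bigl((S_t-S_\tau)^{-1}-(A-S_\tau)^{-1}\bigr)^{-1}$; however $\widetilde S_t=-\,(S_t-S_\tau)(S_t-A)^{-1}(A-S_\tau)$, so your expression differs from the true coordinate only by right multiplication by a fixed invertible matrix, and killing its second derivative at $\tau$ is indeed equivalent to $\Gamma''_{\Delta}(\tau)=0$. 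Granting that, your differentiation is correct and forces $(A-S_\tau)^{-1}=-\tfrac12 (S'_\tau)^{-1}S''_\tau(S'_\tau)^{-1}$, exactly the paper's Eq. (\ref{eq-Stau-A}).

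The genuine gap is the case where $S''_\tau$ is singular, i.e. precisely the existence half that the paper treats with Lemmas \ref{lem-derivative-2}--\ref{lem-derivative-4}. Your derivation presupposes that $\Delta$ lies in $\overline{\Lambda}^{\pitchfork}$, so that $B=A-S_\tau$ is an honest invertible symmetric matrix; when $S''_\tau$ is singular the computation shows only that no solution exists inside $\overline{\Lambda}^{\pitchfork}$, while the intended $\Delta_\tau$ is the point of $\Gamma(\tau)^{\pitchfork}$ whose coordinate in the chart $S^{\overline{\Lambda}}_{\Gamma(\tau)^{\pitchfork}}$ is the non-invertible matrix $-\tfrac12(S'_\tau)^{-1}S''_\tau(S'_\tau)^{-1}$, which lies outside your chart. ``Working with $B^{-1}$ as the primary object'' does not by itself prove that this point satisfies $\Gamma''_{\Delta_\tau}(\tau)=0$: your formula for $\widetilde S_t$ is unavailable for it, and the chart-independence you invoke cannot be deduced from the intrinsic characterization, because in this case you have not yet shown the candidate meets that characterization --- that argument is circular. (The sub-case $S''_\tau=0$, where $\Delta_\tau=\overline{\Lambda}$ and the curve's own chart gives $\Gamma''_{\overline{\Lambda}}(\tau)=S''_\tau=0$, is easy; the problem is $S''_\tau$ singular but nonzero.) Closing this requires either a genuine two-chart compatibility computation, or what the paper actually does: introduce the rescaled curve $\Gamma^{\Delta}(t)=(t-\tau)\bigl(\Gamma(t)-\Delta\bigr)_{\Gamma(\tau)^{\pitchfork}}$, prove via the divided-difference Lemmas \ref{lem-derivative-2} and \ref{lem-derivative-3} that it extends smoothly across $t=\tau$, characterize $\Delta_\tau$ chart-freely by $(\Gamma^{\Delta_\tau})'(\tau)=0$ (Lemma \ref{lem-derivative-4}), and only then choose $\overline{\Lambda}$ transverse to it so that Lemma \ref{lem-derivative-1} applies. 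Relatedly, your uniqueness is ``immediate'' only among $\Delta\in\overline{\Lambda}^{\pitchfork}$; to exclude a second solution not transverse to $\overline{\Lambda}$ you must rerun the argument in a chart transverse to both candidates, which is possible but must be said.
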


The proof of Thm.  \ref{thm-curva-derivativa} has two parts.   First, assuming its existence, an explicit expression for the coordinates of $\Delta_\tau$ will be given.

\begin{lemma}\label{lem-derivative-1}
For fixed $t=\tau$, let us assume that there exists $\Delta_\tau\in \Gamma(\tau)^{\pitchfork}$ such that $\Gamma''_{\Delta_\tau}=0$. If $S^0_{\tau}=S^{\Lambda}_{\overline{\Lambda}^{\pitchfork}}\left( \Delta_\tau \right)$, then both $S^0_{\tau}-S_{\tau}$ and $S''_{\tau}$, are invertible and we get:
\begin{equation}\label{eq:Atau}
S^0_{\tau} = S_{\tau} - 2 S'_{\tau} (S''_{\tau})^{-1} S'_{\tau}
\end{equation}

\end{lemma}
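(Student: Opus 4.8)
The plan is to carry out everything in a single affine chart adapted to $\Delta_\tau$, which turns the statement into a short matrix differentiation that is the non-commutative counterpart of the scalar computation in the proof of Proposition \ref{prop:derivative2}. Throughout I keep the hypotheses of the lemma: $\Delta_\tau\in\Gamma(\tau)^{\pitchfork}$ exists with $\Gamma''_{\Delta_\tau}(\tau)=0$, and (implicit in the very notation $S^0_\tau=S^{\Lambda}_{\overline{\Lambda}^{\pitchfork}}(\Delta_\tau)$) both $\Gamma(\tau)$ and $\Delta_\tau$ lie in $\overline{\Lambda}^{\pitchfork}$, so that $S_\tau$ and $S^0_\tau$ are defined. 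I abbreviate $f_t=S_t-S_\tau$ and $B=S^0_\tau-S_\tau$, so that $f_\tau=0$, $f'_\tau=S'_\tau$, $f''_\tau=S''_\tau$. Since $\Delta_\tau$ and $\Gamma(\tau)$ are transverse, $B$ is invertible; this is precisely the computation $\det(\overline{S}-S)\neq 0$ carried out in the proof of Lemma \ref{lema-base-simplectica}.

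Next I would write the curve $\Gamma$ in the affine chart on $\Delta_\tau^{\pitchfork}$ with origin $\Gamma(\tau)$, namely the chart $S^{\Gamma(\tau)}_{\Delta_\tau^{\pitchfork}}$. Its coordinate $\widetilde S_t$ is obtained from $S_t$ by composing four elementary changes of chart: (i) translate the origin on $\overline{\Lambda}^{\pitchfork}$ from $\Lambda$ to $\Gamma(\tau)$, $X\mapsto X-S_\tau$; (ii) pass from $S^{\Gamma(\tau)}_{\overline{\Lambda}^{\pitchfork}}$ to $S^{\overline{\Lambda}}_{\Gamma(\tau)^{\pitchfork}}$, which by Remark \ref{rmk-coordinates-S} is the inversion $Y\mapsto Y^{-1}$; (iii) translate the origin on $\Gamma(\tau)^{\pitchfork}$ from $\overline{\Lambda}$ to $\Delta_\tau$ — since $\Delta_\tau$ has coordinate $B$ after step (i), hence $B^{-1}$ after step (ii), this is $Z\mapsto Z-B^{-1}$; (iv) pass from $S^{\Delta_\tau}_{\Gamma(\tau)^{\pitchfork}}$ to $S^{\Gamma(\tau)}_{\Delta_\tau^{\pitchfork}}$, again an inversion $W\mapsto W^{-1}$ (the same fact relating the charts $S^C_{A^{\pitchfork}}$ and $S^A_{C^{\pitchfork}}$ for transverse $A$, $C$, \textit{cfr.} \eqref{eq:eq_Lag}). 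Altogether
\[
\widetilde S_t \;=\; \bigl(f_t^{-1}-B^{-1}\bigr)^{-1} \;=\; B\,(B-f_t)^{-1}f_t ,
\]
a symmetric matrix for each $t$ (the inverse of the symmetric matrix $f_t^{-1}-B^{-1}$), with $\widetilde S_\tau=0$ as it must be since $\Gamma(\tau)$ is the origin of the chart. Because $S^{\Gamma(\tau)}_{\Delta_\tau^{\pitchfork}}$ is an affine coordinate system on $\Delta_\tau^{\pitchfork}$, the condition $\Gamma''_{\Delta_\tau}(\tau)=0$ is equivalent to $\widetilde S''_\tau=0$.

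Finally I would differentiate. Writing $h_t=(B-f_t)^{-1}$ one has $h'_t=h_t f'_t h_t$, hence $\widetilde S''_t=B\,(h''_t f_t+2h'_t f'_t+h_t f''_t)$, and evaluating at $\tau$ (using $f_\tau=0$, $h_\tau=B^{-1}$, $h'_\tau=B^{-1}S'_\tau B^{-1}$) gives
\[
\widetilde S''_\tau \;=\; 2\,S'_\tau B^{-1}S'_\tau + S''_\tau .
\]
Thus $\Gamma''_{\Delta_\tau}(\tau)=0$ forces $S''_\tau=-2\,S'_\tau B^{-1}S'_\tau$. Since $\Gamma$ is a regular Jacobi curve, $S'_\tau$ is invertible (Remark \ref{rem:Sprime}), and $B$ is invertible by the first paragraph; therefore the right-hand side is invertible, so $S''_\tau$ is invertible, and solving for $B$ yields $B=-2\,S'_\tau(S''_\tau)^{-1}S'_\tau$, i.e. $S^0_\tau=S_\tau-2\,S'_\tau(S''_\tau)^{-1}S'_\tau$, which is exactly \eqref{eq:Atau}; the right-hand side is automatically symmetric since $S'_\tau$, $S''_\tau$ and the inverse of a symmetric matrix are symmetric. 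I expect the only genuinely delicate point to be steps (ii)--(iv): correctly identifying the affine chart in which $\Gamma$ reads $\widetilde S_t=B(B-f_t)^{-1}f_t$ and checking that the apparent singularities at $t=\tau$ (coming from $f_\tau=0$) cancel; once that is in place, the remainder is routine matrix calculus directly generalising the scalar steps of Proposition \ref{prop:derivative2}.
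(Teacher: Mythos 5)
Your proposal is correct and follows essentially the same route as the paper: both express the curve in the chart $S^{\Gamma(\tau)}_{\Delta_\tau^{\pitchfork}}$ as $\widetilde S_t=\bigl((S_t-S_\tau)^{-1}-(S^0_\tau-S_\tau)^{-1}\bigr)^{-1}=(S^0_\tau-S_\tau)(S^0_\tau-S_t)^{-1}(S_t-S_\tau)$, differentiate twice, evaluate at $t=\tau$ to get $\widetilde S''_\tau=2S'_\tau(S^0_\tau-S_\tau)^{-1}S'_\tau+S''_\tau$, and solve $\widetilde S''_\tau=0$ for $S^0_\tau$, with invertibility of $S^0_\tau-S_\tau$ coming from transversality and invertibility of $S''_\tau$ then following from that of $S'_\tau$. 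The only difference is cosmetic: you justify the chart formula by composing translations and inversions (which the paper asserts via Remark \ref{rmk-coordinates-S}) and you organize the derivatives via $h_t=(B-f_t)^{-1}$ rather than the paper's $B_t,C,D_t$ bookkeeping.
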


\begin{proof}
Recall that we can write 
\[
S^{\Lambda}_{\overline{\Lambda}^{\pitchfork}}\left(\Gamma\right)=\left\langle \Lambda,\Gamma,\overline{\Lambda} \right\rangle : \Lambda \longrightarrow\overline{\Lambda}  .
\]

Moreover we have $\left\langle \overline{\Lambda},\Gamma,\Lambda \right\rangle = \left\langle \Lambda,\Gamma,\overline{\Lambda} \right\rangle^{-1}$, and then, we have:
\[
\left\{
\begin{tabular}{l}
$S_t - S_{\tau} = S^{\Gamma(\tau)}_{\overline{\Lambda}^{\pitchfork}}\left(\Gamma(t)\right) $  \\
\\
$S^0_{\tau} - S_{\tau} = S^{\Gamma(\tau)}_{\overline{\Lambda}^{\pitchfork}}\left(\Delta_\tau\right) $
\end{tabular}
\right.
\]
then $(S^0_{\tau} - S_{\tau})^{-1} = S^{\overline{\Lambda}}_{\Gamma(\tau)^{\pitchfork}}\left(\Delta_\tau\right) $ are the coordinates of $\Delta_\tau$ in the affine space $\Gamma(\tau)^{\pitchfork}$. So, $S^0_{\tau} - S_{\tau}$ is invertible. 

Fixed $\Gamma(\tau)$, now we will seek the expression of $S^0_{\tau}$. 
Observe that:
\begin{equation}\label{eq:Stilde}
\widetilde{S}_t = \left(\left(S_t - S_{\tau}\right)^{-1} - \left(S^0_{\tau}-S_{\tau}\right)^{-1}\right)^{-1} = S^{\Gamma(\tau)}_{\Delta_\tau^{\pitchfork}}\left(\Gamma(t)\right):  \Gamma(\tau) \longrightarrow \Delta_\tau   
\end{equation}
corresponds to the coordinates of $\Gamma(t)$ in $\Delta_\tau^{\pitchfork}$ with origin at $\Gamma(\tau)$.
Let us compute the condition:
\[
\left.\widetilde{S}''_t\right|_{t=\tau}=0   .
\]

For short, in the following computation, we will call 
\[
\left\{
\begin{tabular}{l}
$B_t=S_t- S_{\tau}$ \\
$C = S^0_{\tau}-S_{\tau}$ \\
$D_t = S^0_{\tau} - S_t$
\end{tabular}
\right.
\]
and since $B'_t=S'_t=-D'_t$ and $C'=0$, substituting at $t=\tau$, we get: 
\begin{equation}\label{eq-B-C-D-tau}
\left\{
\begin{tabular}{lll}
$B_{\tau}=0$ & , & $B^{(i)}_{\tau}=S^{(i)}_{\tau}$ \\
$D_{\tau}=C$ & , & $D^{(i)}_{\tau}=-S^{(i)}_{\tau}$
\end{tabular}
\right.
\end{equation}
where $(i)$ denotes the $i$--th order derivative with $i=1,2,\ldots$

Notice that we can re--write (\ref{eq:Stilde}) as:
\begin{align}\label{eq-Stilde-1}
\widetilde{S}_t & = \left(\left(S_t - S_{\tau}\right)^{-1} - \left(S^0_{\tau}-S_{\tau}\right)^{-1}\right)^{-1} = \nonumber \\ 
& =  \left(B_t^{-1} - C^{-1}\right)^{-1} = \left(B_t^{-1} B_t\left(B_t^{-1} - C^{-1}\right)C C^{-1}\right)^{-1} = \nonumber \\ 
& =  \left(B_t^{-1} \left(C-B_t\right)C^{-1}\right)^{-1} = C \left(C-B_t\right)^{-1} B_t = C \left(D_t\right)^{-1} B_t =\\
& = \left( S^0_{\tau}-S_{\tau} \right)\left( S^0_{\tau}-S_t \right)^{-1} \left( S_t-S_{\tau} \right) \label{eq-Stilde-2}
\end{align}

From equation (\ref{eq-Stilde-1}), we have $\widetilde{S}_t = C \left(D_t\right)^{-1} B_t $, then:
\begin{align}
\widetilde{S}'_t &= C \left(-(D_t)^{-1}D'_t (D_t)^{-1}B_t + (D_t)^{-1}B'_t\right) = \nonumber \\
&= C (D_t)^{-1}S'_t \left((D_t)^{-1}B_t + \mathrm{Id}\right) \label{eq-S-prima} \, ,
\end{align}
and, substituting $t=\tau$ in equation (\ref{eq-B-C-D-tau}), we get
\begin{equation}\label{eq-Sprima-tau}
\widetilde{S}'_{\tau} = S'_{\tau} \, .
\end{equation}

Deriving again (\ref{eq-S-prima}), one obtains:
\begin{equation}\label{eq-S-prima-2}
\widetilde{S}''_t = C (D_t)^{-1}\left(2S'_t (D_t)^{-1}S'_t + S''_t\right)\left((D_t)^{-1}B_t + \mathrm{Id}\right) \, ,
\end{equation}
and, again for $t=\tau$:
\begin{equation}
\widetilde{S}''_{\tau}  = 2S'_{\tau} C^{-1}S'_{\tau} + S''_{\tau} = 
  2S'_{\tau} (S^0_{\tau}-S_{\tau})^{-1} S'_{\tau} + S''_{\tau}  \, .
\end{equation}

If $\widetilde{S}''_{\tau}=0$ then 
\begin{equation}\label{eq-Stau-A}
(S^0_{\tau}-S_{\tau})^{-1}= -\frac{1}{2}(S'_{\tau})^{-1}S''_{\tau}(S'_{\tau})^{-1}
\end{equation}
and since $S^0_{\tau}-S_{\tau}$ and $S'_{\tau}$ are invertible, so is $S''_{\tau}$.   Finally, we can re--write Eq. (\ref{eq-Stau-A}) as 
\begin{equation}\label{eq-A-tau}
S^0_{\tau}= S_{\tau}  -2S'_{\tau} (S''_{\tau})^{-1}S'_{\tau}    
\end{equation}
to obtain the expression of $S^0_{\tau}$ as claimed.
\end{proof}

Before the proof of the existence of $\Delta_\tau$ we will establish some technical facts. 

\begin{lemma}\label{lem-derivative-2}
Let $\varphi:I\rightarrow \mathbb{R}$ be a smooth function. Then the function 
\[
g(t,s)= \left\{
\begin{tabular}{ll}
$\frac{\varphi(t)-\varphi(s)}{t-s}$ & if $t\neq s$ \\
$\varphi'(t)$ & if $t = s$
\end{tabular}
\right.
\]
defined in a neighbourhood of the diagonal $D_I=\{(t,t):t\in I\}$ is smooth.  
\end{lemma}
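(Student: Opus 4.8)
The plan is to prove smoothness of $g$ by exhibiting it as an integral of the smooth function $\varphi'$ over a parameter, which manifestly depends smoothly on $(t,s)$. The key identity is the fundamental theorem of calculus in the form
\[
\varphi(t) - \varphi(s) = \int_0^1 \frac{d}{du}\varphi\big(s + u(t-s)\big)\, du = (t-s)\int_0^1 \varphi'\big(s + u(t-s)\big)\, du \, ,
\]
valid for all $(t,s)$ in a convex neighbourhood of a point on the diagonal $D_I$ (and such neighbourhoods form a basis, since $I$ is an interval). Dividing by $t - s$ when $t \neq s$ gives $g(t,s) = \int_0^1 \varphi'(s + u(t-s))\, du$, and setting $t = s$ in the integral gives $\int_0^1 \varphi'(s)\, du = \varphi'(s)$, which is exactly the diagonal value assigned in the statement. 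Hence the single formula
\[
g(t,s) = \int_0^1 \varphi'\big(s + u(t-s)\big)\, du
\]
represents $g$ on a whole neighbourhood of $D_I$, with no case distinction.

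The remaining step is to argue that this integral defines a smooth function of $(t,s)$. The integrand $(t,s,u) \mapsto \varphi'(s + u(t-s))$ is smooth on its domain because $\varphi$ is smooth and $(t,s,u) \mapsto s + u(t-s)$ is a polynomial, hence smooth, map into $I$; composition and the product structure preserve smoothness. Differentiation under the integral sign — legitimate here since we integrate over the compact interval $[0,1]$ and all partial derivatives of the integrand with respect to $t$ and $s$ are jointly continuous, hence locally bounded — shows that $g$ has partial derivatives of all orders, given by $\partial_t^a \partial_s^b g(t,s) = \int_0^1 \partial_t^a \partial_s^b\big[\varphi'(s + u(t-s))\big]\, du$, each of which is again continuous by the same continuity-and-compactness argument. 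Therefore $g \in C^\infty$ near $D_I$.

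I do not expect any genuine obstacle: the only point requiring a word of care is that the convex (star-shaped) neighbourhood on which the integral representation holds must be chosen so that the segment from $s$ to $t$ stays inside $I$, which is automatic for sub-intervals of $I$ and, more generally, for any convex open subset of $I \times I$ meeting the diagonal. One could alternatively phrase the proof via Hadamard's lemma (a smooth function vanishing on the diagonal factors as $(t-s)$ times a smooth function), but the explicit integral formula above is self-contained and makes the extension of $g$ to the diagonal transparent, so that is the route I would take.
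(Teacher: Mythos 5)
Your proof is correct and is essentially the same as the paper's: both rewrite the difference quotient via the fundamental theorem of calculus as $g(t,s)=\int_0^1 \varphi'(\lambda t+(1-\lambda)s)\,d\lambda$ and conclude smoothness from this integral representation. Your additional remarks on differentiation under the integral sign and the convexity of the neighbourhood simply make explicit what the paper leaves implicit.
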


\begin{proof}
By Barrow's rule, we have
\[
\varphi(t)-\varphi(s) = \int_0^1 \left.\frac{d}{d\lambda}\right|_{\lambda}\varphi(\lambda t + (1-\lambda) s) ~d\lambda = (t-s) \int_0^1 \varphi'(\lambda t + (1-\lambda) s) ~d\lambda 
\]
then 
\[
g(t,s)= \int_0^1 \varphi'(\lambda t + (1-\lambda) s) ~d\lambda
\] 
is the sought function.
\end{proof}

\begin{lemma}\label{lem-derivative-3}
Given a Jacobi curve $\Gamma=\Gamma(t) \simeq \begin{bmatrix} \mathrm{Id} \\ S_t \end{bmatrix}$, then the function  
\[
G(t,s)= \left\{
\begin{tabular}{ll}
$\frac{S_t-S_s}{t-s}$ & if $t\neq s$ \\
$S'_t$ & if $t = s$
\end{tabular}
\right.
\]
is smooth, symmetric and invertible with smooth inverse $F(t,s)=G(t,s)^{-1}$ in a neighbourhood of the diagonal $D_I$.  
\end{lemma}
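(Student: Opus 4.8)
The plan is to verify the three asserted properties of $G$ — smoothness, symmetry, and invertibility with smooth inverse — one at a time, each reducing to an elementary fact once Lemma \ref{lem-derivative-2} and the regularity hypothesis are available.

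I would begin with smoothness, which is Lemma \ref{lem-derivative-2} applied entrywise. Since $\Gamma$ is a smooth curve in $\mathscr{L}(W)$, in the chart $S = S^{\Lambda}_{\overline{\Lambda}^{\pitchfork}}$ each entry $(S_t)_{ij}$ is a smooth real-valued function of $t$ on $I$; applying Lemma \ref{lem-derivative-2} with $\varphi = (S_\bullet)_{ij}$ shows that the $(i,j)$ entry of $G$ is smooth in a neighbourhood of $D_I$. Equivalently — and this is the form I would actually record, since it also settles symmetry — Barrow's rule gives the integral representation
\[
G(t,s) = \int_0^1 S'_{\lambda t + (1-\lambda)s}\, d\lambda ,
\]
valid in a neighbourhood of $D_I$ including the diagonal, from which smoothness follows by differentiating under the integral sign.

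For symmetry I would use that, under the affine identification of \S\ref{sec:coordinates}, the coordinate matrix $S_t$ is symmetric for every $t$; hence $S_t - S_s$ is symmetric, dividing by the scalar $t-s$ preserves this, and $S'_t$ is symmetric as a limit of symmetric matrices. The integral formula above makes this immediate.

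The remaining point — invertibility with smooth inverse on a neighbourhood of $D_I$ — is the only place a (mild) argument is needed, and it is where I expect whatever small difficulty there is to lie. On the diagonal $G(t,t) = S'_t$, and by Definition \ref{def:jacobi} together with Remark \ref{rem:Sprime} the regularity of $\Gamma$ forces $S'_t$ to be invertible for every $t \in I$. Thus $\det G$ is continuous and nowhere zero on $D_I$; for each $t_0 \in I$ it is non-zero on a product neighbourhood of $(t_0, t_0)$, and the union of these is an open set $U \supset D_I$ on which $G$ is invertible. Since matrix inversion is a smooth self-map of the open set of invertible $n \times n$ matrices, $F = G^{-1}$ is smooth on $U$. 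The only thing to watch is that $D_I$ is not compact, so $U$ must be assembled from local pieces rather than produced by a single uniform bound; this is routine.
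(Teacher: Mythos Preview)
Your proof is correct and follows essentially the same route as the paper: entrywise application of Lemma~\ref{lem-derivative-2} for smoothness, matrix symmetry inherited from that of $S_t$, and invertibility near $D_I$ from $\det G(t,t)=\det S'_t\neq 0$ together with smoothness of matrix inversion. Your integral representation and the remark about assembling the neighbourhood from local pieces are small elaborations, but the argument is the same.
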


\begin{proof}
If $S_t=\left( S_{ij}(t) \right)_{i,j=1}^{m}\in\mathbb{R}^{n \times n}$ is a smooth symmetric matrix, then we can apply Lemma \ref{lem-derivative-2} to the components $S_{ij}$ obtaining smooth functions 
\[
g_{ij}(t,s)= \left\{
\begin{tabular}{ll}
$\frac{S_{ij}(t)-S_{ij}(s)}{t-s}$ & if $t\neq s$ \\
$S'_{ij}(t)$ & if $t = s$
\end{tabular}
\right.
\]
So, the matrix $G(t,s)=\left( g_{ij}(t,s) \right)_{i,j=1}^{m}$ is smooth and the symmetry of $S_t$ implies the symmetry of $G$. 
Moreover, since 
\[
\mathrm{det}~G(t,t)=\mathrm{det}~S'_t\neq 0
\] 
then, by remark \ref{rmk-coordinates-S}, $G$ is invertible in a neighbourhood of $D_I$ and 
\[
G(t,s)^{-1}=(t-s)\left( S_t - S_s \right)^{-1}
\] 
which is also smooth in a neighbourhood of $D_I$.
\end{proof}

Recall that, because of transversality, the curve $\Gamma$ is also defined in the affine space $\Gamma(\tau)^{\pitchfork}$ having coordinates:
$$
S_{\Gamma(\tau)^\pitchfork}^{\bar{\Lambda}} (\Gamma (t)) = (S_t - S_\tau)^{-1} \, .
$$
The curve:
\[
\Gamma^{\overline{\Lambda}}(t)=(t-\tau)\left( \Gamma (t) - \overline{\Lambda} \right)_{\Gamma(\tau)^{\pitchfork}}
\]
is defined for $t\neq \tau$ in the vector space $\overrightarrow{\Gamma(\tau)}^{\pitchfork}$, where $\left( \Gamma (t) - \overline{\Lambda} \right)_{\Gamma(\tau)^{\pitchfork}}$ is the vector corresponding to the map
\[
\left( \Gamma (t) - \overline{\Lambda} \right)_{\Gamma(\tau)^{\pitchfork}}=\langle \Gamma (t) ,\overline{\Lambda},\Gamma(\tau) \rangle : \Gamma (t) \rightarrow \Gamma(\tau)
\]
with coordinates given by the function $F(t, \tau) = (t-\tau)  (S_t - S_\tau)^{-1}$.
By Lemma \ref{lem-derivative-3}, $F$ can be extended smoothly to $t=\tau$, because $F(t,t)=(S'_t)^{-1}$ and then $\Gamma^{\overline{\Lambda}}(t)$ can be smoothly extended to $t=\tau$. Moreover, for any $\Delta\in \Gamma(\tau)^{\pitchfork}$, we get:
\[
\left( \Gamma (t) - \overline{\Lambda} \right)_{\Gamma(\tau)^{\pitchfork}} = \left( \Gamma (t) - \Delta \right)_{\Gamma(\tau)^{\pitchfork}} + \left( \Delta - \overline{\Lambda} \right)_{\Gamma(\tau)^{\pitchfork}}
\]
and since the last term on the right hand side of the previous equation does not depend on $t$, then the curve 
\[
\Gamma^{\Delta} (t) = (t-\tau)\left( \Gamma (t) - \Delta \right)_{\Gamma(\tau)^{\pitchfork}}
\]
can also be extended to $t=\tau$.\\

Now, the second part of the proof, that is, the existence of $\Delta_\tau$, follows from the following Lemma.

\begin{lemma}\label{lem-derivative-4}
There exists a unique $\Delta_\tau \in \Gamma(\tau)^{\pitchfork}$ such that $(\Gamma^{\Delta_\tau})'(\tau)=0$, and if $\Delta_\tau \in \overline{\Lambda}^{\pitchfork}$, its  coordinates are given by: 
\[
S(\Delta_\tau) = S_{\tau}-2S'_{\tau} (S''_{\tau})^{-1} S'_{\tau}  .
\]
\end{lemma}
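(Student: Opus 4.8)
The plan is to reduce the whole statement to the affine translation identity displayed just above the Lemma, together with the smooth extension of $\Gamma^{\Delta}$ to $t=\tau$ established there, and then to one explicit differentiation governed by Lemma~\ref{lem-derivative-3}. First I would multiply that identity by $(t-\tau)$ to get, for every $\Delta\in\Gamma(\tau)^{\pitchfork}$,
\[
\Gamma^{\Delta}(t)=\Gamma^{\overline{\Lambda}}(t)-(t-\tau)\,w_\Delta\,,\qquad w_\Delta:=\left(\Delta-\overline{\Lambda}\right)_{\Gamma(\tau)^{\pitchfork}}\in\overrightarrow{\Gamma(\tau)}^{\pitchfork}\,,
\]
where $w_\Delta$ does not depend on $t$. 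Since both $\Gamma^{\Delta}$ and $\Gamma^{\overline{\Lambda}}$ extend smoothly to $t=\tau$, differentiating at $t=\tau$ gives $(\Gamma^{\Delta})'(\tau)=(\Gamma^{\overline{\Lambda}})'(\tau)-w_\Delta$, so that $(\Gamma^{\Delta})'(\tau)=0$ holds precisely when $w_\Delta=(\Gamma^{\overline{\Lambda}})'(\tau)$.

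Next I would note that the assignment $\Delta\mapsto w_\Delta$ is (up to the canonical sign in the affine identifications of \S\ref{sec:coordinates}) the affine coordinate chart $S^{\overline{\Lambda}}_{\Gamma(\tau)^{\pitchfork}}$ of the affine space $\Gamma(\tau)^{\pitchfork}$ with origin $\overline{\Lambda}$, hence a bijection onto $\overrightarrow{\Gamma(\tau)}^{\pitchfork}\cong\mathfrak{sp}(\Gamma(\tau))$; therefore there is a unique $\Delta_\tau\in\Gamma(\tau)^{\pitchfork}$ with $w_{\Delta_\tau}=(\Gamma^{\overline{\Lambda}})'(\tau)$, which settles existence and uniqueness. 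For the coordinate formula I would compute $(\Gamma^{\overline{\Lambda}})'(\tau)$: by Lemma~\ref{lem-derivative-3} write $S_t-S_\tau=(t-\tau)G(t,\tau)$ with $G$ smooth near the diagonal and $G(\tau,\tau)=S'_\tau$ invertible, so the coordinate expression $F(t,\tau)=(t-\tau)(S_t-S_\tau)^{-1}$ of $\Gamma^{\overline{\Lambda}}(t)$ equals $G(t,\tau)^{-1}$; expanding $G(t,\tau)=S'_\tau+\frac{1}{2}S''_\tau(t-\tau)+o(t-\tau)$ and differentiating the matrix inverse at $t=\tau$ yields $(\Gamma^{\overline{\Lambda}})'(\tau)=-\frac{1}{2}(S'_\tau)^{-1}S''_\tau(S'_\tau)^{-1}$. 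Combining this with $S^{\overline{\Lambda}}_{\Gamma(\tau)^{\pitchfork}}(\Delta_\tau)=(S(\Delta_\tau)-S_\tau)^{-1}$ (Remark~\ref{rmk-coordinates-S}), valid when $\Delta_\tau\in\overline{\Lambda}^{\pitchfork}$, gives $(S(\Delta_\tau)-S_\tau)^{-1}=-\frac{1}{2}(S'_\tau)^{-1}S''_\tau(S'_\tau)^{-1}$; in particular $S''_\tau$ is then invertible, and inverting produces $S(\Delta_\tau)=S_\tau-2S'_\tau(S''_\tau)^{-1}S'_\tau$, in agreement with Eq.~(\ref{eq-A-tau}) of Lemma~\ref{lem-derivative-1}. (Alternatively, one may observe that $(\Gamma^{\Delta_\tau})'(\tau)=0$ forces $\widetilde{S}''_\tau=0$ for the coordinate $\widetilde{S}_t$ of $\Gamma$ in $\Delta_\tau^{\pitchfork}$ — again by applying the $G(t,\tau)^{-1}$ trick to $\widetilde{S}_t=(t-\tau)\widetilde{G}_t$ — so the formula follows directly from Lemma~\ref{lem-derivative-1}.)

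The conceptual content is light; the only delicate part is the bookkeeping through the nested affine identifications — in particular passing between a vector of $\overrightarrow{\Gamma(\tau)}^{\pitchfork}$ described with origin $\overline{\Lambda}$ and the matrix $(S-S_\tau)^{-1}$ — and differentiating the matrix-inverse-valued map $F(t,\tau)$ at the diagonal, where $S_t-S_\tau$ itself degenerates; this last point is exactly what Lemma~\ref{lem-derivative-3} is designed to handle. Signs must be tracked with care but raise no genuine difficulty.
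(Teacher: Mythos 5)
Your argument is correct and follows essentially the same route as the paper: both reduce the condition $(\Gamma^{\Delta})'(\tau)=0$ to choosing $\Delta_\tau$ whose coordinate $(S(\Delta_\tau)-S_\tau)^{-1}$ equals the first-order coefficient $\widetilde{F}'(\tau)$ of $\widetilde{F}(t)=(t-\tau)(S_t-S_\tau)^{-1}=G(t,\tau)^{-1}$, computed via Lemma \ref{lem-derivative-3} as $-\tfrac{1}{2}(S'_\tau)^{-1}S''_\tau(S'_\tau)^{-1}$. Your only addition is to spell out uniqueness through the bijectivity of the affine chart $\Delta\mapsto w_\Delta$, which the paper leaves implicit; otherwise the two proofs coincide.
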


\begin{proof}
Fixed $\tau$, consider the Taylor series of the function $\widetilde{F}(t)=F(t,\tau)=(t-\tau)\left(S_t-S_{\tau}\right)^{-1}$ at $t=\tau$, which is given by:
\[
\widetilde{F}(t)=B_{-1}(\tau) + B_0(\tau)\cdot (t-\tau)+B_1(\tau)\cdot (t-\tau)^2+\cdots
\]
with $\widetilde{F}(\tau)=B_{-1}(\tau)=(S'_{\tau})^{-1}$, and $\widetilde{F}'(\tau)=B_0(\tau)$.
Notice that the coefficients $B_i(\tau)$ are symmetric matrices representing the coordinates $S^{\overline{\Lambda}}_{\Gamma(\tau)^{\pitchfork}}$ of the corresponding derivatives of $\Gamma^{\overline{\Lambda}} =\Gamma^{\overline{\Lambda}} (t)$, at $t=\tau$, in the affine space $\Gamma(\tau)^{\pitchfork}$, in particular $\widetilde{F}'(\tau)=B_0(\tau)\simeq (S^{\overline{\Lambda}}_{\Gamma(\tau)^\pitchfork})'(\tau)$. 

We can choose $\Delta_\tau \in\Gamma(\tau)^{\pitchfork}$ such that $\left(S(\Delta_\tau)-S_{\tau}\right)^{-1}=B_0(\tau)$, then the curve in $\overrightarrow{\Gamma(\tau)}^\pitchfork$:
\[
\Gamma^{\Delta}(t)=(t-\tau)\left( \Gamma (t) - \Delta_\tau \right)_{\Gamma(\tau)^{\pitchfork}}
\]
can be written in coordinates as: 
\[
\widetilde{F}_{\Delta}(t)=\widetilde{F}(t)-(t-\tau)B_0(\tau)
\]
verifying $\widetilde{F}'_{\Delta}(\tau)=0$, and corresponding to $(\Gamma^{\Delta})'(\tau ) = 0$.

Now, let us compute $B_0$. Notice that the function $f(t)=S_t-S_{\tau}$ has the Taylor series at $t=\tau$ given by:
\[
f(t)=S'_{\tau} (t-\tau)+\frac{1}{2} S''_{\tau} (t-\tau)^2+\frac{1}{6}S'''_{\tau} (t-\tau)^3 + \dots
\]
and so 
\[
\widetilde{G}(t)=G(t,\tau)=\frac{f(t)}{t-\tau}=S'_{\tau}+\frac{1}{2} S''_{\tau} (t-\tau)+\frac{1}{6}S'''_{\tau} (t-\tau)^2 + \dots
\]
therefore, denoting by $(i)$ the $i$--th derivative respect to $t$, we get 
\[
\widetilde{G}^{(i)}(\tau)=\frac{1}{i+1}S^{(i+1)}_{\tau}   .
\]

Since $\widetilde{F}(t)=\widetilde{G}(t)^{-1}$, deriving at $t=\tau$ we obtain 
\[
\widetilde{F}'(\tau)=-\widetilde{G}(\tau)^{-1}\widetilde{G}'(\tau) \widetilde{G}(\tau)^{-1} =-\frac{1}{2}(S'_{\tau})^{-1}S''_{\tau}(S'_{\tau})^{-1}
\] 
and because $\widetilde{F}'(\tau)=B_0(\tau)=\left(S(\Delta)-S_{\tau}\right)^{-1}$, then
\[
\left(S(\Delta)-S_{\tau}\right)^{-1}=-\widetilde{G}(\tau)^{-1}\widetilde{G}'(\tau) \widetilde{G}(\tau)^{-1} =-\frac{1}{2}(S'_{\tau})^{-1}S''_{\tau}(S'_{\tau})^{-1}
\] 
whence 
\[
S(\Delta) = S_{\tau} - 2 S'_{\tau}(S''_{\tau})^{-1} S'_{\tau}   .
\] 
\end{proof}

Now, we are ready to prove Thm. \ref{thm-curva-derivativa}.

\begin{proof}[Proof of Theorem \ref{thm-curva-derivativa}]  We can always choose $\overline{\Lambda}$ such that $\Delta_\tau \in \overline{\Lambda}^\pitchfork$, then since the coordinates of $\Delta_\tau$ in Lemma \ref{lem-derivative-4} coincide with the coordinates of $\Delta_\tau$ in Lemma \ref{lem-derivative-1}, that is:
\[
S(\Delta_\tau) = S_{\tau} - 2 S'_{\tau}(S''_{\tau})^{-1} S'_{\tau} = S^0_{\tau} \, ,
\]
then both  Lagrangian subspaces coincide, thus the proof is concluded.
\end{proof}


\begin{definition}\label{eq:derivative}
For a given Jacobi curve $\Gamma=\Gamma(\tau)$ the unique curve $\Delta \colon I\rightarrow \Gamma(\tau)^{\pitchfork}\subset \mathscr{L}(W)$ such that $\Delta (\tau)\in \Gamma (\tau)^\pitchfork$, and $\Gamma''_{\Delta(\tau)} (\tau) = 0$, for all $\tau \in I$, is called the \emph{derivative curve} of $\Gamma$.
\end{definition}


\subsection{The Ricci curvature of a Jacobi curve}

\subsubsection{The intrinsic definition of the Ricci curvature operator of a Jacobi curve}

We will consider as before $\Gamma (t)$ a Jacobi curve with $\Delta (\tau) = \Delta_\tau$ its derivative curve, \textit{cfr.} (\ref{eq:derivative}).  In the affine space $\Delta_\tau^\pitchfork$ we can take the origin at $\Gamma (\tau)$.   With that choice of origin, the linear space $\overrightarrow{\Delta_\tau^\pitchfork}$ associated to $\Delta_\tau^\pitchfork$ can be identified with (\textit{cfr.} \S \ref{sec:affine}, Eq. (\ref{eq:vector_space})):
$$
\overrightarrow{\Delta_\tau^\pitchfork} = \mathcal{S}(\Gamma (\tau), \Delta_\tau) = \left\{ S \colon \Gamma (\tau) \to \Delta_\tau \mid (S(u) \mid v ) = (S(v) \mid u), \forall u,v \in \Gamma (\tau)  \right\} \,.
$$
Moreover, as $\Gamma_{\Delta_\tau}'(\tau)$, and $\Gamma_{\Delta_\tau}'''(\tau)$ are in the vector space $\overrightarrow{\Delta_\tau^\pitchfork}$, and $\Gamma_{\Delta_\tau}'(\tau)$ is non-degenerate by definition of Jacobi curve, we can define the operator $R_\Gamma \colon \Gamma (\tau) \to \Gamma (\tau)$ as:
\begin{equation}\label{eq:RGamma}
R_\Gamma (\tau) =  (\Gamma_{\Delta_\tau}'(\tau))^{-1}  \circ \Gamma_{\Delta_\tau}'''(\tau) \, ,
\end{equation}
that is, $R_\Gamma (\tau)$ is the operator making the following diagram commutative:

\begin{equation}\label{diagrama-curvatura}
\begin{tikzpicture} [every node/.style={midway}]
\matrix[column sep={8em,between origins},
        row sep={3em}] at (0,0)
{ \node(Gam)   {$\Gamma\left(\tau\right)$}  ; & \node(At) {$\Delta_{\tau}$} ; \\
 ; &  \node(Gam2) {$\Gamma\left(\tau\right)$}  ;    \\};
\draw[->] (Gam) -- (At) node[anchor=south]  {$\Gamma'''_{\Delta_{\tau}}(\tau)$};
\draw[->] (At) -- (Gam2) node[anchor=west]  {$\left(\Gamma'_{\Delta_{\tau}} (\tau)\right)^{-1}$};
\draw[->] (Gam)   -- (Gam2) node[anchor=east] {$R_{\Gamma}\left(\tau\right)$\,\,\,};
\end{tikzpicture}
\end{equation}


\begin{definition}\label{def:Ricci}
Let $\Gamma (t)$ be a Jacobi curve. The curve of linear maps $R_\Gamma (\tau) \colon \Gamma (\tau) \to \Gamma (\tau)$ defined by the previous diagram, (\ref{diagrama-curvatura}), that is, $R_\Gamma (\tau) = \left(\Gamma'_{\Delta_\tau}\right)^{-1} \circ \Gamma'''_{\Delta_\tau}$,
will be called the Ricci curvature tensor of the curve.  We will say that the (regular) Jacobi curve $\Gamma (t)$ is admissible if $R_\Gamma (\tau)$ is invertible.
\end{definition}

If we choose a symplectic basis $(\mathbf{e}, \overline{\mathbf{e}})$ such that the Jacobi curve $\Gamma(t)$ has the coordinate representation $\Gamma(t) = \left[ \begin{array}{c} I \\ S_t \end{array} \right]_{(\mathbf{e}, \overline{\mathbf{e}})}$, then the matrix $\mathbb{S}(S_\tau)$ associated to the linear map $R_\Gamma (\tau) \colon \Gamma (\tau) \to \Gamma (\tau)$ with respect to the basis $\mathbf{e} + \overline{\mathbf{e}} S_t$, will not depend whether we use the symplectic basis $(\mathbf{e}, \overline{\mathbf{e}})$ or any conformal symplectic basis $(\mathbf{e}_\lambda, \overline{\mathbf{e}}_\lambda)$ in the class of $[\omega]$, because the Lagrangian subspace $\Gamma (\tau)$ and the affine structure of $\Delta_\tau^\pitchfork$ does not, \textit{cfr.} Sect. \ref{sec:basis}.   This important observation implies that the Ricci curvature $R_\Gamma$ depends just on the conformal symplectic structure of the space $W$.

The Ricci curvature operator $R_\Gamma$ defined above coincides with the curvature operator introduced by Agrachev and Zelenko in \cite{Ag02}.
As a consequence of the computations in Lemma \ref{lem-derivative-1} it is possible to obtain its matrix representation from $\widetilde{S}_t$.    Deriving again in (\ref{eq-S-prima-2}) and taking into account (\ref{eq-B-C-D-tau}), we can write:
\[
\widetilde{S}'''_t  = C (D_t)^{-1}\left[S'''_t + 3S''_t (D_t)^{-1}S'_t + 3S'_t (D_t)^{-1}S''_t+ 6S'_t (D_t)^{-1}S'_t(D_t)^{-1}S'_t\right]\cdot\left((D_t)^{-1}B_t + \mathrm{Id}\right) 
\]
and for $t=\tau$, because of (\ref{eq-B-C-D-tau}) and (\ref{eq-Stau-A}), we get :
\begin{align}
\widetilde{S}'''_{\tau} & = CC^{-1}\left[S'''_{\tau} + 3S''_{\tau} C^{-1}S'_{\tau} + 3S'_{\tau} C^{-1}S''_{\tau}+ 6S'_{\tau} C^{-1}S'_{\tau}C^{-1}S'_{\tau}\right]\mathrm{Id} = \nonumber \\
& = S'''_{\tau} - \frac{3}{2}S''_{\tau} (S'_{\tau})^{-1}S''_{\tau} -\frac{3}{2}S''_{\tau} (S'_{\tau})^{-1}S''_{\tau}+ \frac{6}{4}S''_{\tau} (S'_{\tau})^{-1}S''_{\tau} = \nonumber  \\
& = S'''_{\tau} - \frac{3}{2}S''_{\tau} (S'_{\tau})^{-1}S''_{\tau} =   S'_{\tau}\cdot\left[(S'_{\tau})^{-1} S'''_{\tau} - \frac{3}{2}(S'_{\tau})^{-1}S''_{\tau} (S'_{\tau})^{-1}S''_{\tau}  \right]  = S'_{\tau}\cdot\mathbb{S}\left(S_{\tau} \right) \, , \label{eq-curvatura-R}
\end{align}
with $\mathbb{S}\left(S_{\tau} \right)$ denoting the Schwarzian derivative of the curve $S_\tau$ (that will be discussed at length in the coming section).  The matrices $\widetilde{S}'''_{\tau}$, $\widetilde{S}'_{\tau}$ correspond, respectively, to the coordinates of the maps $\Gamma'''_{\Delta_\tau}$ and $\Gamma'_{\Delta_\tau}$ so, using (\ref{eq-Sprima-tau}) and (\ref{eq-curvatura-R}), the expression of $R_{\Gamma}$ in the same coordinates is given by the matrix $\mathbb{S}(S_{\tau})$:

\begin{equation}\label{eq:Ricci_Schwarz}
 \mathbb{S}\left(S_{\tau} \right) =  (S'_{\tau})^{-1} S'''_{\tau} - \frac{3}{2}\left( (S'_{\tau})^{-1}S''_{\tau} \right)^2\, ,
\end{equation}
in full agreement with the results in \cite{Ag02}.

\begin{definition} For any non--singular matrix $S_t=S(t)\in \mathbb{R}^{k\times k}$, depending smoothly on $t\in I\subset \mathbb{R}$, the expression found in (\ref{eq:Ricci_Schwarz}) will be called the Schwarzian derivative of $S_t$.
\end{definition}

\subsubsection{The coordinate representation of the Ricci curvature: The Schwarzian derivative and the change of parameter formula:}

We will study first how the Schwarzian derivative $\mathbb{S}(S_t)$ of $S_t$ behaves under a change of parameter. 

\begin{lemma}\label{lem:changeS}
Let us consider two smooth functions $\psi:I\subset \mathbb{R} \rightarrow J\subset \mathbb{R}$ and $\varphi:J\subset \mathbb{R} \rightarrow I\subset \mathbb{R}$, which are inverse functions of each other, that is, they satisfy $t=\psi(\overline{t})$, and $\overline{t}=\varphi(t)$.
Then we have:
\begin{equation}\label{eq-schwarz-cambio}
\left.\mathbb{S}\left( \psi \right)\right|_{\overline{t}} = -\left( \frac{d\psi}{d\overline{t}}  \right)^2  \left.\mathbb{S}\left( \varphi \right)\right|_{t=\psi(\overline{t})} \, , 
\end{equation}
with $\left.\mathbb{S}\left( \psi \right)\right|_{\overline{t}} = \frac{\psi'''(\overline{t})}{\psi'(\overline{t})}-\frac{3}{2}\left( \frac{\psi''(\overline{t})}{\psi'(\overline{t})} \right)^2$.
\end{lemma}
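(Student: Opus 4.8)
The plan is to get everything out of the single identity $\varphi\circ\psi=\mathrm{id}$ by differentiating it three times and then substituting into the definition of the (scalar) Schwarzian. It helps to fix a convention at the start: $\psi$ and its derivatives are always evaluated at $\overline{t}$, while $\varphi$ and its derivatives are evaluated at $t=\psi(\overline{t})$. Since $\varphi$ and $\psi$ are mutually inverse smooth functions, $\psi'(\overline{t})\neq0$, so all the quotients appearing below are legitimate.

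First I would differentiate $\varphi(\psi(\overline{t}))=\overline{t}$ once, obtaining $\varphi'\psi'=1$, hence $\varphi'=1/\psi'$. A second differentiation gives $\varphi''(\psi')^2+\varphi'\psi''=0$, so $\varphi''=-\psi''/(\psi')^3$. A third differentiation yields $\varphi'''(\psi')^3+3\varphi''\psi'\psi''+\varphi'\psi'''=0$; substituting the two expressions already found and solving for $\varphi'''$ gives $\varphi'''=3(\psi'')^2/(\psi')^5-\psi'''/(\psi')^4$. These three identities are the only computations needed.

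The final step is to insert these into $\mathbb{S}(\varphi)\big|_{t}=\varphi'''/\varphi'-\tfrac{3}{2}(\varphi''/\varphi')^2$. One finds $\varphi''/\varphi'=-\psi''/(\psi')^2$ and $\varphi'''/\varphi'=3(\psi'')^2/(\psi')^4-\psi'''/(\psi')^3$, whence
\[
\mathbb{S}(\varphi)\big|_{t}=\frac{3}{2}\,\frac{(\psi'')^2}{(\psi')^4}-\frac{\psi'''}{(\psi')^3}=-\frac{1}{(\psi')^2}\left(\frac{\psi'''}{\psi'}-\frac{3}{2}\left(\frac{\psi''}{\psi'}\right)^2\right)=-\frac{1}{(\psi')^2}\,\mathbb{S}(\psi)\big|_{\overline{t}}\,,
\]
and multiplying through by $(\psi')^2=(d\psi/d\overline{t})^2$ and rearranging is exactly \eqref{eq-schwarz-cambio}. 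Equivalently, this is the $\varphi\circ\psi=\mathrm{id}$ specialization of the Schwarzian cocycle identity $\mathbb{S}(\varphi\circ\psi)=(\mathbb{S}(\varphi)\circ\psi)(\psi')^2+\mathbb{S}(\psi)$ combined with $\mathbb{S}(\mathrm{id})=0$, but I would prefer the direct route above, which uses nothing beyond the chain rule.

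I do not expect a genuine conceptual obstacle. The one delicate point is purely bookkeeping: remembering that each $\varphi^{(k)}$ must be read off at $t=\psi(\overline{t})$ and not at $\overline{t}$ when the chain rule is applied; this is precisely what produces the factor $(d\psi/d\overline{t})^2$ in the statement, and it is the only place a sign or a power could be lost.
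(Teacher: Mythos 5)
Your proposal is correct and follows essentially the same route as the paper: differentiate the inverse relation three times via the chain rule and substitute into the definition of the Schwarzian; the only (immaterial) difference is that you expand the derivatives of $\varphi$ in terms of those of $\psi$ and compute $\mathbb{S}(\varphi)$, whereas the paper expresses $\psi'$, $\psi''$, $\psi'''$ through the derivatives of $\varphi$ and computes $\mathbb{S}(\psi)$ directly.
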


\begin{proof}
Since 
\[
\displaystyle{\frac{d\psi}{d\overline{t}}(\overline{t}) = \left( \frac{d\varphi}{dt}(\psi(\overline{t})) \right)^{-1}}
\]
or equivalently, simplifying the notation 
\[
\psi'(\overline{t}) = \left.\frac{1}{\varphi'\left(t\right)}\right|_{t=\psi(\overline{t})}  .
\]
Then we have
\[
\psi''(\overline{t}) = \left.\frac{-\varphi''\left(t\right)\cdot \psi'(\overline{t})}{\left(\varphi'\left(t\right)\right)^2}\right|_{t=\psi(\overline{t})}
\]
\begin{align*}
\psi'''(\overline{t}) & = \left.\frac{-\varphi'''\left(t\right)\cdot \left(\psi'(\overline{t})\right)^2 \varphi'\left(t\right)- \varphi''\left(t\right)\cdot \psi''(\overline{t})\varphi'\left(t\right)+2\left(\varphi''\left(t\right)\right)^2 \left(\psi'(\overline{t})\right)^2}{\left(\varphi'\left(t\right)\right)^3}\right|_{t=\psi(\overline{t})} = \\
& = \left.\frac{-\varphi'''\left(t\right)\cdot \left(\psi'(\overline{t})\right)^2 \varphi'\left(t\right)+3\left(\varphi''\left(t\right)\right)^2 \left(\psi'(\overline{t})\right)^2}{\left(\varphi'\left(t\right)\right)^3}\right|_{t=\psi(\overline{t})} 
\end{align*}
Now, we can compute 
\begin{align*}
\left.\mathbb{S}\left( \psi \right)\right|_{\overline{t}} & = \frac{\psi'''(\overline{t})}{\psi'(\overline{t})}-\frac{3}{2}\left( \frac{\psi''(\overline{t})}{\psi'(\overline{t})} \right)^2 = \\
& = \left.\left(\frac{-\varphi'''\left(t\right)\cdot \left(\psi'(\overline{t})\right)^2 \varphi'\left(t\right)+3\left(\varphi''\left(t\right)\right)^2 \left(\psi'(\overline{t})\right)^2}{\left(\varphi'\left(t\right)\right)^2}-\frac{3}{2}\left( \frac{-\varphi''\left(t)\right)\cdot \psi'(\overline{t})}{\varphi'\left(t\right)} \right)^2 \right)\right|_{t=\psi(\overline{t})}= \\
& = \left.\left(-\frac{\varphi'''\left(t\right)}{\varphi'\left(t\right)}\cdot \left(\psi'(\overline{t})\right)^2 +\frac{3}{2}\frac{\left(\varphi''\left(t\right)\right)^2}{\left(\varphi'\left(t\right)\right)^2}\cdot \left(\psi'(\overline{t})\right)^2 \right)\right|_{t=\psi(\overline{t})}= \\
& = -\left(\psi'(\overline{t})\right)^2 \cdot \left.\mathbb{S}\left( \varphi \right)\right|_{t=\psi(\overline{t})}
\end{align*}
as we wanted to show.
\end{proof}

\begin{proposition}\label{prop-cambio-curvatura}
Let $t=\psi(\overline{t})$ be a change of parameter for a Jacobi curve $\overline{\Gamma}$, that is $\overline{\Gamma}=\overline{\Gamma}(\overline{t})=\Gamma(\psi(\overline{t}))$ such that $\overline{\Gamma}(\overline{t})\simeq \begin{bmatrix} \mathrm{Id} \\ \overline{S}_{\overline{t}} \end{bmatrix}$ and $\Gamma(t)\simeq \begin{bmatrix} \mathrm{Id} \\ S_{t} \end{bmatrix}$, are the corresponding expressions in coordinates. Then 
\[
\mathbb{S}\left( \overline{S}_{\overline{t}} \right) = \left( \frac{d\psi}{d\overline{t}}  \right)^2 \mathbb{S}( S_{\psi(\overline{t})} ) + \left.\mathbb{S}\left( \psi \right)\right|_{\overline{t}}\cdot \mathbf{\mathrm{Id}}
\]
\end{proposition}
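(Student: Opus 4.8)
The plan is to reduce the matrix identity to a purely scalar computation by exploiting the chain rule for the first derivative, together with the already-established scalar Schwarzian transformation law from Lemma \ref{lem:changeS}. First I would write $\overline{S}_{\overline{t}} = S_{\psi(\overline{t})}$ and differentiate repeatedly in $\overline{t}$, keeping track of the factors of $\psi' = d\psi/d\overline{t}$. Setting $t = \psi(\overline{t})$, one gets $\dot{\overline{S}} = \psi' \, S'_t$, then $\ddot{\overline{S}} = (\psi')^2 S''_t + \psi'' S'_t$, and $\dddot{\overline{S}} = (\psi')^3 S'''_t + 3\psi'\psi'' S''_t + \psi''' S'_t$, where dots denote $\overline{t}$-derivatives and primes denote $t$-derivatives. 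These are exactly the same expressions that appear in the scalar case, only with matrix-valued $S', S'', S'''$ which do not commute, so I must be careful never to reorder factors.

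Next I would substitute these into the definition $\mathbb{S}(\overline{S}_{\overline t}) = (\dot{\overline S})^{-1}\dddot{\overline S} - \tfrac32\big((\dot{\overline S})^{-1}\ddot{\overline S}\big)^2$, using $(\dot{\overline S})^{-1} = (\psi')^{-1}(S'_t)^{-1}$. The first term becomes
\[
(\psi')^{-1}(S'_t)^{-1}\Big((\psi')^3 S'''_t + 3\psi'\psi'' S''_t + \psi''' S'_t\Big)
= (\psi')^2 (S'_t)^{-1}S'''_t + 3\psi''(S'_t)^{-1}S''_t + \frac{\psi'''}{\psi'}\mathrm{Id},
\]
and the bracket $(\dot{\overline S})^{-1}\ddot{\overline S} = (\psi')(S'_t)^{-1}S''_t + \frac{\psi''}{\psi'}\mathrm{Id}$, whose square is
\[
(\psi')^2\big((S'_t)^{-1}S''_t\big)^2 + 2\psi''(S'_t)^{-1}S''_t + \Big(\frac{\psi''}{\psi'}\Big)^2\mathrm{Id}.
\]
Here the crucial point is that the cross term $\frac{\psi''}{\psi'}\mathrm{Id}$ commutes with everything, so the square expands without ambiguity; this is why no commutator corrections survive. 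Collecting terms, the $3\psi''(S'_t)^{-1}S''_t$ from the first term cancels against $\tfrac32\cdot 2\psi''(S'_t)^{-1}S''_t$ from the second, leaving
\[
\mathbb{S}(\overline{S}_{\overline t}) = (\psi')^2\Big((S'_t)^{-1}S'''_t - \tfrac32\big((S'_t)^{-1}S''_t\big)^2\Big) + \Big(\frac{\psi'''}{\psi'} - \tfrac32\Big(\frac{\psi''}{\psi'}\Big)^2\Big)\mathrm{Id}
= (\psi')^2\,\mathbb{S}(S_{\psi(\overline t)}) + \mathbb{S}(\psi)\big|_{\overline t}\cdot\mathrm{Id},
\]
which is exactly the claimed formula once one recognizes the scalar Schwarzian $\mathbb{S}(\psi)|_{\overline t} = \psi'''/\psi' - \tfrac32(\psi''/\psi')^2$ from Lemma \ref{lem:changeS}.

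The only real obstacle is bookkeeping: because $S', S'', S'''$ are non-commuting matrices one must verify that every cancellation that works in the scalar case still works here, i.e. that the surviving terms genuinely combine into $(S'_t)^{-1}S'''_t$ and $\big((S'_t)^{-1}S''_t\big)^2$ and that the leftover scalar pieces assemble into $\mathbb{S}(\psi)\mathrm{Id}$. This works precisely because every term carrying a $\psi$-derivative of order $\ge 2$ enters with a scalar coefficient times either $\mathrm{Id}$ or a single factor of $(S'_t)^{-1}S''_t$, so the non-commutativity never interferes with the cancellations; I would state this explicitly rather than leaving it implicit. One may optionally note that the result also follows more conceptually from the chain rule $R_{\overline\Gamma}(\overline t) = R_\Gamma(\psi(\overline t))$ applied to the intrinsic operator together with the change-of-parameter behaviour of the arc element, but the direct computation above is self-contained and matches the coordinate-level statement being proved.
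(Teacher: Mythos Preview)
Your proof is correct and follows essentially the same route as the paper: compute the three $\overline{t}$-derivatives of $\overline{S}_{\overline{t}}=S_{\psi(\overline{t})}$ via the chain rule, substitute into the definition of the matrix Schwarzian, and collect terms. Your explicit remark that the cross terms carry the scalar factor $\tfrac{\psi''}{\psi'}\mathrm{Id}$, so that non-commutativity of $S',S'',S'''$ never obstructs the cancellations, is a welcome clarification that the paper leaves implicit.
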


\begin{proof}
Since $\overline{S}_{\overline{t}}=S_{\psi(\overline{t})}$ then 
\begin{align}
\overline{S}'_{\overline{t}} & =\frac{d}{d\overline{t}}\overline{S}_{\overline{t}} = \psi'(\overline{t})S'_{\psi(\overline{t})}  \label{eq-cambio-1} \\
\overline{S}''_{\overline{t}} & =\frac{d^2}{d\overline{t}^2}\overline{S}_{\overline{t}} = \psi''(\overline{t})S'_{\psi(\overline{t})}+\left(\psi'(\overline{t})\right)^2 S''_{\psi(\overline{t})} \label{eq-cambio-2} \\
\overline{S}'''_{\overline{t}} & =\frac{d^3}{d\overline{t}^3}\overline{S}_{\overline{t}} = \psi'''(\overline{t})S'_{\psi(\overline{t})}+ 3\psi'(\overline{t})\psi''(\overline{t})S''_{\psi(\overline{t})}+\left(\psi'(\overline{t})\right)^3 S'''_{\psi(\overline{t})} \nonumber
\end{align}

So, we have
\begin{align*}
\mathbb{S}\left( \overline{S}_{\overline{t}} \right) &=  ( \overline{S}'_{\overline{t}} )^{-1} \overline{S}'''_{\overline{t}} - \frac{3}{2} \left( ( \overline{S}'_{\overline{t}} )^{-1} \overline{S}''_{\overline{t}} \right)^2 = \\
& = \left[ \frac{1}{\psi'(\overline{t})} ( S'_{\psi(\overline{t})} )^{-1} \left( \psi'''(\overline{t})S'_{\psi(\overline{t})}+ 3\psi'(\overline{t})\psi''(\overline{t})S''_{\psi(\overline{t})}+\left(\psi'(\overline{t})\right)^3 S'''_{\psi(\overline{t})}\right)\right] - \\
&- \frac{3}{2} \left[ \frac{1}{\psi'(\overline{t})} ( S'_{\psi(\overline{t})} )^{-1} \left(\psi''(\overline{t})S'_{\psi(\overline{t})}+\left(\psi'(\overline{t})\right)^2 S''_{\psi(\overline{t})} \right) \right]^2 = \\
& =  \left[ \frac{\psi'''(\overline{t})}{\psi'(\overline{t})} \cdot \mathbf{\mathrm{Id}} + 3\psi''(\overline{t})( S'_{\psi(\overline{t})} )^{-1}S''_{\psi(\overline{t})}+\left(\psi'(\overline{t})\right)^2 ( S'_{\psi(\overline{t})} )^{-1} S'''_{\psi(\overline{t})}\right] - \\
&- \frac{3}{2} \left[ \frac{\psi''(\overline{t})}{\psi'(\overline{t})} \cdot \mathbf{\mathrm{Id}} + \psi'(\overline{t})( S'_{\psi(\overline{t})} )^{-1} S''_{\psi(\overline{t})}  \right]^2 = \\
& = \left[ \frac{\psi'''(\overline{t})}{\psi'(\overline{t})} -\frac{3}{2}\left( \frac{\psi''(\overline{t})}{\psi'(\overline{t})}\right)^2 \right] \cdot \mathbf{\mathrm{Id}} + \left(\psi'(\overline{t})\right)^2 \left[ ( S'_{\psi(\overline{t})} )^{-1} S'''_{\psi(\overline{t})} - \frac{3}{2}\left( ( S'_{\psi(\overline{t})} )^{-1} S''_{\psi(\overline{t})}  \right)^2  \right]  = \\
&= \left.\mathbb{S}\left( \psi \right)\right|_{\overline{t}}\cdot \mathbf{\mathrm{Id}} + \left( \frac{d\psi}{d\overline{t}}  \right)^2 \mathbb{S}( S_{\psi(\overline{t})} )    
\end{align*}
as claimed.
\end{proof}

The result of Prop. \ref{prop-cambio-curvatura} tell us how the Ricci curvature $R_\Gamma (\tau)$ transforms under a change of parameter $t = \psi(\bar{t})$:
\begin{equation}\label{eq:curvatura-Rpar}
\boxed{
R_{\overline{\Gamma}}(\overline{t}) = \left( \frac{d\psi}{d\overline{t}}  \right)^2 \left.R_{\Gamma}\left(t\right)\right|_{\psi(\overline{t})} + \left.\mathbb{S}\left( \psi \right)\right|_{\overline{t}}\cdot \mathbf{\mathrm{Id}}
}
\end{equation}

\begin{remark}
Note that if $R_{\Gamma}(t)$ is diagonalisable, the same will be true for $R_{\overline{\Gamma}}(\overline{t})$.
\end{remark}

\subsection{The geometric arc parameter of a Jacobi curve}

\begin{definition} Given a Jacobi curve $\Gamma=\Gamma(t)\simeq\begin{bmatrix} I \\ S_t \end{bmatrix}$, we define its parametric Ricci curvature as the real valued function on the parameter $t$ of the Jacobi curve given by:
\[
\mathrm{Ric}_t = \mathrm{tr}\left( R_{\Gamma}(t) \right) = \mathrm{tr}\left( \mathbb{S}\left( S_t \right) \right)  .
\]
\end{definition}

\begin{definition}
Let $\Gamma (t)$ be a Jacobi curve and $\overline{\Gamma}(\overline{t}) = \Gamma(\psi (\overline{t}))$ a reparametrization of $\Gamma (t)$.  We will say that $\overline{t}$ is a projective parameter if 
\[
\overline{\mathrm{Ric}}_{\overline{t}} = \mathrm{tr}\left( R_{\overline{\Gamma}(\overline{t})} \right) \equiv 0   \, .
\]
\end{definition}

\begin{proposition}
Any Jacobi curve $\Gamma = \Gamma (t)$, has a projective parameter.
\end{proposition}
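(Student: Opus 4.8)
The plan is to reduce the statement to the solvability of a scalar linear second-order ODE. By Eq. (\ref{eq:curvatura-Rpar}), under a change of parameter $t=\psi(\overline{t})$ the Ricci curvature transforms as $R_{\overline{\Gamma}}(\overline{t}) = (\psi'(\overline{t}))^2\, R_{\Gamma}(\psi(\overline{t})) + \mathbb{S}(\psi)|_{\overline{t}}\cdot\mathrm{Id}$. Writing $n$ for the dimension of the Lagrangian subspaces, so that $\mathrm{tr}\,\mathrm{Id}=n$, and taking traces, we obtain $\overline{\mathrm{Ric}}_{\overline{t}} = (\psi'(\overline{t}))^2\,\mathrm{Ric}_{\psi(\overline{t})} + n\,\mathbb{S}(\psi)|_{\overline{t}}$. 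Hence $\overline{t}$ is a projective parameter precisely when $\mathbb{S}(\psi)|_{\overline{t}} = -\tfrac1n (\psi'(\overline{t}))^2\,\mathrm{Ric}_{\psi(\overline{t})}$.

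Next I would pass to the inverse function $\varphi=\psi^{-1}$ and apply Lemma \ref{lem:changeS}, which gives $\mathbb{S}(\psi)|_{\overline{t}} = -(\psi'(\overline{t}))^2\,\mathbb{S}(\varphi)|_{t=\psi(\overline{t})}$. Substituting, the condition collapses to $\mathbb{S}(\varphi)|_{t} = \tfrac1n\,\mathrm{Ric}_{t}$; that is, it suffices to find a local diffeomorphism $\varphi$ of the parameter interval $I$ whose scalar Schwarzian derivative equals the prescribed smooth function $q(t):=\tfrac1n\,\mathrm{Ric}_t$.

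For this I would invoke the classical construction. Let $y_1,y_2$ be a fundamental system of solutions of the linear equation $y'' + \tfrac12 q(t)\,y = 0$ on $I$, which exists by the existence and uniqueness theorem for linear ODEs. A short computation, using that the Wronskian $W=y_1 y_2' - y_1' y_2$ is a nonzero constant because the equation has no first-order term, shows that $\varphi:=y_1/y_2$ satisfies $\varphi' = -W/y_2^2 \neq 0$ wherever $y_2\neq 0$, and that $\mathbb{S}(\varphi) = -2\,y_2''/y_2 = q$. Since $y_2$ and $y_2'$ never vanish simultaneously, the zeros of $y_2$ are isolated, so $\varphi$ is a local diffeomorphism on an open dense subset of $I$; setting $\psi=\varphi^{-1}$ then yields the desired reparametrization with $\overline{\mathrm{Ric}}\equiv 0$.

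The only point requiring care --- rather than a genuine obstacle --- is that $\varphi$, being the ratio of two independent solutions, is a projectively valued coordinate: it is an honest parameter only away from the isolated zeros of $y_2$, and different choices of fundamental system give parameters related by a M\"obius transformation. Thus the projective parameter exists locally around each point of $I$ (and globally on an open dense set), which is exactly the content of the proposition; the one computation to carry out explicitly is the Schwarzian identity $\mathbb{S}(y_1/y_2)=2Q$ for solutions of $y''+Qy=0$.
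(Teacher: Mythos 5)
Your argument is correct and follows essentially the same route as the paper: use Eq. (\ref{eq:curvatura-Rpar}) together with Lemma \ref{lem:changeS} to reduce the claim to solving the scalar Schwarzian equation $\mathbb{S}(\varphi)=\tfrac{1}{n}\mathrm{Ric}_t$. The paper simply asserts that this equation can be solved, whereas you additionally justify it via the classical construction $\varphi=y_1/y_2$ with $y''+\tfrac{1}{2}q\,y=0$ (your identity $\mathbb{S}(y_1/y_2)=-2y_2''/y_2=q$ checks out) and correctly flag that this yields a projective parameter only locally, away from the isolated zeros of $y_2$ --- a point the paper leaves implicit.
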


\begin{proof}
Given the Jacobi curve $\Gamma (t)$, consider the reparametrization $\bar{t} = \varphi (t)$, obtained solving the Schwarzian equation:
$$
\mathbb{S}(\varphi (t)) = \frac{1}{n} \mathrm{Ric}_\Gamma (t) \, ,
$$
then, because of Eq. (\ref{eq:curvatura-Rpar}) and Lem. \ref{lem:changeS}, Eq. (\ref{eq-schwarz-cambio}), we get immediately, that $\overline{\mathrm{Ric}}_{\bar{t}} = 0$.
\end{proof}

If $\bar{t}$ is a projective parameter for the Jacobi curve $\Gamma$, taking the trace in Eq. (\ref{eq:curvatura-Rpar}), we get: 
\[
0 = \left( \frac{d\psi}{d\overline{t}}  \right)^2 \mathrm{Ric}_{\psi(\overline{t})} + n \left.\mathbb{S}\left( \psi \right)\right|_{\overline{t}} \, ,
\]
whence:
\[
\left.\mathbb{S}\left( \psi \right)\right|_{\overline{t}} = -\frac{1}{n} \left( \frac{d\psi}{d\overline{t}}  \right)^2 \mathrm{Ric}_{\psi(\overline{t})} 
\]
and Eq. (\ref{eq:curvatura-Rpar}) becomes:
\begin{equation} \label{eq-curvatura-Rpar-2}
\boxed{
R_{\overline{\Gamma}}(\overline{t}) = \left( \frac{d\psi}{d\overline{t}}  \right)^2 \left[ R_{\Gamma}\left(\psi(\overline{t})\right) -\frac{1}{n} \mathrm{Ric}_{\psi(\overline{t})} \cdot \mathbf{\mathrm{Id}}\right]  .
}
\end{equation}

Given the projective parameter $\overline{t}$, we can define the geometric arc parameter $s_{\overline{\Gamma}}$, recall Sect. \ref{sec:geometric_arc}, as:
\[
ds_{\overline{\Gamma}} = \overline{\zeta}(\overline{t}) d\overline{t} \quad \text{ with } \quad \overline{\zeta}(\overline{t})=\sqrt[2n]{\left| \mathrm{det}\left(R_{\overline{\Gamma}}(\overline{t})\right) \right|} \, .
\]
Let us observe first, that the geometric arc parameter $s_{\overline{\Gamma}}$ given above is well defined and does not depend on the projective parameter $\bar{t}$ used to define it.  Indeed,  if $\bar{t}$, and $t = \psi (\bar{t})$, are projective parameters, then $\mathbb{S}(\psi) = 0$, and we get:
$$
\psi^* (ds_\Gamma) = \psi^*\left( \sqrt[2n]{\det R_\Gamma (t)} dt  \right) = \sqrt[2n]{\det R_\Gamma (\psi(\bar{t}))} \,\, \frac{d\psi}{d\bar{t}} d\bar{t}  \, ,
$$
but then, using (\ref{eq:curvatura-Rpar}), with $\mathbb{S}(\psi) = 0$ (the parameter $t$ is projective by assumption), we get: 
$$
\sqrt[2n]{\det R_\Gamma (\psi(\bar{t}))} \frac{d\psi}{d\bar{t}} d\bar{t} = \sqrt[2n]{\det \left(\frac{d\psi}{d\bar{t}}\right)^{-2} R_{\overline{\Gamma}} (\bar{t})} \, \,  \frac{d\psi}{d\bar{t}} d\bar{t} = \sqrt[2n]{\det R_{\overline{\Gamma}} (\bar{t})} d\bar{t}  = ds_{\overline{\Gamma}}\, , 
$$
with $\overline{\Gamma}(\bar{t}) = \Gamma (\psi (\bar{t}))$.

Then the geometric arc element depending on any parameter $t$ such that $t=\psi(\overline{t})$, where $\overline{t}=\varphi(t)$ is a projective parameter, can be computed easily\footnote{Without any lack of generality we may assume that the change of parameter preserves the orientation, that is, $\frac{d\psi}{d\overline{t}} >0$.}:
\begin{align*}
ds_{\Gamma} (t) & = \left. \varphi^{\ast}\left( ds_{\overline{\Gamma}} \right)\right|_{t} = \\
& = \overline{\zeta}(\varphi(t))~\frac{d\varphi}{dt}(t)~dt = \\
& = \sqrt[2n]{\left| \mathrm{det}\left(R_{\overline{\Gamma}}(\overline{t})\right) \right|} ~\frac{d\varphi}{dt}(t)~dt = \\
& = \sqrt[2n]{\left| \mathrm{det}\left(\left( \frac{d\psi}{d\overline{t}}(\varphi(t))  \right)^2 \left[ R_{\Gamma}(t) -\frac{1}{n} \mathrm{Ric}_{t} \cdot \mathbf{\mathrm{Id}}\right]\right) \right|} ~\frac{d\varphi}{dt}(t)~dt = \\
& = \sqrt[2n]{\left| \mathrm{det}\left( R_{\Gamma}(t) -\frac{1}{n} \mathrm{Ric}_{t} \cdot \mathbf{\mathrm{Id}}\right) \right|} ~\frac{d\psi}{d\overline{t}}(\varphi(t))~\frac{d\varphi}{dt}(t)~dt = \\
& = \sqrt[2n]{\left| \mathrm{det}\left( R_{\Gamma}(t) -\frac{1}{n} \mathrm{Ric}_{t} \cdot \mathbf{\mathrm{Id}}\right) \right|} ~dt 
\end{align*}
that is,
\begin{equation}\label{eq-zeta}
\boxed{ ds_{\Gamma} = \zeta(t) dt  \quad \text{ where } \quad \zeta(t)=\sqrt[2n]{\left| \mathrm{det}\left( R_{\Gamma}(t) -\frac{1}{n} \mathrm{Ric}_{t} \cdot \mathbf{\mathrm{Id}}\right) \right|}  \, \, . }
\end{equation}

Notice that, if $\overline{\Gamma}=\overline{\Gamma}(s)$ is parametrized by the geometric arc parameter $s$ such that $t=\psi(s)$ and $s=\varphi(t)$, then 
\[
\frac{d\psi}{d\overline{t}}(\varphi(t)) = \left( \frac{d\varphi}{dt}(t) \right)^{-1} = \frac{1}{\zeta(t)}
\]
and therefore, by the equations (\ref{eq-schwarz-cambio}) and (\ref{eq-curvatura-Rpar-2}), the expressions of the curvature operator and the Ricci curvature for a Jacobi curve parametrized by the geometric arc parameter $s$ can be written, in terms of any parameter $t$, as follows:
\begin{align}
& \boxed{  R_{\overline{\Gamma}}(\varphi(t)) = \frac{1}{\zeta^{2}(t)}  \left[ R_{\Gamma}(t) -\left.\mathbb{S}\left(\varphi\right)\right|_{t} \cdot \mathbf{\mathrm{Id}}\right]  } \label{eq-curv-arco} \\
& \boxed{ \overline{\mathrm{Ric}}_{\varphi(t)} =\frac{1}{\zeta^{2}(t)}  \left[ \mathrm{Ric}_{t} - n  \left.\mathbb{S}\left(\varphi\right)\right|_{t}\right] \label{eq-curv-ricci} } \\
& \boxed{ \mathbb{S}(\varphi) = \frac{\zeta''}{\zeta} - \frac{3}{2} \left( \frac{\zeta'}{\zeta}\right)^2\label{eq-curv-S} }
\end{align}
with $\zeta (t)$ given by (\ref{eq-zeta}). 

\begin{definition}\label{def:absolute_curvature}
With the previous conventions and notations, the operator given in Eq. (\ref{eq-curv-arco}), will be called the absolute curvature operator of the Jacobi curve $\Gamma (t)$ and it will be denoted as $\mathcal{R}_\Gamma(t)$, i.e., 
\begin{equation}\label{eq:absolute_curvature}
\mathcal{R}_\Gamma(t) = \frac{1}{\zeta^{2}(t)}  \left[ R_{\Gamma}(t) -\left.\mathbb{S}\left(\varphi\right)\right|_{t} \cdot \mathbf{\mathrm{Id}}\right] 
\end{equation}
\end{definition}

\begin{remark}\label{rmk-zeta_1}
Since $d s_{\overline{\Gamma}} = ds$, because of (\ref{eq-zeta}), we get:
\[
\left| \mathrm{det}\left( \mathcal{R}_\Gamma(s) -\frac{1}{n} \overline{\mathrm{Ric}}_{s} \cdot \mathbf{\mathrm{Id}}\right) \right| = 1 \, .
\]
\end{remark}


\subsection{Symmetry  of the Ricci curvature}

Because of Prop. \ref{prop-bilineal-tangente}, we identify the tangent vector $\Gamma'(t)$ of the curve $\Gamma$ with the corresponding symmetric bilinear form $\dot{\Gamma}(0)$, so we will denote $\Gamma'(t)\in \mathcal{S}\left(\Gamma(t)\right)$.
It will be helpful to see $\Gamma'(t)$ as a map 
\[
\Gamma'(t):\Gamma(t) \rightarrow \Gamma(t)^{\ast}
\] 
where $\Gamma(t)^{\ast}$ denotes the dual space of $\Gamma(t)$, because denoting by $\langle u,v \rangle_{\Gamma'(t)} \equiv\Gamma'(t)\left(u,v\right)$ the bilinear form $\Gamma'(t)$ then 
\[
\Gamma'(t)(u)= \langle u,\cdot \rangle_{\Gamma'(t)}\in\Gamma(t)^{\ast}  .
\]

Also observe that, since the derivative point $\Delta_\tau$ corresponding to $\Gamma(\tau)$ is transversal to $\Gamma(\tau)$, that is $\Delta_\tau\in \Gamma(\tau)^{\pitchfork}$, then the map $\Delta_\tau \rightarrow \Gamma(\tau)^{\ast}$ defined by $u\mapsto \left.\omega(u,\cdot)\right|_{\Gamma(\tau)} = (u\mid \cdot)_{\Gamma(\tau)}$, is a linear isomorphism, and we can write the bilinear form $\langle \cdot ,\cdot \rangle_{\Gamma'(t)}\in\Gamma(t)^{\ast}$ in terms of the symplectic form $\omega$ as (recall Eq. (\ref{eq:vector_space})):
\[
\langle u,v \rangle_{\Gamma'(t)} = \left(\Gamma'(t)(u) \mid v \right) = \left(\Gamma'(t)(v) \mid u \right)
\]
where we identify the form $\Gamma'(t)(u)\in\Gamma(t)^{\ast}$ with its corresponding vector in $\Delta_\tau$.

\begin{theorem}\label{thm:symmetric}
The Ricci curvature operator $R_\Gamma(\tau) :\Gamma(\tau)\rightarrow \Gamma(\tau)$ is symmetric with respect to the symmetric bilinear form $\Gamma'(\tau)$.
\end{theorem}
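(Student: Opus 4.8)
The plan is to translate the statement into a single matrix identity using the coordinate machinery already in place, and then to observe that the matrix in question is patently symmetric. Fix a symplectic basis $(\mathbf{e},\overline{\mathbf{e}})$ for which $\Gamma(t)\simeq\begin{bmatrix} \mathrm{Id} \\ S_t \end{bmatrix}$, work in the basis $\boldsymbol{\epsilon}(\tau)=\mathbf{e}+\overline{\mathbf{e}}S_\tau$ of $\Gamma(\tau)$, and (as in the proof of Theorem \ref{thm-curva-derivativa}) choose $\overline{\Lambda}$ so that $\Delta_\tau\in\overline{\Lambda}^{\pitchfork}$. By Remark \ref{rem:Sprime}, the bilinear form $\Gamma'(\tau)=\dot\Gamma(\tau)$ on $\Gamma(\tau)$ is represented in this basis by the symmetric matrix $S'_\tau$; by the commutative diagram (\ref{diagrama-curvatura}) together with the computation (\ref{eq-curvatura-R}), the operator $R_\Gamma(\tau)$ is represented in the same basis by $\mathbb{S}(S_\tau)$, \textit{cfr.} (\ref{eq:Ricci_Schwarz}).

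Next I would invoke the elementary fact that an endomorphism $T$ of a finite-dimensional space is symmetric with respect to a symmetric bilinear form $B$ precisely when, in any basis, the product $\mathbf{B}\,\mathbf{T}$ of the Gram matrix $\mathbf{B}$ of $B$ by the matrix $\mathbf{T}$ of $T$ is symmetric: symmetry of $T$ reads $\mathbf{T}^{T}\mathbf{B}=\mathbf{B}\mathbf{T}$, while $(\mathbf{B}\mathbf{T})^{T}=\mathbf{T}^{T}\mathbf{B}$ because $\mathbf{B}$ is symmetric. Applying this with $\mathbf{B}=S'_\tau$ and $\mathbf{T}=\mathbb{S}(S_\tau)$, the theorem reduces to the assertion that $S'_\tau\,\mathbb{S}(S_\tau)$ is a symmetric matrix. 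But by (\ref{eq-curvatura-R}) we have $S'_\tau\,\mathbb{S}(S_\tau)=\widetilde{S}'''_{\tau}=S'''_{\tau}-\frac{3}{2}S''_{\tau}(S'_{\tau})^{-1}S''_{\tau}$, and both summands are symmetric: $S'''_{\tau}$ is a derivative of the symmetric matrix $S_t$, and $\bigl(S''_{\tau}(S'_{\tau})^{-1}S''_{\tau}\bigr)^{T}=S''_{\tau}(S'_{\tau})^{-1}S''_{\tau}$ since $S''_{\tau}$ and $(S'_{\tau})^{-1}$ are symmetric. This completes the argument.

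I would also record the coordinate-free reason, which makes the phenomenon transparent: by the discussion preceding the theorem, the operator $\Gamma'(\tau)\colon\Gamma(\tau)\to\Gamma(\tau)^{*}$ attached to $\dot\Gamma(\tau)$ is, under the isomorphism $\Gamma(\tau)^{*}\cong\Delta_\tau$ given by $u\mapsto(u\mid\cdot)|_{\Gamma(\tau)}$, exactly the map $\Gamma'_{\Delta_\tau}(\tau)\colon\Gamma(\tau)\to\Delta_\tau$; hence, by the defining relation (\ref{eq:RGamma}), $\Gamma'(\tau)\circ R_\Gamma(\tau)=\Gamma'''_{\Delta_\tau}(\tau)$ as a map $\Gamma(\tau)\to\Delta_\tau$, and since $\Gamma'''_{\Delta_\tau}(\tau)\in\overrightarrow{\Delta_\tau^{\pitchfork}}=\mathcal{S}(\Gamma(\tau),\Delta_\tau)$, recall (\ref{eq:vector_space}), the form $(u,v)\mapsto\bigl(\Gamma'''_{\Delta_\tau}(\tau)(u)\mid v\bigr)=\dot\Gamma(\tau)\bigl(R_\Gamma(\tau)u,v\bigr)$ is symmetric in $u,v$. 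The one step that genuinely demands care — equivalently, the content hidden in the coordinate argument through (\ref{eq-Sprima-tau}), i.e. $\widetilde{S}'_\tau=S'_\tau$, and Remark \ref{rem:Sprime} — is this identification of the tangent vector $\Gamma'(\tau)$, read off from the symplectic form via Proposition \ref{prop-bilineal-tangente}, with $\Gamma'_{\Delta_\tau}(\tau)$, read off from the affine structure of $\Delta_\tau^{\pitchfork}$; everything else is routine bookkeeping, so in the write-up I would present the coordinate argument in full and relegate the invariant one to a remark.
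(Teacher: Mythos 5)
Your proposal is correct, but your \emph{main} argument takes a different route from the paper's. The paper proves the theorem intrinsically: it observes that every derivative $\Gamma^{(i)}_{\Delta_\tau}(\tau)$, being a vector in $\overrightarrow{\Delta_\tau^{\pitchfork}}=\mathcal{S}(\Gamma(\tau),\Delta_\tau)$, satisfies $\bigl(\Gamma^{(i)}(\tau)(u)\mid v\bigr)+\bigl(u\mid\Gamma^{(i)}(\tau)(v)\bigr)=0$, and then chains this through the diagram (\ref{diagrama-curvatura}) to get $\langle R_\Gamma(\tau)u,v\rangle_{\Gamma'(\tau)}=\bigl(\Gamma'''_{\Delta_\tau}(\tau)(u)\mid v\bigr)=\bigl(\Gamma'''_{\Delta_\tau}(\tau)(v)\mid u\bigr)=\langle u,R_\Gamma(\tau)v\rangle_{\Gamma'(\tau)}$ — i.e.\ exactly the argument you relegate to a closing remark, including the identification of $\Gamma'(\tau)\colon\Gamma(\tau)\to\Gamma(\tau)^{*}$ with $\Gamma'_{\Delta_\tau}(\tau)$ via $\Delta_\tau\cong\Gamma(\tau)^{*}$, which the paper also sets up (somewhat tersely) in the paragraph preceding the theorem. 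Your primary argument instead reduces everything to the matrix identity: with $\mathbf{B}=S'_\tau$ (Remark \ref{rem:Sprime}) and $\mathbf{T}=\mathbb{S}(S_\tau)$ (the stated matrix of $R_\Gamma(\tau)$ in the basis $\mathbf{e}+\overline{\mathbf{e}}S_\tau$), symmetry is equivalent to symmetry of $S'_\tau\,\mathbb{S}(S_\tau)=S'''_\tau-\tfrac32 S''_\tau(S'_\tau)^{-1}S''_\tau$, which is manifest; this is a valid and fully elementary check, and it has the virtue of being verifiable by hand from (\ref{eq-curvatura-R}) alone. What it costs is that it leans on the coordinate identification of $R_\Gamma$'s matrix (hence on the choice of a transversal $\overline{\Lambda}$ and on (\ref{eq-Sprima-tau})), whereas the paper's intrinsic argument is basis-free and makes clear that the symmetry is forced by the affine-structure definition of $R_\Gamma$ (indeed the same argument would give symmetry of $(\Gamma'_{\Delta_\tau})^{-1}\circ\Gamma^{(i)}_{\Delta_\tau}$ for any $i$). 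Your plan to present the coordinate computation as the main text and the invariant argument as a remark simply inverts the paper's emphasis; both are sound.
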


\begin{proof}
Let us take into account that the linear maps $\Gamma^{(i)}(\tau):\Gamma(\tau)\rightarrow \Delta_\tau$ given by the derivatives of the curve $\Gamma$ for $i\in\mathbb{N}$ verify that:
\[
\left( \Gamma^{(i)}(\tau)(u) \mid v \right) + \left( u \mid \Gamma^{(i)}(\tau)(v) \right)=0
\]
and, consequently:
\[
\left( \Gamma'''(\tau)(u) \mid  v \right) = \left(\Gamma'''(\tau)(v) \mid u \right)  .
\]
By diagram \ref{diagrama-curvatura}, we get:
\begin{align*}
\langle R_\Gamma(\tau) (u),v \rangle_{\Gamma'(t)} & = \left(\Gamma'(t)\left(\mathcal{R}_{\Gamma(\tau)}(u)\right) \mid v \right)=\left(\Gamma'''(\tau)(u)\mid v \right) = \\
& =  \left(\Gamma'''(\tau)(v) \mid u \right)= \langle R_\Gamma(\tau) (v),u \rangle_{\Gamma'(t)} = \\
& = \langle u,R_\Gamma(\tau) (v) \rangle_{\Gamma'(t)}
\end{align*}
as claimed.
\end{proof}

\begin{corollary}\label{cor-R-diagonal}
The Ricci curvature operator $R_\Gamma(\tau) :\Gamma(\tau)\rightarrow \Gamma(\tau)$ is diagonalizable, i.e. there exists a basis of eigenvectors $\mathbf{f}=\left( \mathbf{f}_1,\ldots ,\mathbf{f}_m \right)$ of $\Gamma(\tau)$.
\end{corollary}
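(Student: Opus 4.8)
The plan is to derive the corollary directly from Theorem~\ref{thm:symmetric} together with the real spectral theorem. The velocity $\dot{\Gamma}(\tau)=\Gamma'(\tau)$ is a \emph{definite} symmetric bilinear form on the finite-dimensional real space $\Gamma(\tau)$ for the Jacobi curves considered here; after replacing it by $-\Gamma'(\tau)$ if necessary, it becomes a genuine inner product $\langle\,\cdot\,,\,\cdot\,\rangle_{\Gamma'(\tau)}$ on $\Gamma(\tau)$. By Theorem~\ref{thm:symmetric} the operator $R_\Gamma(\tau)\colon\Gamma(\tau)\to\Gamma(\tau)$ is self-adjoint with respect to $\Gamma'(\tau)$, hence with respect to this inner product, and the finite-dimensional real spectral theorem then yields an orthonormal basis of $\Gamma(\tau)$ made of eigenvectors of $R_\Gamma(\tau)$, with real eigenvalues. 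This basis is exactly the one asserted in the statement.

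The step I expect to be the genuine obstacle — really the only substantive point — is justifying the passage ``$\Gamma'(\tau)$-self-adjoint $\Rightarrow$ diagonalizable''. For a merely non-degenerate bilinear form this implication fails (a nilpotent Jordan block is self-adjoint for a split symmetric form), so the argument must use that $\dot{\Gamma}(\tau)$ is definite, and not only non-degenerate. By Remark~\ref{rem:Sprime}, in a symplectic basis adapted to the curve this definiteness amounts to $S'_\tau$ being a definite matrix, which is precisely the standing hypothesis imposed on Jacobi curves; hence the hypotheses of the spectral theorem are in force and the deduction goes through.

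As an alternative that bypasses Theorem~\ref{thm:symmetric}, one can argue at the matrix level: choose a symplectic basis in which $\Gamma(t)$ has coordinates $S_t$ with $S'_\tau$ positive definite (the negative definite case being analogous), so that $R_\Gamma(\tau)$ is represented by the Schwarzian matrix $\mathbb{S}(S_\tau)$ of~(\ref{eq:Ricci_Schwarz}). Conjugating by $(S'_\tau)^{1/2}$ and setting $P=(S'_\tau)^{-1/2}S''_\tau(S'_\tau)^{-1/2}$, a short computation gives $(S'_\tau)^{1/2}\,\mathbb{S}(S_\tau)\,(S'_\tau)^{-1/2}=(S'_\tau)^{-1/2}S'''_\tau(S'_\tau)^{-1/2}-\tfrac{3}{2}\,P^{2}$, which is symmetric because $S'''_\tau$ is symmetric and $P$ is; thus $\mathbb{S}(S_\tau)$, and hence $R_\Gamma(\tau)$, is similar to a symmetric matrix and therefore orthogonally diagonalizable. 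I would present the intrinsic argument first, since it makes the conformal invariance transparent, and record the matrix computation only as a remark.
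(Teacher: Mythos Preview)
Your proposal is correct and follows the same (implicit) approach as the paper: the corollary is stated there without proof immediately after Theorem~\ref{thm:symmetric}, the intended argument being self-adjointness with respect to $\Gamma'(\tau)$ plus the real spectral theorem for a definite inner product. You are in fact more careful than the paper in flagging that definiteness of $\dot{\Gamma}(\tau)$---not mere non-degeneracy---is what makes the spectral theorem applicable; the alternative matrix-level computation you offer is a valid coordinate check but does not appear in the paper.
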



\section{Cartan theory of Jacobi curves}\label{sec:Cartan}

This section will be devoted to the development of a Cartan-like theory of moving frames describing the structure of Jacobi curves.   

\subsection{Moving frames and the Cartan matrix of a Jacobi curve}\label{sec:moving_frames}

Let  $\left( \mathbf{e},\overline{\mathbf{e}} \right)=\left( \mathbf{e}_1,\ldots ,\mathbf{e}_m,\overline{\mathbf{e}}_1,\ldots,\overline{\mathbf{e}}_m \right)$ be a symplectic basis of $\left(W,\omega\right)$.   Now, consider a Jacobi curve $\Gamma=\Gamma(\tau)$ such that $\Gamma(\tau)\simeq \left[ \begin{matrix} I \\ S_{\tau} \end{matrix} \right] $ where $\tau$ is the geometric arc parameter. 
Let us assume that $\Gamma(\tau)$ is such that $S'_{\tau}$ is positive or negative definite. 
Then, by Corollary \ref{cor-R-diagonal}, there exists a basis $\mathbf{f}=\left( \mathbf{f}_1,\ldots ,\mathbf{f}_m \right)$ of eigenvectors of the curvature operator $R_\Gamma\left(\tau\right) :\Gamma\left(\tau\right) \rightarrow \Gamma\left(\tau\right)$ orthonormal with respect to $S'_{\tau}$.     By Lemma \ref{lema-base-simplectica}, there exists a complementary basis $\overline{\mathbf{f}}=\left(\overline{\mathbf{f}}_1,\ldots,\overline{\mathbf{f}}_m \right)$ of $\Delta_\tau\in\mathscr{L}(W)$ such that $\left(\mathbf{f},\overline{\mathbf{f}}\right)$ is symplectic basis of $W$. Then, in virtue of diagram (\ref{diagrama-curvatura}), we write, \textit{cfr.} (\ref{eq:sym_basis}):
\begin{equation}\label{eq-f-fbarra}
\left\{\begin{matrix} \mathbf{f}=\mathbf{e}~M_{\tau}+\overline{\mathbf{e}}~S_{\tau}~M_{\tau} \\ \overline{\mathbf{f}}=\mathbf{e}~\overline{M}_{\tau}+\overline{\mathbf{e}}~S^0_{\tau}~\overline{M}_{\tau} \end{matrix} \right.
\end{equation} 
where, recall (\ref{eq-base-simplectica}):
\begin{equation}\label{eq-Mbarra}
\overline{M}_{\tau}=(S^0_{\tau}-S_{\tau})^{-1}\left(M_{\tau}^{T}\right)^{-1}
\end{equation}
and $S^0_{\tau}$ is the derivative curve of $\Gamma(s)$ given by equation (\ref{eq-A-tau}). 
Therefore:
\begin{equation}\label{eq:e-f}
\left(\mathbf{f},\overline{\mathbf{f}}\right)=\left(\mathbf{e},\overline{\mathbf{e}}\right)\mathbf{P}_{\tau}
\end{equation}
where $\mathbf{P}_{\tau}\in \mathbb{R}^{2m\times 2m}$ has the form: 
\begin{equation}\label{eq-matriz-P}
\mathbf{P}_{\tau} = \begin{pmatrix}
M_{\tau} & \overline{M}_{\tau} \\
S_{\tau} M_{\tau} & S^0_{\tau} \overline{M}_{\tau}
\end{pmatrix}   \, ,
\end{equation}
and we will call the symplectic basis $F = (\mathbf{f}, \overline{\mathbf{f}})$ a symplectic Frenet basis for $\Gamma$.

Deriving the equation (\ref{eq:e-f}) with respect to $\tau$, we get:
\begin{equation}\label{eq-matriz-Pprima}
\left(\mathbf{f}',\overline{\mathbf{f}}'\right)=\left(\mathbf{e},\overline{\mathbf{e}}\right)\mathbf{P}'_{\tau} = \left(\mathbf{f},\overline{\mathbf{f}}\right)\mathbf{P}^{-1}_{\tau} \mathbf{P}'_{\tau}
\end{equation}
where $\mathbf{f}' = d \mathbf{f} /d\tau$, $\overline{\mathbf{f}}' = d \overline{\mathbf{f}} /d \tau$, $\mathbf{P}'_{\tau} = d \mathbf{P}_{\tau} / d\tau $, and:
\begin{equation}\label{eq:Cartan_gen}
\mathbf{C}_{\tau} = \mathbf{P}^{-1}_{\tau} \mathbf{P}'_{\tau}   
\end{equation}
is the \emph{Cartan matrix} of the moving frame $\left(\mathbf{f},\overline{\mathbf{f}}\right)$.

\subsection{The structure of the Cartan matrix}

Observe that the matrix (\ref{eq-matriz-P}) can be factorized as 
\[
\mathbf{P}_{\tau} = \mathbf{U}_{\tau} \mathbf{V}_{\tau}
\]
where
\[
\mathbf{U}_{\tau} = \begin{pmatrix}
I & I \\
S_{\tau}  & S^0_{\tau} 
\end{pmatrix}   \quad \text{ and } \qquad 
\mathbf{V}_{\tau} = \begin{pmatrix}
M_{\tau} & 0 \\
0  & \overline{M}_{\tau} 
\end{pmatrix}   .
\] 

The inverse of $\mathbf{P}_{\tau}$ is 
\[
\mathbf{P}^{-1}_{\tau}= \mathbf{V}^{-1}_{\tau} \mathbf{U}^{-1}_{\tau} = \begin{pmatrix}
M^{-1}_{\tau} & 0 \\
0  & \overline{M}^{-1} _{\tau}
\end{pmatrix}
\cdot \begin{pmatrix}
(S^0_{\tau}-S_{\tau} )^{-1}S^0_{\tau} & -(S^0_{\tau}-S_{\tau} )^{-1} \\
-(S^0_{\tau}-S_{\tau} )^{-1}S_{\tau} & (S^0_{\tau}-S_{\tau} )^{-1}
\end{pmatrix} 
\] 
and its derivative with respect to $\tau$ is 
\[
\mathbf{P}'_{\tau}= \mathbf{U}'_{\tau} \mathbf{V}_{\tau}+ \mathbf{U}_{\tau} \mathbf{V}'_{\tau}
\] 
whence 
\begin{equation}\label{eq-matriz-C}
\boxed{ \mathbf{C}_{\tau}= \mathbf{P}^{-1}_{\tau}\mathbf{P}'_{\tau}= \mathbf{V}^{-1}_{\tau} \mathbf{U}^{-1}_{\tau}\mathbf{U}'_{\tau} \mathbf{V}_{\tau} + \mathbf{V}^{-1}_{\tau} \mathbf{V}'_{\tau}   } \, .
\end{equation}

Let us compute the expression of $\mathbf{C}_{\tau}$ depending on $S_{\tau}$ and its derivatives. 

By equation (\ref{eq-Mbarra}), we have that 
\begin{align}
\overline{M}_{\tau}^{-1} & = M_{\tau}^{T}(S^0_{\tau}-S_{\tau}) \label{eq-Mbarra-1}\\
\overline{M}'_{\tau} & = -(S^0_{\tau}-S_{\tau})^{-1}\left[ (S^0_{\tau}-S_{\tau})'(S^0_{\tau}-S_{\tau})^{-1} + (M^{T}_{\tau})^{-1}(M^{T}_{\tau})' \right] (M^{T}_{\tau})^{-1} \nonumber \\
\overline{M}_{\tau}^{-1} \overline{M}'_{\tau} & = - M_{\tau}^{T}(S^0_{\tau}-S_{\tau})'(S^0_{\tau}-S_{\tau})^{-1}(M^{T}_{\tau})^{-1} - (M^{T}_{\tau})'(M^{T}_{\tau})^{-1}  \label{eq-Mbarra-2} .
\end{align}

Moreover, since
\begin{align*}
\mathbf{U}^{-1}_{\tau}\mathbf{U}'_{\tau}& = 
\begin{pmatrix}
(S^0_{\tau}-S_{\tau} )^{-1}S^0_{\tau} & -(S^0_{\tau}-S_{\tau} )^{-1} \\
-(S^0_{\tau}-S_{\tau} )^{-1}S_{\tau} & (S^0_{\tau}-S_{\tau} )^{-1}
\end{pmatrix}
 \begin{pmatrix}
0 & 0 \\
S'_{\tau}  & (S^0_{\tau})'
\end{pmatrix} = \\
& = \begin{pmatrix}
-(S^0_{\tau}-S_{\tau} )^{-1}S'_{\tau} & -(S^0_{\tau}-S_{\tau} )^{-1}(S^0_{\tau})' \\
(S^0_{\tau}-S_{\tau} )^{-1}S'_{\tau} & (S^0_{\tau}-S_{\tau} )^{-1} (S^0_{\tau})'
\end{pmatrix} 
\end{align*}
then, using equations (\ref{eq-Mbarra}), (\ref{eq-Mbarra-1}) and (\ref{eq-Mbarra-2}), the two terms on the r.h.s. of (\ref{eq-matriz-C}) can be written as: 
\begin{align*}
\mathbf{V}^{-1}_{\tau} \mathbf{U}^{-1}_{\tau}\mathbf{U}'_{\tau} \mathbf{V}_{\tau}& = 
\begin{pmatrix}
M^{-1}_{\tau} & 0 \\
0  & \overline{M}^{-1}_{\tau}
\end{pmatrix}
\begin{pmatrix}
-(S^0_{\tau}-S_{\tau} )^{-1}S'_{\tau} & -(S^0_{\tau}-S_{\tau} )^{-1}(S^0_{\tau})' \\
(S^0_{\tau}-S_{\tau} )^{-1}S'_{\tau} & (S^0_{\tau}-S_{\tau} )^{-1} (S^0_{\tau})'
\end{pmatrix} 
\begin{pmatrix}
M_{\tau} & 0 \\
0  & \overline{M}_{\tau}
\end{pmatrix} = \\
& = \begin{pmatrix}
-M^{-1}_{\tau}(S^0_{\tau}-S_{\tau} )^{-1}S'_{\tau}M_{\tau} & -M^{-1}_{\tau}(S^0_{\tau}-S_{\tau} )^{-1}(S^0_{\tau})'(S^0_{\tau}-S_{\tau} )^{-1} (M^{T}_{\tau})^{-1} \\
M^{T}_{\tau}S'_{\tau}M_{\tau} & M^{T}_{\tau}(S^0_{\tau})'(S^0_{\tau}-S_{\tau} )^{-1}(M^{T}_{\tau})^{-1}
\end{pmatrix} \\
\mathbf{V}^{-1}_{\tau} \mathbf{V}'_{\tau}& = 
\begin{pmatrix}
M^{-1}_{\tau}M'_{\tau} & 0 \\
0  & \overline{M}^{-1}_{\tau}\overline{M}'_{\tau}
\end{pmatrix} = \\
& = \begin{pmatrix}
M^{-1}_{\tau}M'_{\tau} & 0 \\
0  & - M_{\tau}^{T}(S^0_{\tau}-S_{\tau})'(S^0_{\tau}-S_{\tau})^{-1}(M^{T}_{\tau})^{-1} - (M^{T}_{\tau})'(M^{T}_{\tau})^{-1}
\end{pmatrix}
\end{align*}

From equations (\ref{eq-Stau-A}) and (\ref{eq-A-tau}), we can compute
\begin{align*}
(S^0_{\tau})' &= -3S'_{\tau}+2S'_{\tau}(S''_{\tau})^{-1}S'''_{\tau}(S''_{\tau})^{-1}S'_{\tau} \\
(S^0_{\tau}-S_{\tau} )^{-1}S'_{\tau}  & = -\frac{1}{2}(S'_{\tau})^{-1}S''_{\tau} \\
S'_{\tau}(S^0_{\tau}-S_{\tau} )^{-1}  & = -\frac{1}{2}S''_{\tau}(S'_{\tau})^{-1} \\
(S^0_{\tau})' (S^0_{\tau}-S_{\tau} )^{-1}& =  \left( -3S'_{\tau}+2S'_{\tau}(S''_{\tau})^{-1}S'''_{\tau}(S''_{\tau})^{-1}S'_{\tau}\right)\left(-\frac{1}{2}(S'_{\tau})^{-1}S''_{\tau} (S'_{\tau})^{-1}\right) = \\
& = \frac{3}{2}S''_{\tau}(S'_{\tau})^{-1} - S'_{\tau}(S''_{\tau})^{-1}S'''_{\tau}(S'_{\tau})^{-1} = \\
& = -S'_{\tau}(S''_{\tau})^{-1}S'_{\tau} \left( -\frac{3}{2}(S'_{\tau})^{-1}S''_{\tau}(S'_{\tau})^{-1}S''_{\tau} + (S'_{\tau})^{-1}S'''_{\tau}\right)(S'_{\tau})^{-1} = \\
& = -S'_{\tau}(S''_{\tau})^{-1}S'_{\tau}\cdot \mathbb{S}\left(S_{\tau}\right)\cdot (S'_{\tau})^{-1}
\end{align*}
So, using equation (\ref{eq-matriz-C}), we obtain the explicit expression of the Cartan matrix $\mathbf{C}_{\tau}$ in terms of $S_\tau$ and its derivatives:
\begin{equation}\label{eq-matriz-C-1}
 \mathbf{C}_{\tau}= 
\begin{pmatrix}
M^{-1}_{\tau} & 0 \\
0  & M^{T}_{\tau}
\end{pmatrix}
\begin{pmatrix}
\frac{1}{2}(S'_{\tau})^{-1}S''_{\tau} & -\frac{1}{2}\mathbb{S}\left(S_{\tau}\right) (S'_{\tau})^{-1} \\
S'_{\tau} & -\frac{1}{2}S''_{\tau}(S'_{\tau})^{-1}
\end{pmatrix}
\begin{pmatrix}
M_{\tau} & 0 \\
0  & (M^{T}_{\tau})^{-1}
\end{pmatrix}
+ 
\begin{pmatrix}
M^{-1}_{\tau}M'_{\tau} & 0 \\
0  & -\left(M^{-1}_{\tau}M'_{\tau}\right)^{T}
\end{pmatrix}  
\end{equation}
where we have used that $(M^{T}_{\tau})'(M^{T}_{\tau})^{-1}=\left(M^{-1}_{\tau}M'_{\tau}\right)^{T}$.

If we denote 
\[
 \mathbf{C}_{\tau}= 
\begin{pmatrix}
K_{11} & K_{12} \\
K_{21}  & -K_{11}^{T}
\end{pmatrix}
\]
then we have  
\begin{align*}
K_{11} &= \frac{1}{2}M^{-1}_{\tau}(S'_{\tau})^{-1}S''_{\tau}M_{\tau} + M^{-1}_{\tau}M'_{\tau} \\
K_{12} &= -\frac{1}{2} M^{-1}_{\tau} \mathbb{S}\left(S_{\tau}\right) (S'_{\tau})^{-1} (M^{T}_{\tau})^{-1} \\
K_{21} &= M^{T}_{\tau}S'_{\tau}M_{\tau}
\end{align*}

Because the column vectors of $M_{\tau}$ are normalized with respect to the bilinear form $S'_{\tau}$, see Sect. \ref{sec:moving_frames}, then 
\begin{equation}\label{eq-K21}
K_{21} = M^{T}_{\tau}S'_{\tau}M_{\tau} = \mathrm{Id}
\end{equation}
and therefore 
\begin{equation}\label{eq-SprimaMM}
\boxed{(S'_{\tau})^{-1} = M_{\tau} M^{T}_{\tau} .}
\end{equation}

Deriving the equation $M^{T}_{\tau}S'_{\tau}M_{\tau} = \mathrm{Id}$, we have
\[
(M^{T}_{\tau})' S'_{\tau} M_{\tau} + M^{T}_{\tau} S''_{\tau} M_{\tau} + M^{T}_{\tau} S'_{\tau} M_{\tau} ' =0
\]
and because of Eq. (\ref{eq-SprimaMM}), we get 
\[
M^{T}_{\tau} S''_{\tau} M_{\tau} = - \left[ M_{\tau}^{-1} M'_{\tau} +\left(M_{\tau}^{-1} M'_{\tau}\right)^{T} \right]  .
\]

So, substituting in the expressions of $K_{12}$ and $K_{11}$, we obtain 
\[
K_{12} = -\frac{1}{2} M^{-1}_{\tau} \mathbb{S}\left(S_{\tau}\right) (S'_{\tau})^{-1} (M^{T}_{\tau})^{-1} = -\frac{1}{2} M^{-1}_{\tau} \mathbb{S}\left(S_{\tau}\right) M_{\tau} 
\]
\begin{align*}
K_{11} & = \frac{1}{2} M^{-1}_{\tau}(S'_{\tau})^{-1}S''_{\tau}M_{\tau} + M^{-1}_{\tau}M'_{\tau} =  \\
& =  \frac{1}{2} M^{T}_{\tau}S''_{\tau}M_{\tau} + M^{-1}_{\tau}M'_{\tau} =  \\
& = -\frac{1}{2} \left[ M_{\tau}^{-1} M'_{\tau} +\left(M_{\tau}^{-1} M'_{\tau}\right)^{T} \right] + M^{-1}_{\tau}M'_{\tau} = \\
& = \frac{1}{2} \left[M_{\tau}^{-1} M'_{\tau} - \left(M_{\tau}^{-1} M'_{\tau}\right)^{T} \right] 
\end{align*}

Then, we can write the Cartan matrix as
\begin{equation}\label{eq-Cartan-tau}
\boxed{ \mathbf{C}_{\tau}= 
\begin{pmatrix}
\frac{1}{2} \left[M_{\tau}^{-1} M'_{\tau} - \left(M_{\tau}^{-1} M'_{\tau}\right)^{T} \right] & -\frac{1}{2} M^{-1}_{\tau} \mathbb{S}\left(S_{\tau}\right) M_{\tau}  \\
\mathrm{Id} & \frac{1}{2} \left[M_{\tau}^{-1} M'_{\tau} - \left(M_{\tau}^{-1} M'_{\tau}\right)^{T} \right]
\end{pmatrix}}
\end{equation}

It is important to observe that, since $M_{\tau}$ is the matrix of eigenvectors of $\mathbb{S}\left(S_{\tau}\right)$, then:
\[
K_{12}=-\frac{1}{2} M^{-1}_{\tau} \mathbb{S}\left(S_{\tau}\right) M_{\tau}=-\frac{1}{2} D_{\tau}
\]
is diagonal, and $D_{\tau}$ is the matrix of eigenvalues of the curvature tensor. Moreover, we also note that:
\[
K_{11}^T=\frac{1}{2} \left[\left(M_{\tau}^{-1} M'_{\tau}\right)^{T} - M_{\tau}^{-1} M'_{\tau} \right] =-K_{11} .
\]

Therefore the Cartan matrix has the block structure: 
\[
\boxed{ \mathbf{C}_{\tau}= 
\begin{pmatrix}
\Sigma_{\tau} & K_{\tau} \\
\mathrm{Id} & \Sigma_{\tau}
\end{pmatrix}  }
\]
where $K_{\tau}= -\frac{1}{2} D_{\tau}\in\mathbb{R}^{n\times n}$ is diagonal and $\Sigma_{\tau}\in\mathbb{R}^{n\times n}=\frac{1}{2} \left[\left(M_{\tau}^{-1} M'_{\tau}\right)^{T} - M_{\tau}^{-1} M'_{\tau} \right]$ skew--symmetric.

But the eigenvalues $k_i$ with $i=1,\ldots,n$ of $D_{\tau}$ still verify an additional property. Indeed, since $\Gamma=\Gamma(\tau)$ is parametrized by the geometric arc parameter $\tau$, because of Remark \ref{rmk-zeta_1}, we get:
\begin{equation}\label{eq-condicion-curvatura}
1 = \left| \mathrm{det}\left(\mathcal{R}_{\Gamma(\tau)}- \frac{1}{n}\mathrm{tr}\left(\mathcal{R}_{\Gamma(\tau)}\right)\mathrm{Id}\right) \right| = 
 \left| \mathrm{det}\left(D_{\tau}- \frac{1}{n}\mathrm{tr}\left(D_{\tau}\right)\mathrm{Id}\right) \right| =
\left| \prod_{i=1}^{m}\left(k_i- \frac{1}{n}\sum_{j=1}^{m}k_j\right)\right|
\end{equation}
that establishes a functional dependence among the functions $k_i$, $i=1,\ldots,n$.  Writing $\frac{1}{n}\sum_{j=1}^{m}k_j$ as $\bar{k}$, the previous condition (\ref{eq-condicion-curvatura}) becomes $\prod_{i=1}^{m} | k_i-  \bar{k} | = 1$, or even better:
$$
\prod_{i=1}^{m} \Delta k_i = 1 \, ,
$$
where $\Delta k_i = | k_i-  \bar{k} |$.
\\


Next, let us compute the matrix $\mathbf{C}_{\tau}$ for any parameter.
Let $t$ be another parameter for $\Gamma$ such that $t=\psi(\tau)$ and $\tau=\varphi(t)$ in such a way $\widetilde{\Gamma}=\widetilde{\Gamma}(t)=\Gamma\left( \varphi(t)\right)\simeq\begin{bmatrix} I \\ \widetilde{S}_t \end{bmatrix}$. 
Then we have:
\[
\psi'(\tau)=\frac{1}{\varphi'\left(\psi(\tau)\right)} = \frac{1}{\zeta\left(\psi(\tau)\right)} \, , \quad 
\psi''(\tau)=\frac{-\zeta'\left(\psi(\tau)\right)\cdot \psi'(\tau)}{\zeta^2\left(\psi(\tau)\right)} = \frac{-\zeta'\left(\psi(\tau)\right)}{\zeta^3\left(\psi(\tau)\right)}  \, .
\]
and, because of Prop. \ref{prop-cambio-curvatura}, and Lem. \ref{lem-derivative-4}, we get:
\begin{equation}\label{eq:Stau}
\mathbb{S}\left(S_{\tau}\right) = \frac{1}{\zeta^2(t)}\left(\mathbb{S}(\widetilde{S}_{t})-\mathbb{S}(\varphi)\cdot I \right)  \, .
\end{equation}

Because of equation (\ref{eq-curvatura-Rpar-2}), any eigenvector $\mathbf{f}_a$ of $\mathcal{R}_{\Gamma}(\tau) = \mathbb{S}\left(S_{\tau}\right)$  is also eigenvector of $R_{\widetilde{\Gamma}}(t) = \mathbb{S}(\widetilde{S}_{t})$. If $\mathbf{f}_a$ is normalized by $S'_{\tau}$ then $\widetilde{\mathbf{f}}_a = \frac{1}{\sqrt{\zeta(t)}}\mathbf{f}_a$ is normalized with respect to $\widetilde{S}'_{t}$.
Hence, we have that 
\[
\widetilde{M}_t^T\widetilde{S}'_{t}\widetilde{M}_t = \mathrm{Id} \quad \Leftrightarrow \quad \widetilde{S}'_{t} = \left(\widetilde{M}_t \widetilde{M}_t^T\right)^{-1}
\]
and $\widetilde{M}_t^{-1}\mathbb{S}(\widetilde{S}_{t})\widetilde{M}_t = \widetilde{D}_{t}$, is diagonal.  Moreover
\[
\left.M_{\tau}\right|_{\tau=\varphi(t)}= \sqrt{\zeta(t)}\widetilde{M}_t
\]
where $\widetilde{M}_t$ is the matrix of coordinates of the eigenvectors of $R_{\widetilde{\Gamma}}(t) $ orthonormal with respect to $\widetilde{S}'_{t}$. So, we have: 
\[
\left.\frac{d M_{\tau}}{dt}\right|_{\tau=\varphi(t)}\varphi'(t)= \frac{\zeta'(t)}{2\zeta^{1/2}(t)}\widetilde{M}_t + \zeta^{1/2}(t)\widetilde{M}'_t \, ,
\]
and since $\varphi'=\zeta$, then:
\[
\left.\frac{d M_{\tau}}{dt}\right|_{\tau=\varphi(t)} =  \frac{\zeta'(t)}{2\zeta^{3/2}(t)}\widetilde{M}_t + \frac{1}{\zeta^{1/2}(t)}\widetilde{M}'_t  \,  .
\]
Substituting the previous expressions in the blocks of the matrix (\ref{eq-Cartan-tau}) for $\tau=\varphi(t)$, we obtain:
\begin{equation}
\frac{1}{2} \left[M_{\tau}^{-1} M'_{\tau} - \left(M_{\tau}^{-1} M'_{\tau}\right)^{T} \right] = \frac{1}{2\zeta(t)} \left[ \widetilde{M}^{-1}_t \widetilde{M}'_{t} - \left(\widetilde{M}^{-1}_t \widetilde{M}'_{t}\right)^{T} \right] \, ,
\end{equation}
and also, using (\ref{eq:Stau}), we get:
\begin{equation}
\frac{1}{2} M^{-1}_{\tau} \mathbb{S}\left(S_{\tau}\right) M_{\tau}  =\frac{1}{\zeta(t)^2}\cdot \frac{1}{2}\left( \widetilde{M}^{-1}_{t}\mathbb{S}(\widetilde{S}_{t})\widetilde{M}_{t} - \mathbb{S}(\varphi)\cdot I \right) =\frac{1}{\zeta(t)^2}\cdot \frac{1}{2}\left( \widetilde{D}_{t} - \mathbb{S}(\varphi)\cdot I \right)  \, .
\end{equation}

Then equation (\ref{eq-matriz-C-1}) becomes:
\begin{empheq}[box=\widefbox]{align}
\mathbf{C}_{\varphi(t)} & =
\begin{pmatrix}
\frac{1}{2\zeta(t)} \left[ \widetilde{M}^{-1}_t \widetilde{M}'_{t} - \left(\widetilde{M}^{-1}_t \widetilde{M}'_{t}\right)^{T} \right] & -\frac{1}{2\zeta(t)^2}\left(\widetilde{D}_{t} - \mathbb{S}(\varphi)\cdot \mathrm{Id}\right) \\
\mathrm{Id} & \frac{1}{2\zeta(t)} \left[ \widetilde{M}^{-1}_t \widetilde{M}'_{t} - \left(\widetilde{M}^{-1}_t \widetilde{M}'_{t}\right)^{T} \right]
\end{pmatrix} 
\label{eq-matriz-C-2}
\end{empheq}

\begin{remark}
Note that if $\{k_i\}_{i=1,\ldots,n}$ are the eigenvalues of $\mathbb{S}(S_{\tau})$ and $\{\lambda_i\}_{i=1,\ldots,n}$ the corresponding ones for $\mathbb{S}(\widetilde{S}_{t})$ then, they are related by: 
\[
\boxed{ k_i(\varphi(t))= \frac{1}{\zeta^2(t)}\left(\lambda_i(t) - \mathbb{S}(\varphi)\right) }
\]
Indeed,
\begin{align*}
& ~\mathrm{det}\left( \mathbb{S}(S_{\tau}) - k_i \mathrm{Id}  \right) = 0
\Rightarrow ~\mathrm{det}\left(\frac{1}{\zeta^2} \left(\mathbb{S}(\widetilde{S}_{t})-\mathbb{S}(\varphi)\mathrm{Id}\right) - k_i \mathrm{Id}  \right) = 0 \Rightarrow \\
\Rightarrow & ~\mathrm{det}\left(\frac{1}{\zeta^2} \left[\mathbb{S}(\widetilde{S}_{t})-\left(\mathbb{S}(\varphi)+\zeta^2 k_i \right)\mathrm{Id} \right] \right) = 0
\Rightarrow ~\mathrm{det} \left(\mathbb{S}(\widetilde{S}_{t})-\left(\mathbb{S}(\varphi)+\zeta^2 k_i \right)\mathrm{Id}  \right) = 0 
\Rightarrow  ~\lambda_i = \mathbb{S}(\varphi)+\zeta^2 k_i \, .
\end{align*}
\end{remark}

\begin{remark}\label{rem:derivative_nongeom}  Observe that, with the notation above, and denoting by $\widetilde{S}_t^0 = S_\tau^0\mid_{\varphi (t)}$, the coordinates of the derivative curve depending on the non-geometric arc parameter $t$, we have that:
$$
\widetilde{S}_t^0 = \widetilde{S}_t - 2 \widetilde{S}_t' \left( \widetilde{S}_t'' - \frac{\zeta'}{\zeta} \widetilde{S}_t'\right)^{-1} \widetilde{S}_t' \, .
$$
\end{remark}


\section{A reconstruction theorem for Jacobi curves}\label{sec:reconstruction}

\subsection{Reconstruction of a Jacobi curve from its curvatures}

In this section we will show that the matrices $\Sigma=\Sigma(\tau)$ and $K = K(\tau)$ characterize the Jacobi curves for given initial values. 

Recall that we have fixed a symplectic basis $(\mathbf{e} ,\overline{\mathbf{e}})\subset W$ in which the matrix of the symplectic $2$--form $\omega$  is written by 
\[
J=\begin{pmatrix}
0 & \mathrm{Id} \\
-\mathrm{Id} & 0
\end{pmatrix} \in \mathbb{R}^{2n \times 2n}
\]
where $\mathrm{Id}\in \mathbb{R}^{n \times n}$ is the identity matrix.

First, we will state a lemma.

\begin{lemma}\label{lem-symplectic-f}
Let $\Sigma=\Sigma(\tau)\in\mathbb{R}^{n \times n}$ be a smooth skew--symmetric matrix and $K = K(\tau)\in\mathbb{R}^{n \times n}$ a smooth diagonal one. For a given symplectic basis $F_0=(\mathbf{f}_0 ,\overline{\mathbf{f}}_0)\subset W$, the solution $F(\tau)=(\mathbf{f}(\tau),\overline{\mathbf{f}}(\tau))$ for $\tau\in I\subset \mathbb{R}$ of the initial value problem:  
\begin{equation}\label{eq-EDO-Cartan-2}
\left\{
\begin{tabular}{l}
$\frac{d\left( \mathbf{f}, \overline{\mathbf{f}}\right) }{d\tau} = ( \mathbf{f}', \bar{\mathbf{f}}' ) = \left( \mathbf{f}, \overline{\mathbf{f}}\right)\begin{pmatrix}
\Sigma & K \\
\mathrm{Id} & \Sigma
\end{pmatrix}$ \\
$\mathbf{f}(0)=\mathbf{e}A_0 + \overline{\mathbf{e}} B_0$\\
$\overline{\mathbf{f}}(0)=\mathbf{e}\overline{A}_0 + \overline{\mathbf{e}} \overline{B}_0$
\end{tabular}
\right.
\end{equation}
is a symplectic basis for all $\tau \in I$ and all solutions have the form $PF$ where $P$ is a conformal symplectic transform, $P \in CSp$.
\end{lemma}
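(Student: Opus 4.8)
The plan is to pass to coordinates in the fixed symplectic basis $(\mathbf{e},\overline{\mathbf{e}})$ and reduce everything to elementary facts about linear matrix differential equations, the decisive point being that the coefficient matrix $\mathbf{C}_\tau=\begin{pmatrix}\Sigma & K\\ \mathrm{Id} & \Sigma\end{pmatrix}$ of the system (\ref{eq-EDO-Cartan-2}) belongs to the symplectic Lie algebra. First I would write the unknown frame as $(\mathbf{f}(\tau),\overline{\mathbf{f}}(\tau))=(\mathbf{e},\overline{\mathbf{e}})\,P_\tau$, with $P_\tau\in\mathbb{R}^{2n\times 2n}$ and $P_0=\begin{pmatrix}A_0 & \overline{A}_0\\ B_0 & \overline{B}_0\end{pmatrix}$; then (\ref{eq-EDO-Cartan-2}) becomes the linear equation $P'_\tau=P_\tau\,\mathbf{C}_\tau$. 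Since $\mathbf{C}_\tau$ is smooth on $I$, this has a unique solution defined on all of $I$, and Liouville's formula $(\mathrm{det}\,P_\tau)'=\mathrm{tr}(\mathbf{C}_\tau)\,\mathrm{det}\,P_\tau$ together with $\mathrm{det}\,P_0\neq 0$ shows $P_\tau\in GL(2n,\mathbb{R})$ for every $\tau$, so that $F(\tau)$ is at least a basis of $W$.

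Next I would verify, by a one-line computation using $\Sigma^{T}=-\Sigma$ and $K^{T}=K$ (it is diagonal), the identity $\mathbf{C}_\tau^{T}J+J\,\mathbf{C}_\tau=0$, i.e.\ $\mathbf{C}_\tau\in\mathfrak{sp}$ in the matrix form recalled in Section \ref{sec:basis}, \textit{cfr.}\ (\ref{eq:lie_algebra}). To see that $F(\tau)$ remains symplectic I set $Q_\tau:=P_\tau^{T}J\,P_\tau$, the Gram matrix of the frame $F(\tau)$ with respect to $\omega$, so that $F(\tau)$ is a symplectic basis precisely when $Q_\tau=J$. Differentiating and substituting $P'_\tau=P_\tau\mathbf{C}_\tau$ gives the linear ODE $Q'_\tau=\mathbf{C}_\tau^{T}Q_\tau+Q_\tau\mathbf{C}_\tau$; by the previous step the constant matrix $Q\equiv J$ solves it, and $Q_0=J$ because $F_0$ is symplectic, so uniqueness of solutions forces $Q_\tau=J$ for all $\tau\in I$. (The identical argument with $Q_0=\lambda J$ shows that a conformal symplectic initial frame with scaling factor $\lambda$ propagates to a conformal symplectic frame with that same factor, which is what makes the $CSp$ wording natural.)

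Finally, for the assertion on the dependence on initial data, I would take a second solution $\widetilde{F}(\tau)=(\mathbf{e},\overline{\mathbf{e}})\,\widetilde{P}_\tau$ of (\ref{eq-EDO-Cartan-2}) and consider $R_\tau:=\widetilde{P}_\tau P_\tau^{-1}$; differentiating and using $P'_\tau=P_\tau\mathbf{C}_\tau$, $\widetilde{P}'_\tau=\widetilde{P}_\tau\mathbf{C}_\tau$ together with $(P_\tau^{-1})'=-\mathbf{C}_\tau P_\tau^{-1}$ gives $R'_\tau=0$, hence $R_\tau\equiv R_0=\widetilde{P}_0 P_0^{-1}$ and $\widetilde{F}(\tau)=R_0\,F(\tau)$ for every $\tau$. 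If both initial frames are symplectic bases then $P_0,\widetilde{P}_0\in Sp(W)$, so $R_0\in Sp(W)\subset CSp(W)$; more generally, if they are conformal symplectic bases with scaling factors $\lambda$ and $\widetilde{\lambda}$, then $R_0^{T}J\,R_0=(P_0^{-1})^{T}(\widetilde{P}_0^{T}J\,\widetilde{P}_0)P_0^{-1}=(\widetilde{\lambda}/\lambda)\,J$, so $R_0\in CSp(W)$ in all cases. Thus any two solutions are related by a fixed conformal symplectic transformation $P=R_0$, which is the claim.

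The only genuinely substantive step is the algebraic identity $\mathbf{C}_\tau^{T}J+J\,\mathbf{C}_\tau=0$; the rest is the standard principle that a flow whose infinitesimal generator lies in a Lie subalgebra stays in the corresponding subgroup, which I prefer to run through uniqueness for linear ODEs rather than Lie-group machinery, both to keep the argument self-contained and to get the conclusion globally on $I$. The one point needing care is the precise reading of ``all solutions'': the relation $\widetilde{F}=R_0F$ holds for arbitrary solutions of (\ref{eq-EDO-Cartan-2}), but the conclusion $R_0\in CSp$ uses that both initial frames are (conformal) symplectic bases, which is exactly the hypothesis of the lemma.
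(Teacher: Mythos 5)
Your proposal is correct and takes essentially the same route as the paper: pass to coordinate matrices in the fixed basis, turn (\ref{eq-EDO-Cartan-2}) into a linear matrix ODE, and use uniqueness of solutions to see that the symplectic Gram data stay constant --- the paper does this block-by-block through the ODEs for $\omega(\mathbf{f},\mathbf{f})$, $\omega(\mathbf{f},\overline{\mathbf{f}})$, $\omega(\overline{\mathbf{f}},\overline{\mathbf{f}})$, while you compress it into the single identity $\mathbf{C}_\tau^{T}J+J\,\mathbf{C}_\tau=0$ (valid because $\Sigma$ is skew and $K$, $\mathrm{Id}$ are symmetric) applied to $Q_\tau=P_\tau^{T}JP_\tau$. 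A welcome extra is that you actually prove the final clause, namely that any two solutions differ by a fixed conformal symplectic transformation (via the constancy of $R_\tau=\widetilde{P}_\tau P_\tau^{-1}$), a point the lemma asserts but the paper's own proof does not argue explicitly.
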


\begin{proof}
The existence and uniqueness of the solution $F=F(\tau) = (\mathbf{f}(\tau),\overline{\mathbf{f}}(\tau))$, is ensured by Picard-Lindel\"{o}f theorem \cite[Thm. 1.1]{Ha64}. So, we have that $\mathbf{f}=\left(\mathbf{f}_1,\ldots ,\mathbf{f}_m\right)$ and $\overline{\mathbf{f}}=\left(\overline{\mathbf{f}}_1,\ldots ,\overline{\mathbf{f}}_m\right)$, are the unique solution of the system (\ref{eq-EDO-Cartan-2}) that we write as 
\begin{equation}\label{eq-coordinates-ff}
\left\{
\begin{tabular}{l}
$\mathbf{f}=\mathbf{e}A + \overline{\mathbf{e}} B$ \\
$\overline{\mathbf{f}}=\mathbf{e}\overline{A} + \overline{\mathbf{e}} \overline{B}$
\end{tabular}
\right.
\end{equation}

Now, let us denote the $m\times m$ matrices
\begin{equation}\label{eq-matrices-omegaff}
\omega\left( \mathbf{f}, \mathbf{f} \right) = \left( A^{T}, B^{T}\right) J \begin{pmatrix} A \\ B \end{pmatrix} , \quad 
\omega\left( \mathbf{f}, \overline{\mathbf{f}} \right) = \left( A^{T}, B^{T}\right) J \begin{pmatrix} \overline{A} \\ \overline{B} \end{pmatrix} , \quad 
\omega\left( \overline{\mathbf{f}}, \overline{\mathbf{f}} \right) = \left( \overline{A}^{T}, \overline{A}^{T}\right) J \begin{pmatrix} \overline{A} \\ \overline{B}  \end{pmatrix} 
\end{equation}

With this notation, we have that the initial value problem (\ref{eq-EDO-Cartan-2}) becomes: 
\[
\begin{pmatrix}
A' & \overline{A}' \\
B' & \overline{B}'
\end{pmatrix} 
=
\begin{pmatrix}
A & \overline{A} \\
B & \overline{B}
\end{pmatrix}
\begin{pmatrix}
\Sigma & K \\
\mathrm{Id} & \Sigma
\end{pmatrix}
\]
and then
\begin{align}
A' &= A\Sigma + \overline{A} \label{eq-EDO-1} \\ 
B' &= B\Sigma + \overline{B} \label{eq-EDO-2} \\ 
\overline{A}' &= A K + \overline{A}\Sigma \label{eq-EDO-3} \\ 
\overline{B}' &= B K + \overline{B}\Sigma \label{eq-EDO-4} 
\end{align}

Deriving the matrices of (\ref{eq-matrices-omegaff}) and using that $\Sigma^{T}=-\Sigma$, $K^{T}= K$, $\omega\left( \overline{\mathbf{f}} , \mathbf{f}\right)=-\omega\left( \mathbf{f}, \overline{\mathbf{f}} \right)^{T}$ and equations (\ref{eq-EDO-1})--(\ref{eq-EDO-4}), we obtain the system of ODEs  
\[
\left\{
\begin{tabular}{l}
$\frac{d}{d\tau}\omega\left( \mathbf{f}, \mathbf{f} \right) = -\Sigma \omega\left( \mathbf{f}, \mathbf{f} \right) + \omega\left( \mathbf{f}, \mathbf{f} \right)\Sigma + \omega\left( \mathbf{f}, \overline{\mathbf{f}} \right) + \omega\left( \overline{\mathbf{f}}, \mathbf{f} \right)$ \vspace{1mm} \\
$\frac{d}{d\tau}\omega\left( \mathbf{f}, \overline{\mathbf{f}} \right) = -\Sigma \omega\left( \mathbf{f}, \overline{\mathbf{f}} \right) + \omega\left( \mathbf{f}, \overline{\mathbf{f}} \right)\Sigma + \omega\left( \overline{\mathbf{f}}, \overline{\mathbf{f}} \right) + \omega\left( \mathbf{f}, \mathbf{f} \right) K$ \vspace{1mm}  \\
$\frac{d}{d\tau}\omega\left( \overline{\mathbf{f}}, \overline{\mathbf{f}} \right) = K \omega\left( \mathbf{f}, \overline{\mathbf{f}} \right)- \Sigma \omega\left( \overline{\mathbf{f}}, \overline{\mathbf{f}} \right) + \omega\left( \overline{\mathbf{f}}, \mathbf{f} \right) K + \omega\left( \overline{\mathbf{f}}, \overline{\mathbf{f}} \right)\Sigma$ \end{tabular}
\right.
\]
which has the solution: 
\begin{equation}\label{eq-diff-matrices-omegaff}
\omega\left( \mathbf{f}, \mathbf{f} \right) = 0 , \quad 
\omega\left( \mathbf{f}, \overline{\mathbf{f}} \right) = \mathrm{Id} , \quad 
\omega\left( \overline{\mathbf{f}}, \overline{\mathbf{f}} \right) = 0 \, ,
\end{equation}
and by uniqueness of the solution, we have that $F(\tau)=(\mathbf{f}(\tau),\overline{\mathbf{f}}(\tau))$ is a symplectic basis for all $\tau\in I$.
\end{proof}

Now, we can establish the following reconstruction theorem.

\begin{theorem}\label{thm:reconstruction}
Given smooth matrices $\Sigma=\Sigma(\tau)\in\mathbb{R}^{n \times n}$ and $K = K(\tau)\in\mathbb{R}^{n \times n}$, where $\Sigma$ is skew--symmetric and $K$ diagonal for $\tau\in I\subset \mathbb{R}$,  with diagonal elements $k_i$ satisfying the functional relation (\ref{eq-condicion-curvatura}), that is:
$$
\prod_{i=1}^{m} | k_i-  \bar{k} | = 1\, ,
$$
and a symplectic basis $F_0=(\mathbf{f}_0 ,\overline{\mathbf{f}}_0)\subset W$, then there exist a unique Jacobi curve $\Gamma=\Gamma(\tau)$ such that $\Gamma(0)=\mathrm{span}\{\mathbf{f}_0\}\in\mathscr{L}(W)$ and the Cartan matrix related to $\Gamma$ is:
\begin{equation}\label{eq:red_cartan}
\mathbf{C}_{\Gamma}=\begin{pmatrix}
\Sigma & K \\
\mathrm{Id} & \Sigma
\end{pmatrix}  .
\end{equation}
\end{theorem}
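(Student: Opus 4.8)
The plan is to build $\Gamma$ explicitly by integrating the prescribed Cartan matrix and then, running the structure computations of Section~\ref{sec:Cartan} in reverse, to verify that the integrated frame really is the symplectic Frenet frame of the resulting curve and that $\tau$ is its geometric arc parameter, so that its Cartan matrix is precisely \eqref{eq:red_cartan}. Concretely, I would apply Lemma~\ref{lem-symplectic-f} with the data $\Sigma$, $K$ and the initial frame $F_0$ to obtain a symplectic moving frame $F(\tau)=(\mathbf{f}(\tau),\overline{\mathbf{f}}(\tau))$, $\tau\in I$, solving $F'=F\left(\begin{smallmatrix}\Sigma & K\\ \mathrm{Id} & \Sigma\end{smallmatrix}\right)$ with $F(0)=F_0$, and set $\Gamma(\tau)=\Span\{\mathbf{f}(\tau)\}$, $\Delta(\tau)=\Span\{\overline{\mathbf{f}}(\tau)\}$. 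By \eqref{eq-diff-matrices-omegaff} these are Lagrangian and mutually transverse, so $\Gamma$ is a smooth curve in $\mathscr{L}(W)$ with $\Gamma(0)=\Span\{\mathbf{f}_0\}$. It then remains to show: (i) $\Gamma$ is a regular Jacobi curve; (ii) $\Delta$ is its derivative curve; (iii) $F$ is a symplectic Frenet frame of $\Gamma$ and $\tau$ is the geometric arc parameter, whence the Cartan matrix of $\Gamma$ equals \eqref{eq:red_cartan}; and (iv) uniqueness.

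For (i) I would use Proposition~\ref{prop-bilineal-tangente} with the curve $v(t)=\mathbf{f}(t)x$, $x\in\mathbb{R}^{n}$: the first block-column of the Cartan system gives $\mathbf{f}'=\mathbf{f}\,\Sigma+\overline{\mathbf{f}}$, so $\dot\Gamma(\tau)(\mathbf{f}(\tau)x)=\omega(\mathbf{f}(\tau)x,\mathbf{f}'(\tau)x)=\omega(\mathbf{f}(\tau)x,\overline{\mathbf{f}}(\tau)x)=x^{T}x$ by \eqref{eq-diff-matrices-omegaff}; hence $\dot\Gamma(\tau)$ is positive definite, $\Gamma$ is a monotone (in particular regular) Jacobi curve, and $\mathbf{f}(\tau)$ is $\dot\Gamma(\tau)$-orthonormal, i.e. $M_\tau^{T}S'_\tau M_\tau=\mathrm{Id}$ in the notation of \S\ref{sec:moving_frames}. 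For (ii), by Theorem~\ref{thm-curva-derivativa} it suffices to check $\Gamma''_{\Delta(\tau)}(\tau)=0$: freezing the symplectic basis $F(\tau)$ and using it to coordinatize the affine chart on $\Delta(\tau)^{\pitchfork}$ with origin $\Gamma(\tau)$, write $\mathbf{f}(t)=\mathbf{f}(\tau)\widetilde A_t+\overline{\mathbf{f}}(\tau)\widetilde B_t$, so that $\Gamma(t)\simeq[\mathrm{Id};\widetilde S_t]$ with $\widetilde S_t=\widetilde B_t\widetilde A_t^{-1}$. Differentiating $(\widetilde A_t,\widetilde B_t)^{T}$ via the frozen-frame analogue of \eqref{eq-matriz-Pprima} yields $\widetilde A_\tau=\mathrm{Id}$, $\widetilde B_\tau=0$, $\widetilde A'_\tau=\Sigma_\tau$, $\widetilde B'_\tau=\mathrm{Id}$, $\widetilde A''_\tau=\Sigma_\tau^{2}+K_\tau+\Sigma'_\tau$, $\widetilde B''_\tau=2\Sigma_\tau$, from which $\widetilde S_\tau=0$, $\widetilde S'_\tau=\mathrm{Id}$ and $\widetilde S''_\tau=\widetilde B''_\tau-2\widetilde B'_\tau\widetilde A'_\tau=2\Sigma_\tau-2\Sigma_\tau=0$. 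Thus $\Gamma''_{\Delta(\tau)}(\tau)=0$, and by the uniqueness in Theorem~\ref{thm-curva-derivativa}, $\Delta(\tau)$ is the derivative curve of $\Gamma$.

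For (iii), with (i)--(ii) settled, $\mathbf{f}(\tau)$ is $\dot\Gamma(\tau)$-orthonormal and, since $(\mathbf{f}(\tau),\overline{\mathbf{f}}(\tau))$ is symplectic, $\overline{\mathbf{f}}(\tau)$ is the unique complement in $\Delta(\tau)$ provided by Lemma~\ref{lema-base-simplectica}; so it only remains to see that $\mathbf{f}(\tau)$ diagonalises $R_\Gamma(\tau)$. In the basis $\mathbf{f}(\tau)$, by \eqref{eq:RGamma} and \eqref{eq:Ricci_Schwarz} applied to $\widetilde S_t$ (recall $\widetilde S'_\tau=\mathrm{Id}$, $\widetilde S''_\tau=0$), the matrix of $R_\Gamma(\tau)$ is $\widetilde S'''_\tau=\mathbb{S}(\widetilde S_\tau)$; on the other hand the $(1,2)$-block of the Cartan matrix of $F$ is $-\tfrac12 M_\tau^{-1}\mathbb{S}(S_\tau)M_\tau=-\tfrac12\mathbb{S}(\widetilde S_\tau)$ by the derivation of \eqref{eq-Cartan-tau}, and this block equals $K_\tau$ because $F$ satisfies the Cartan system by construction (cf. \eqref{eq:Cartan_gen}--\eqref{eq-matriz-Pprima}). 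Hence $R_\Gamma(\tau)=-2K_\tau$ in that basis, which is diagonal, so $\mathbf{f}(\tau)$ is an eigenbasis of the curvature (consistently with Corollary~\ref{cor-R-diagonal}), $F$ is a symplectic Frenet frame of $\Gamma$, and its Cartan matrix $\mathbf{P}_\tau^{-1}\mathbf{P}'_\tau$ is exactly $\left(\begin{smallmatrix}\Sigma & K\\ \mathrm{Id} & \Sigma\end{smallmatrix}\right)$. Finally $\tau$ is the geometric arc parameter: by \eqref{eq-zeta} and Remark~\ref{rmk-zeta_1} this amounts to $\bigl|\det\bigl(R_\Gamma(\tau)-\tfrac1n\operatorname{tr}R_\Gamma(\tau)\,\mathrm{Id}\bigr)\bigr|=1$, which, after expressing the eigenvalues of $R_\Gamma(\tau)$ through the diagonal entries of $K$, reduces precisely to the functional relation $\prod_i|k_i-\bar k|=1$ of \eqref{eq-condicion-curvatura} assumed in the statement. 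Uniqueness, (iv), is then immediate from Picard--Lindel\"of: any Jacobi curve meeting the hypotheses has a symplectic Frenet frame $G$ satisfying the same Cartan system with $G(0)=F_0$, so $G=F$ and the curve is $\Gamma$.

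The hardest part will be step~(iii): one must reconstruct the intrinsic objects — the derivative curve, the curvature eigenvalues, and the geometric arc element — out of the bare ODE data $\Sigma,K$, which means running the structure computations of Section~\ref{sec:Cartan} backwards and checking that the normalisations built into the Frenet construction (the orthonormality $M_\tau^{T}S'_\tau M_\tau=\mathrm{Id}$ and the constraint \eqref{eq-condicion-curvatura}) are genuinely recovered rather than assumed. The delicate bookkeeping points are the identification, in the frozen chart, of the $(1,2)$-block $K_\tau$ with $-\tfrac12\mathbb{S}(\widetilde S_\tau)$ and hence with $-\tfrac12 R_\Gamma(\tau)$, and the verification that the geometric-arc normalisation $\bigl|\det(R_\Gamma-\tfrac1n\operatorname{tr}R_\Gamma\,\mathrm{Id})\bigr|=1$ is equivalent to the hypothesis on the $k_i$.
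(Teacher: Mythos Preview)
Your proposal is correct and follows the same overall strategy as the paper --- integrate the Cartan system from $F_0$ via Lemma~\ref{lem-symplectic-f}, set $\Gamma(\tau)=\Span\{\mathbf{f}(\tau)\}$, and then verify that the resulting $F$ is a symplectic Frenet frame of $\Gamma$ --- but the verification is carried out along a different computational route. The paper stays in a fixed background basis $(\mathbf{e},\overline{\mathbf{e}})$, writes $\mathbf{f}=\mathbf{e}A+\overline{\mathbf{e}}B$, $\overline{\mathbf{f}}=\mathbf{e}\overline{A}+\overline{\mathbf{e}}\overline{B}$, and computes $S'=(AA^{T})^{-1}$, $S''$, $S'''$, $\mathbb{S}(S)$ explicitly from the ODE relations \eqref{eq-EDO-1}--\eqref{eq-EDO-4}; the algebraic hinge there is that $A^{-1}\overline{A}$ is symmetric (a consequence of \eqref{eq-base-simplectica}), which after a page of matrix manipulation forces the $(1,2)$-block to collapse to $K$. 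You instead freeze the frame at each $\tau$, compute $\widetilde S_\tau=0$, $\widetilde S'_\tau=\mathrm{Id}$, $\widetilde S''_\tau=0$ directly from the Cartan system, and thereby read off the geometric characterisations (regularity, orthonormality of $\mathbf{f}$, and --- via the uniqueness in Theorem~\ref{thm-curva-derivativa} --- that $\Span\{\overline{\mathbf{f}}\}$ is the derivative curve); then you invoke the structural formula \eqref{eq-Cartan-tau}, whose derivation used only those ingredients and \emph{not} that $\mathbf{f}$ is already an eigenbasis, to identify the $(1,2)$-block as $-\tfrac12 M^{-1}\mathbb{S}(S)M$ and equate it with the prescribed block $K$, obtaining the diagonalisation a posteriori. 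Your route is more economical and recycles the structure theory already established in \S\ref{sec:Cartan}; the paper's route is self-contained and makes explicit the coordinate identity $A^{-1}\overline{A}=(A^{-1}\overline{A})^{T}$ that does the work behind the scenes.
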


\begin{definition}
With the assumptions of Thm. \ref{thm:reconstruction}, we will call the matrix $\mathbf{C}_\Gamma$, given by Eq. (\ref{eq:red_cartan}), the reduced Cartan matrix of $\Gamma$ and it provides the normal form for the Cartan matrix determined by a moving symplectic frame adapted to a Jacobi curve, \textit{cfr.} Sect. \ref{sec:moving_frames}, Eq. (\ref{eq:Cartan_gen}).
\end{definition}

\begin{proof}
Let $F=(\mathbf{f},\overline{\mathbf{f}})$ be the unique solution of the problem (\ref{eq-EDO-Cartan-2}) of lemma \ref{lem-symplectic-f} that can be written as in (\ref{eq-coordinates-ff}). Since $F=F(\tau)$ is a symplectic basis for all $\tau\in I$ then equations (\ref{eq-matrices-omegaff}) become:
\begin{align}
& A^{T}B = B^{T}A \label{eq-sympl-1} \\ 
& A^{T}\overline{B} -B^{T}\overline{A} = \mathrm{Id} \label{eq-sympl-2} \\ 
& \overline{A}^{T}\overline{B} = \overline{B}^{T}\overline{A} \label{eq-sympl-4} 
\end{align}

Let us define the Jacobi curve $\Gamma(\tau)=\mathrm{span}\{\mathbf{f}\}$, so 
\[
\Gamma(\tau) \simeq \begin{bmatrix}
A \\ B
\end{bmatrix} \simeq \begin{bmatrix}
\mathrm{Id} \\ BA^{-1}
\end{bmatrix}
, \qquad
\mathrm{span}\{\overline{\mathbf{f}}\} \simeq \begin{bmatrix}
\overline{A} \\ \overline{B}
\end{bmatrix} \simeq \begin{bmatrix}
\mathrm{Id} \\ \overline{B}~\overline{A}^{-1}
\end{bmatrix}
\]
and so, we denote the symmetric matrices
\[
S=BA^{-1} , \qquad \overline{S}=\overline{B}~\overline{A}^{-1}   .
\]

By equation (\ref{eq-base-simplectica}), we have that 
\[
A^{T}\left( \overline{S} - S \right)\overline{A}=\mathrm{Id} \Longrightarrow  \overline{A}=\left( \overline{S} - S \right)^{-1}(A^{T})^{-1}
\]
and therefore
\begin{equation}\label{eq-MMbarra-simetrica}
\left(A^{-1}\overline{A}\right)^{T} = \left(A^{-1}\left( \overline{S} - S \right)^{-1}(A^{T})^{-1}\right)^{T} = A^{-1}\left( \overline{S} - S \right)^{-1}(A^{T})^{-1} = A^{-1}\overline{A}
\end{equation}
so $A^{-1}\overline{A}$ is a symmetric matrix.

Now, let us compute the blocks of the Cartan matrix (\ref{eq-Cartan-tau}) for the curve $\Gamma$, where we denote
\[
\mathbf{C}_{\Gamma} = \begin{pmatrix}
\mathbf{C}_{11} & \mathbf{C}_{12} \\
\mathbf{C}_{21} & \mathbf{C}_{11}
\end{pmatrix}    .
\] 

First, we have
\begin{align*}
S' &= B'A^{-1}-BA^{-1}A'A^{-1} =  & \text{ (because of (\ref{eq-EDO-1}) and (\ref{eq-EDO-2}))} \\
&= \left(B\Sigma + \overline{B}\right)A^{-1}-BA^{-1}\left(A\Sigma + \overline{A}\right)A^{-1} = & \\
&= \left(\overline{B}-BA^{-1}\overline{A}\right)A^{-1} = & \text{(because of (\ref{eq-sympl-2}))} \\
&= \left(\left(A^{T}\right)^{-1} + \left(A^{T}\right)^{-1}B^{T}\overline{A} -BA^{-1}\overline{A}\right)A^{-1} = & \\
&= \left(\left(A^{T}\right)^{-1} + \left[\left(BA^{-1}\right)^{T} -BA^{-1}\right]\overline{A}\right)A^{-1} = & \text{ ( since } S=BA^{-1} \text{  is symmetric )} \\
&= \left(AA^{T}\right)^{-1} \, ,
\end{align*}
then,
\begin{equation}\label{eq-A-normalized}
 A^{T}S'A= \mathrm{Id}  \, ,
\end{equation}
corresponding to the block $\mathbf{C}_{21}$.

Since $S'=\left(AA^{T}\right)^{-1}$, because of equation (\ref{eq-EDO-1}), we obtain:
\[
S'' = -S'\left( \overline{A}A^{T} + A \overline{A}^{T} \right) S' \, ,
\]
and
\[
S''' = 2S'\left( \overline{A}A^{T} + A \overline{A}^{T} \right) S'\left( \overline{A}A^{T} + A \overline{A}^{T} \right) S'-2S'\left( A\Delta A^{T} + \overline{A} ~\overline{A}^{T} \right) S' \, .
\]
Then,
\[
\mathbb{S}(S) = \frac{1}{2}\left( \overline{A}A^{T} + A \overline{A}^{T} \right) S'\left( \overline{A}A^{T} + A \overline{A}^{T} \right) S'-2\left( A K A^{T} + \overline{A} ~\overline{A}^{T} \right) S' \, .
\]
The block $\mathbf{C}_{11}$ of $\mathbf{C}_{\Gamma}$ is  
\[
\mathbf{C}_{11}=\frac{1}{2}\left[K_\tau^{-1} A'_{\tau} - \left(K_\tau^{-1} A'_{\tau}\right)^{T} \right]
\]
so using (\ref{eq-EDO-1}) and the symmetry of $A^{-1} \overline{A}$ we have:
\[
\mathbf{C}_{11}=\Sigma +\frac{1}{2}\left[A^{-1} \overline{A} - \left(A^{-1} \overline{A}\right)^{T} \right] = \Sigma  \, .
\]

Finally, the block $\mathbf{C}_{12}$ is
\begin{equation}\label{eq-block-C12}
\mathbf{C}_{12}=-\frac{1}{2}A^{-1}\mathbb{S}(S)A
\end{equation}
so using (\ref{eq-MMbarra-simetrica}), we get:
\begin{align}
\mathbf{C}_{12} & = -\frac{1}{4}\left( A^{-1}\overline{A}A^{T} +  \overline{A}^{T} \right) S'\left( \overline{A} + A \overline{A}^{T} (A^{T})^{-1}\right)  + \left( K  + A^{-1}\overline{A} ~\overline{A}^{T} (A^{T})^{-1}\right) = \nonumber \\
&= -\frac{1}{4}\left( A^{-1}\overline{A}A^{-1} +  \overline{A}^{T}(A^{T})^{-1}A^{-1} \right) \left( \overline{A} + A \overline{A}^{T} (A^{T})^{-1}\right)  + \left( K  + A^{-1}\overline{A} ~\overline{A}^{T} (A^{T})^{-1}\right) = \nonumber  \\
&= -\frac{1}{4}\left( A^{-1}\overline{A}A^{-1}\overline{A} +  \left(A^{-1}\overline{A}\right)^{T}A^{-1} \overline{A} + A^{-1}\overline{A}\left(A^{-1}\overline{A}\right)^{T} +  \left(A^{-1}\overline{A}A^{-1}\overline{A}\right)^{T} \right) + \nonumber  \\
& + \left( K + A^{-1}\overline{A} \left(A^{-1}\overline{A}\right)^{T}\right) =  - \left(A^{-1}\overline{A}\right)^{2} + K  +  \left(A^{-1}\overline{A}\right)^{2} = K \label{eq-C12-Delta}
\end{align}
concluding that the Cartan matrix is:
\[
\mathbf{C}_{\Gamma} = \begin{pmatrix}
\Sigma & K \\
\mathrm{Id} & \Sigma
\end{pmatrix} \, ,
\]
as claimed.
\end{proof}

\begin{remark}
Observe that equation (\ref{eq-C12-Delta}) implies, because of (\ref{eq-block-C12}), that the matrix $A\in\mathbb{R}^{n \times n}$ in the proof corresponds to the matrix of eigenvectors of $\mathbb{S}(S)$ whose matrix of eigenvalues is $-\frac{1}{2}K \in\mathbb{R}^{n \times n}$. 
Since $S'$ is symmetric and it can be written by the product $S'=(A^{-1})^{T}A^{-1}$ then $S'$ is positive definite. 
Moreover, by equation (\ref{eq-C12-Delta}), the eigenvectors of $\mathbb{S}(S)$ given by the matrix $A$ are normalized by the scalar product given by $S'$. 
\end{remark}

It is obvious that the parameters $k_i$, diagonal elements of the matrix $K$, and the entries $\sigma_{ij}$, $1 \leq i < j \leq n$, of the matrix $\Sigma$, are absolute curvatures for the Jacobi curve $\Gamma$.  

Note that in the reconstruction theorem above, Thm. \ref{thm:reconstruction}, the choice of the adapted symplectic basis is not unique in the sense, that any other basis $(\epsilon_1 f_1, \ldots, \epsilon_n f_n, \epsilon_1 \bar{f}_1, \ldots, \epsilon_n \bar{f}_n)$, with $\epsilon_k = \pm 1$, with give rise to the same Jacobi curve. The same will happen if we reorder the vectors $f_k, \bar{f_k}$, or, if there are multiple eigenvalues, the structure of the eigenvectors $f_k$ can be more general.   Thus, in the generic situation when the eigenvalues $k_1, \ldots, k_n$ are all different and are ordered in ascending order, that is, $k_1 < \cdots < k_n$, we will say that two reduced Cartan matrices:
$$
C = \begin{pmatrix}
\Sigma & K \\
\mathrm{Id} & \Sigma
\end{pmatrix} \, , \quad \mathrm{and\,\,} \overline{C} = \begin{pmatrix}
\overline{\Sigma} & \overline{K} \\
\mathrm{Id} & \overline{\Sigma}
\end{pmatrix} \, ,
$$
are equivalent if there exists a diagonal matrix $P$ with diagonal entries $\pm 1$, such that $\overline{K} = P K P$, and $\overline{\Sigma} = P \Sigma P$. 


\subsection{The structure of Jacobi curves in $\mathrm{dim}(W)= 4$}\label{sec:dim4}

Whenever $n=2$, condition (\ref{eq-condicion-curvatura}) becomes:
\[
\frac{1}{4}\left(k_1-k_2\right)^2 = 1 \quad \Leftrightarrow \quad \vert k_1-k_2 \vert = 2
\]
and there exists $\mu=\mu(\tau)$ such that:
\[
\left\{
\begin{matrix}
k_1(\tau)=\mu(\tau)-1 \\
k_2(\tau)=\mu(\tau)+1
\end{matrix}
\right.
\]
So, without any lack of generality, we can write the reduced Cartan matrix as:
\[
\mathbf{C}_{\tau}= 
\begin{pmatrix}
0 & \sigma & -\left(\frac{\mu-1}{2}\right) & 0 \\
-\sigma & 0 & 0 & -\left(\frac{\mu+1}{2}\right) \\
1 & 0 & 0 & \sigma \\
0 & 1 & -\sigma & 0 
\end{pmatrix} 
\]

From the functions $\mu=\mu(\tau)$ and $\sigma=\sigma(\tau)$ we can compute the matrix $M_{\tau}$ solving the homogeneous linear system of ODEs given by:
\begin{equation}\label{eq-EDO-Cartan}
\left\{ \begin{tabular}{l}
$\mathbf{f}'_1 = -\sigma \mathbf{f}_2 + \overline{\mathbf{f}}_1$ \\
$\mathbf{f}'_2 = \sigma \mathbf{f}_1 + \overline{\mathbf{f}}_2$ \\
$\overline{\mathbf{f}}'_1 = -\left(\frac{\mu-1}{2}\right) \mathbf{f}_1 -\sigma \overline{\mathbf{f}}_2$ \\
$\overline{\mathbf{f}}'_2 = -\left(\frac{\mu+1}{2}\right) \mathbf{f}_2 +\sigma \overline{\mathbf{f}}_1$
\end{tabular} \right.
\end{equation}

Observe that, provided that $\sigma\equiv 0$, the system (\ref{eq-EDO-Cartan}) can be decoupled as:
\begin{equation}\label{eq-EDO-Cartan-1}
\left\{ \begin{tabular}{l}
$\mathbf{f}'_1 = \overline{\mathbf{f}}_1$ \\
$\overline{\mathbf{f}}'_1 = -\left(\frac{\mu-1}{2}\right) \mathbf{f}_1 $ 
\end{tabular} \right.
~, \qquad 
\left\{ \begin{tabular}{l}
$\mathbf{f}'_2 =  \overline{\mathbf{f}}_2$ \\
$\overline{\mathbf{f}}'_2 = -\left(\frac{\mu+1}{2}\right) \mathbf{f}_2 $
\end{tabular} \right.
\end{equation}

\subsubsection{Case $\sigma\equiv0$ and $\mu\equiv \pm 1$.}

The solutions of an initial value problem of the form: 
\[
\left\{ \begin{tabular}{l}
$\mathbf{f}' =  \overline{\mathbf{f}}$ \\
$\overline{\mathbf{f}}' = \eta \mathbf{f} $ \\
$\mathbf{f}(0)=\mathbf{v}$ and $\overline{\mathbf{f}}(0)=\overline{\mathbf{v}}$
\end{tabular} \right.
\]
with constant $\eta$ depends on the sign of $\eta$. So, if $\eta>0$, then we get:
\[
\left\{
\begin{tabular}{l}
$\mathbf{f}(\tau)=\cosh(\sqrt{\eta}\tau) \cdot \mathbf{v} +  \frac{1}{\sqrt{\eta}}\sinh(\sqrt{\eta}\tau)\cdot \overline{\mathbf{v}} $  \\
$\overline{\mathbf{f}} (\tau)=\sqrt{\eta} \sinh(\sqrt{\eta}\tau)\cdot \mathbf{v} +  \cosh(\sqrt{\eta}\tau)\cdot \overline{\mathbf{v}} $
\end{tabular}
\right.
\]
If $\eta<0$, then the solution is given by:
\[
\left\{
\begin{tabular}{l}
$\mathbf{f}(\tau)= \cos(\sqrt{\vert \eta\vert}\tau) \cdot \mathbf{v} +  \frac{1}{\sqrt{\vert \eta\vert }}\sin(\sqrt{\vert \eta\vert }\tau) \cdot \overline{\mathbf{v}}$  \\
$\overline{\mathbf{f}} (\tau)=-\sqrt{\vert \eta\vert }\sin(\sqrt{\vert \eta\vert }\tau) \cdot \mathbf{v} +  \cos(\sqrt{\vert \eta\vert }\tau)\cdot \overline{\mathbf{v}} $
\end{tabular}
\right.
\]
and if $\eta=0$:
\[
\left\{
\begin{tabular}{l}
$\mathbf{f}(\tau)=\mathbf{v} + \overline{\mathbf{v}}\cdot \tau $  \\
$\overline{\mathbf{f}} (\tau)=\overline{\mathbf{v}} $
\end{tabular}
\right.
\]

Observe that if $\mu=\pm 1$, one of the eigenvalues $k_i\equiv 0$ and therefore the vector $\overline{\mathbf{f}}_i$ is constant. On the other hand, if $\overline{\mathbf{f}}_i (\tau)$ is constant along the Jacobi curve, in virtue of the fourth equation of the system (\ref{eq-EDO-Cartan}), since $\mathbf{f}_i$ and $\overline{\mathbf{f}}_j$ are linearly independent, then $\sigma\equiv 0$ and $k_i\equiv 0$. 
That is, for $i=1,2$ we have:
\[
\boxed{ \overline{\mathbf{f}}_i (\tau) \text{ constant}\Longleftrightarrow \left\{ \begin{matrix} \sigma\equiv 0  \\ k_i\equiv 0 \end{matrix} \right.  }
\]

\begin{example}
In this example, we will assume that $\sigma=0$ and $\mu=-1$, so $k_1=-2$ and $k_2=0$ and then the solutions $\mathbf{f}_1$ and $\mathbf{f}_2$ of (\ref{eq-EDO-Cartan}) are given by:
\[
\left\{
\begin{tabular}{l}
$\mathbf{f}_1(\tau)=\cosh(\tau) \cdot \mathbf{v}_1 +  \sinh(\tau)\cdot \overline{\mathbf{v}}_1 $\\
$\mathbf{f}_2(\tau)=\mathbf{v}_2 + \overline{\mathbf{v}}_2\cdot \tau $  
\end{tabular}
\right.
\]
If we consider the initial symplectic basis:
\[
\left\{
\begin{tabular}{l}
$\mathbf{v}_1 =\mathbf{e}_1 $  \\
$\mathbf{v}_2 = \mathbf{e}_2$ \\
$\overline{\mathbf{v}}_1= \mathbf{e}_1 + \overline{\mathbf{e}}_1$ \\
$\overline{\mathbf{v}}_2= \mathbf{e}_2 + \overline{\mathbf{e}}_2$ 
\end{tabular}
\right.
\]
with respect to the basis $\left(\mathbf{e}_1 ,\mathbf{e}_2 ,\overline{\mathbf{e}}_1, \overline{\mathbf{e}}_2\right)$, then  
\begin{equation}\label{eq-f-ejemplo}
\left\{
\begin{tabular}{l}
$\mathbf{f}_1(\tau)=\left(\cosh(\tau)  +  \sinh(\tau)\right) \mathbf{e}_1 + \sinh(\tau)\overline{\mathbf{e}}_1$  \\
$\mathbf{f}_2(\tau)= (1+\tau)\mathbf{e}_2 + \tau\overline{\mathbf{e}}_2$
\end{tabular}
\right.
\end{equation}
and the matrix $M_{\tau}$ can be written by the corresponding coordinates with respect to  $(\mathbf{e}_1,\mathbf{e}_2)$. That is, 
\[
M_{\tau} = \begin{pmatrix}
\cosh(\tau)  +  \sinh(\tau) & 0 \\
0 & 1+\tau
\end{pmatrix}
\] 
and so, by (\ref{eq-SprimaMM}), we obtain 
\[
S'_{\tau}=\left(M_{\tau}M_{\tau}^{T}\right)^{-1} = \begin{pmatrix}
\left(\cosh(\tau)  +  \sinh(\tau)\right)^{-2} & 0 \\
0 & (1+\tau)^{-2}
\end{pmatrix}
\]

Also observe that it is possible to compute \textbf{directly} the matrix $S_{\tau}$ from the coordinates of $(\mathbf{f}_1,\mathbf{f}_2)$ with respect to $(\overline{\mathbf{e}}_1,\overline{\mathbf{e}}_2)$ by using the expression of the first equation of the system (\ref{eq-f-fbarra}). 
By (\ref{eq-f-ejemplo}), we have
\[
S_{\tau}M_{\tau}=\begin{pmatrix}
\sinh(\tau) & 0 \\
0 & \tau
\end{pmatrix}
\]
and then
\[
S_{\tau}=\begin{pmatrix}
\sinh(\tau) & 0 \\
0 & \tau
\end{pmatrix}
\begin{pmatrix}
 \frac{1}{\cosh(\tau)  +  \sinh(\tau)} & 0 \\
0 & \frac{1}{1+\tau}
\end{pmatrix} = 
\begin{pmatrix}
\frac{\sinh(\tau)}{\cosh(\tau)  +  \sinh(\tau)} & 0 \\
0 & \frac{\tau}{1+\tau} 
\end{pmatrix} 
\]
\end{example} 

\begin{example}
Now, we will assume that $\sigma=0$ and $\mu=1$ (this means $k_1=0$ and $k_2=2$) and the solutions of (\ref{eq-EDO-Cartan}) are:
\[
\left\{
\begin{tabular}{l}
$\mathbf{f}_1(\tau)=\mathbf{v}_1 + \overline{\mathbf{v}}_1\cdot \tau $ \\
$\mathbf{f}_2(\tau)=\cos(\tau) \cdot \mathbf{v}_2 +  \sin(\tau)\cdot \overline{\mathbf{v}}_2 $  
\end{tabular}
\right.
\]
Again, if the initial vectors are:
\[
\left\{
\begin{tabular}{l}
$\mathbf{v}_1 =\mathbf{e}_1 $  \\
$\mathbf{v}_2 = \mathbf{e}_2$ \\
$\overline{\mathbf{v}}_1= \mathbf{e}_1 + \overline{\mathbf{e}}_1$ \\
$\overline{\mathbf{v}}_2= \mathbf{e}_2 + \overline{\mathbf{e}}_2$ 
\end{tabular}
\right.
\]
then  
\begin{equation}\label{eq-f-ejemplo2}
\left\{
\begin{tabular}{l}
$\mathbf{f}_1(\tau)= (1+\tau)\mathbf{e}_1 + \tau\overline{\mathbf{e}}_1$ \\
$\mathbf{f}_2(\tau)= \left(\cos(\tau)  +  \sin(\tau)\right) \mathbf{e}_2 + \sin(\tau)\overline{\mathbf{e}}_2$
\end{tabular}
\right.
\end{equation}
and the matrix $M_{\tau}$ is written as:
\[
M_{\tau} = \begin{pmatrix}
1+\tau & 0 \\
0 & \cos(\tau)  +  \sin(\tau)
\end{pmatrix} \, ,
\] 
and we obtain 
\[
S'_{\tau}=\left(M_{\tau}M_{\tau}^{T}\right)^{-1} = \begin{pmatrix}
(1+\tau)^{-2} & 0 \\
0 & \left(1+\cos 2\tau\right)^{-1}
\end{pmatrix} \, ,
\]
or directly, we have
\[
S_{\tau}=
\begin{pmatrix}
 \frac{\tau}{1+\tau} & 0 \\
0 & \frac{\sin(\tau)}{\cos(\tau)  +  \sin(\tau)} 
\end{pmatrix}  \, .
\]
\end{example} 


\subsection{The reconstruction theorem and the algorithmic classification of admissible Jacobi curves}\label{sec:algorithm}

The theory and classification of Jacobi curves with respect to the action of the conformal symplectic group developed so far allows for an algorithmic description.   In fact, such algorithmic classification can be implemented in any modern symbolic manipulation environment like Mathematica or Maple.    We will succinctly sketch it in  what follows.

Let $\Gamma = \Gamma (t) = \left[\begin{array}{c} I \\ S_t \end{array} \right]_{(\mathbf{e}, \overline{\mathbf{e}})}$ be a Jacobi curve in $\mathscr{L}(W)$, with $W$ a conformal symplectic space of dimension $2n$ with conformal symplectic class $[\omega]$.\\

A)  First, we will check its admissibility, \textit{cfr.} Def. \ref{def:Ricci}.

\begin{enumerate}
\item  Compute $S_t'$, which is the matrix of a bilinear form in $\Gamma (t)$ with respect to the basis $\mathbf{e} + \overline{\mathbf{e}} S_t$, and we check that is definite positive, \textit{cfr.} Remark \ref{rem:Sprime}.   If it were positive negative we change $\omega$ by $-\omega$.

\item  Compute the matrix $\mathbb{S}(S_t)$, \textit{cfr.} Eq. (\ref{eq:Ricci_Schwarz}), which is the matrix of the Ricci curvature tensor $R_\Gamma (t) \colon \Gamma (t) \to \Gamma (t)$ in the basis $\boldsymbol{\epsilon} = \mathbf{e} + \overline{\mathbf{e}} S_t$, and we check that it is diagonalisable, \textit{cfr.} Thm. \ref{thm:symmetric}.  We check that all its eigenvalues $\mu_i$, $i = 1, \ldots, n$, are different and we reorder them, if necessary, such that $\mu_1 < \mu_2 < \cdots < \mu_n$.

\item Compute $\zeta (t)$ using Eq. (\ref{eq-zeta}), and obtain the geometric arc parameter $d \tau = \zeta (t) dt$. 

\item  Check that $\det \left(  \mathbb{S}(S_t - \frac{1}{n} \mathrm{Tr\,} (\mathbb{S}(S_t))\right) \neq 0$, which amounts to $(\bar{\mu} - \mu_1) \cdots (\bar{\mu} - \mu_n) ) = \zeta^{2n}$, where $\bar{\mu} = \frac{1}{n} (\mu_1 + \cdots + \mu_n)$ and $\zeta (t)$ is given by Eq. (\ref{eq-zeta}).
\end{enumerate}

B) We compute the absolute curvature operator $\mathcal{R}_\Gamma (t)$, \textit{cfr.} Def. (\ref{def:absolute_curvature}), Eq. (\ref{eq:absolute_curvature}).\\

C) Next, compute the eigenvalues $k_i$, $i = 1, \ldots, n$, of $R_\Gamma (t)$:
$$
k_i = \frac{1}{\zeta^2} (\mu_i - \mathbb{S}(\zeta) )\, , \qquad k_1 < \cdots < k_n \, .
$$

Let $S_t^0 = S_t - 2 S_t' \left(S_t'' - \frac{\zeta'}{\zeta} S_t' \right)^{-1} S_t'$ be the coordinates of the derivative curve depending on $t$, see Rem. \ref{rem:derivative_nongeom}. 

D) Construct $\mathbf{f} = (f_1, \ldots, f_n) = \boldsymbol{\epsilon} M$, the $S_t'$-orthonormal basis of eigenvectors of $R_\Gamma (t)$, and $\overline{\mathbf{f}}= \boldsymbol{\epsilon} \overline{M}$, with $\overline{M} = \mathbf{f} (S_t - S_t^0) (M^T)^{-1}$, \textit{cfr.} Lemma \ref{lema-base-simplectica}, Eq. (\ref{eq-base-simplectica}). 

The basis $F = (\mathbf{f}, \overline{\mathbf{f}})$ thus constructed is the symplectic Frenet basis adapted to the curve $\Gamma(t)$, \textit{cfr.} Sect. \ref{sec:moving_frames}.

E) The equation $F' = F C$ provides the reduced Cartan matrix $C$.   Namely, $\mathbf{f}' = \mathbf{f} \Sigma + \overline{\mathbf{f}}$ will provide the matrices:
$$
K = \left(\begin{array}{ccc} k_1 &   &   \\   & \ddots &   \\ & & k_n \end{array} \right) \, , \qquad \Sigma = \frac{1}{2\zeta(t)} \left(  \widetilde{M}^{-1}_t \widetilde{M}'_{t} - \left(\widetilde{M}^{-1}_t \widetilde{M}'_{t}\right)^{T}  \right) \, 
$$
\textit{cfr.} (\ref{eq-matriz-C-2}), with $k_1 < \cdots < k_n$ for all $t$, that ends the computation.\\

In this spirit and  following the comments after the proof of the reconstruction theorem, Thm. \ref{thm:reconstruction}, we will end this section by restating it as follows:

\begin{theorem} (Classification theorem for Jacobi curves). 
\begin{enumerate}
\item Two Jacobi curves $\Gamma = \Gamma (t)$ and $\overline{\Gamma} = \overline{\Gamma} (t)$ parametrised by a geometric arc are $CSp$-equivalent if and only if they  have equivalent reduced Cartan matrices.

\item   Two Jacobi curves $\Gamma = \Gamma (t)$ and $\overline{\Gamma} = \overline{\Gamma} (t)$ are $CSp$-equivalent if and only if $ds_\Gamma (t) = ds_{\overline{\Gamma}}(t)$ and their reduced Cartan matrices are equivalent.

\item Fixed a reduced Cartan matrix $C(t)$:
$$
C (t) = \begin{pmatrix}
\Sigma_t & K_t \\
\mathrm{Id} & \Sigma_t
\end{pmatrix} 
$$
 $t \in I \subset \mathbb{R}$, depending smoothly on $t$, with $k_1 < \cdots < k_n$, and $\prod |k_i - \bar{k}| = 1$, and a 1-form $ds (t) \neq 0$ for all $t$, there exists a Jacobi curve $\Gamma (t)$ such that $C_\Gamma (t) = C(t)$ and $ds_\Gamma (t) = ds(t)$.  Moreover this curve is unique up to transformations by the conformal symplectic group $CSp$.
\end{enumerate}
\end{theorem}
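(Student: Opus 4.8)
The plan is to assemble the theorem from the machinery already developed. Statement (3) will follow from the reconstruction theorem, Thm.~\ref{thm:reconstruction}, once the prescribed $1$--form $ds(t)$ is used to pass to a geometric arc parameter; statements (1) and (2) will then be obtained by combining the $CSp$--equivariance of the symplectic Frenet construction of Sect.~\ref{sec:moving_frames} with the uniqueness in Lemma~\ref{lem-symplectic-f}, together with the covariance of the geometric arc element under changes of parameter (Def.~\ref{def:geometric_arc}).

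For (3): given $C(t)$ and $ds(t)=\zeta(t)\,dt$ with $\zeta(t)\neq 0$ (replacing $t$ by $-t$ if necessary, we may assume $\zeta>0$), I would set $\tau=\int_{t_0}^{t}\zeta(u)\,du$, a change of parameter for which the arc element becomes $d\tau$, so that $\tau$ is a geometric arc parameter. Rewriting $\Sigma$ and $K$ as functions of $\tau$, the standing hypotheses --- $\Sigma$ skew--symmetric, $K$ diagonal with $\prod_i|k_i-\bar k|=1$ --- are exactly those required by Thm.~\ref{thm:reconstruction}, which then yields a Jacobi curve $\Gamma=\Gamma(\tau)$, parametrized by its geometric arc, with $\Gamma(0)=\mathrm{span}\{\mathbf f_0\}$ and reduced Cartan matrix $C(\tau)$, unique up to $CSp$. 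Reverting to the parameter $t$ gives $\Gamma=\Gamma(t)$; the covariance of $ds_\bullet$ gives $ds_\Gamma(t)=ds(t)$, and since the reduced Cartan matrix is intrinsic (computed after passing to the geometric arc) it is still $C(t)$. Uniqueness: any Jacobi curve with the same data, re-expressed in its geometric arc parameter, meets the hypotheses of Thm.~\ref{thm:reconstruction} with the same pair $(C,\mathbf f_0)$, hence is $CSp$--equivalent to $\Gamma$.

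For (1) and (2): in (1) I would first treat the direction ``equivalent reduced Cartan matrices $\Rightarrow$ $CSp$--equivalent''. Writing $\overline C=\mathrm{diag}(P,P)\,C\,\mathrm{diag}(P,P)$ with $P$ diagonal and $P^2=\mathrm{Id}$ (so automatically $\overline K=K$ and $\overline\Sigma=P\Sigma P$), and both curves parametrized by a geometric arc $\tau$, I would take the symplectic Frenet frame $F=(\mathbf f,\overline{\mathbf f})$ of $\Gamma$ and observe that $\hat F:=F\cdot\mathrm{diag}(P,P)$ is again a symplectic basis, spans the same Lagrangian curve $\mathrm{span}\{\mathbf f\}=\Gamma(\tau)$, and satisfies $\hat F'=\hat F\,\overline C$; the Frenet frame $\overline F$ of $\overline\Gamma$ obeys the same linear system, so Lemma~\ref{lem-symplectic-f} produces $a\in CSp$ with $\overline F=a\hat F$, whence $\overline\Gamma=a\,\Gamma$. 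For the converse I would note that a $CSp$--equivalence between two geometric--arc curves forces the intervening change of parameter to preserve the $CSp$--invariant element $d\tau$, hence to be a translation, and that the whole construction of Sects.~\ref{sec:Ricci}--\ref{sec:Cartan} --- derivative curve, Ricci curvature, Schwarzian --- depends only on the conformal symplectic structure, so the Frenet frame of $a\,\Gamma$ equals $a$ applied to that of $\Gamma$ (a global scalar arising from $\lambda_a\neq1$ cancels in the Cartan equation) up to the sign ambiguity of the $S'_\tau$--orthonormal ordered eigenbasis of $R_\Gamma$ --- no permutation enters, the eigenvalues being simple and increasing --- i.e.\ up to a factor $\mathrm{diag}(P,P)$; comparing Cartan equations yields $\overline C=\mathrm{diag}(P,P)\,C\,\mathrm{diag}(P,P)$. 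Statement (2) I would reduce to (1): reparametrizing $\Gamma$ and $\overline\Gamma$ by their geometric arc lengths, the hypothesis $ds_\Gamma(t)=ds_{\overline\Gamma}(t)$ makes the two reparametrizations coincide, so the geometric--arc forms fall under (1) and the resulting equivalence is transported back; conversely a $CSp$--equivalence forces both $ds_\Gamma=ds_{\overline\Gamma}$ (invariance and covariance of $ds_\bullet$) and, through (1), the equivalence of the reduced Cartan matrices.

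The hard part will not be any single computation but the careful bookkeeping of the residual discrete ambiguity in the Frenet frame --- the $\pm1$ signs of the eigenvectors, and the need for the curvature eigenvalues to be simple and ordered so that no permutation of frame vectors is possible --- together with a clean statement that the constructions of Sects.~\ref{sec:Ricci}--\ref{sec:Cartan} are strictly $CSp$--equivariant, including the behaviour under a conformal factor $\lambda_a\neq1$; these facts are implicit in the earlier sections but must be made explicit here. A secondary issue is to fix, once and for all, the meaning of ``$ds_\Gamma(t)=ds_{\overline\Gamma}(t)$'' and of equivalence of reduced Cartan matrices as functions of the parameter (up to translation), so that the reparametrization freedom built into the definition of $CSp$--equivalence is exactly absorbed.
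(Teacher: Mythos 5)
Your proposal is correct and follows essentially the same route the paper intends: the paper states this theorem as a restatement of the reconstruction theorem (Thm.~\ref{thm:reconstruction}), and your argument assembles exactly those ingredients — passing to the geometric arc via the prescribed $1$-form, invoking Thm.~\ref{thm:reconstruction} and the uniqueness in Lemma~\ref{lem-symplectic-f}, and handling the residual $\mathrm{diag}(P,P)$ sign ambiguity through the declared equivalence of reduced Cartan matrices. The bookkeeping points you flag (parameter translations, $CSp$-equivariance of the Frenet construction with the conformal factor cancelling in the Cartan equation) are precisely the issues the paper leaves implicit, so no gap beyond the paper's own level of detail.
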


In the particular instance of $\dim W = 4$, \textit{cfr.} Sect. \ref{sec:dim4}, given a Jacobi curve $\Gamma (t)$, we get $k_1(t) = - \frac{\mu(t)-1}{2}$, $k_2(t) = - \frac{ \mu(t) + 1}{2}$, and $\sigma (t)$ is obtained from $f_1' = - \sigma f_2 + f_1$ (if $\sigma < 0$, we take $-f_2$ instead), and we get the curvatures $\sigma_\Gamma (t) = \sigma (t) \geq 0$, and $\mu_\Gamma (t) = \mu(t)$ for the curve $\Gamma(t)$.   Conversely, given the functions $\mu (t) \neq 0$, fand $\sigma (t) \geq 0$, or all $t$, and a 1-form $ds = \zeta (t) dt$, $\zeta (t) > 0$, for all $t$, there exists a Jacobi curve $\Gamma (t)$ in a 4-dimensional conformal symplectic space such that $\mu_\gamma = \mu$, $\sigma_\Gamma = \sigma$, and $ds_\Gamma = ds$, and all of them are of the form $\overline{\Gamma} = \phi (\Gamma)$, with $\phi \in CSp$.


\section{Cycles in $ \mathscr{L}\left(W\right) $}\label{sec:cycles}

We will end this work by observing that Jacobi curves with null Ricci curvature are cycles in the Lagrangian Grassmannian, that is, the closure at infinite of affine lines.
Given a Lagrangian subspace $\overline{\Lambda}\in \mathscr{L}\left(  W\right) $, there are two classes of affine lines $L$ in the affine space  $\overline{\Lambda}^{\pitchfork}$:

\begin{description}
\item[Regular] Affine lines $L$ verifying that $\forall\Lambda_{1},\Lambda_{2}\in L$,
$\Lambda_{1}\neq\Lambda_{2}$, then $\Lambda_{1}\cap\Lambda_{2}=0$.

\item[Singular] Affine lines $L$ verifying that $\forall\Lambda_{1},\Lambda_{2}\in L$,
$\Lambda_{1}\neq\Lambda_{2}$, then $\Lambda_{1}\cap\Lambda_{2}\neq0$.
\end{description}
To show that this is true it suffices to notice that if $L$ is a line in
$\overline{\Lambda}^{\pitchfork}$ passing through $\Lambda\in\overline{\Lambda
}^{\pitchfork}$, then taking $S=\mathcal{S}_{\overline{\Lambda}^{\pitchfork}%
}^{\Lambda}$, then $L=L\left(  t\right)  =t\overset{.}{S}_{0}$ provided that
$\det\overset{.}{S}_{0}\neq0$.  In such case $L$ is regular, because $L\left(
a\right)  \cap L\left(  b\right)  =0$ if $a\neq b$, that is, $b\overset{.}{S}%
_{0}-a\overset{.}{S}_{0}=\left(  b-a\right)  \overset{.}{S}_{0}$  and
$\det\left(  b-a\right)  \overset{.}{S}_{0}\neq0$.  If $\det\overset
{.}{S}_{0}=0$, the line  $L$ is obviously singular.

\begin{remark}
A regular line $L$ in $\overline{\Lambda}^{\pitchfork}$ is
an admissible Jacobi curve in $ \mathscr{L}\left(  W\right)  $.
\end{remark}

\begin{definition}[Cycle]  Let $\overline{\Lambda}\in \mathscr{L}\left(  W\right)$ be a Lagrangian subspace and  $L$ an
affine line in $\overline{\Lambda}^{\pitchfork}$.  We will call a cycle the set
$C=\widetilde{L}=L\cup\left\{  \overline{\Lambda}\right\}$.  The cycle $C$ will be said to be regular if  $L$ is regular, otherwise singular.
\end{definition}

In what follows we will show that regular cycles are characterised as maximal Jacobi curves with vanishing curvature with respect to any projective parametrisation.

\begin{proposition}\label{C2} 
If $C$ is a regular cycle then for each $\Lambda\in C$ we have that
$C\backslash\left\{  \Lambda\right\} $ is a Jacobi curve  and its projective reparametrizations $\Gamma = \Gamma(t)$ have vanishing curvature operator, $\mathcal{R}_{\Gamma} =0$.
\end{proposition}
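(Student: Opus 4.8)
The statement to prove is that a regular cycle, minus any one of its points, is a Jacobi curve whose projective reparametrisations have vanishing curvature operator $\mathcal{R}_\Gamma = 0$. My plan is to exploit the affine-line description already set up: a regular cycle $C = \widetilde L$ with $L\subset\overline\Lambda^\pitchfork$ can, by Lemma \ref{lema-base-simplectica} and the coordinate discussion of \S\ref{sec:coordinates}, be written in a symplectic basis adapted to $\Lambda\oplus\overline\Lambda$ (with $\Lambda$ a chosen point of $L$) as $L(t) = \Gamma_{tN}$ where $N = \dot S_0$ is a \emph{constant} nonsingular symmetric matrix; that is, $S_t = tN$. The regularity of the line forces $\det N\neq 0$, so $S_t' = N$ is nonsingular for all $t$, which is precisely the condition that $L(t)$ is a regular Jacobi curve (Def. \ref{def:jacobi}, Remark \ref{rem:Sprime}). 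This handles the first assertion: $C\setminus\{\Lambda\}$, parametrised by $t\mapsto \Gamma_{tN}$, is a Jacobi curve. For a different removed point $\Lambda'\in C$, one either repeats the argument with $\Lambda'$ as origin, or observes that passing from one affine chart to another is a projective change of coordinates under which the class of regular Jacobi curves is preserved.

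\textbf{Computing the curvature.} With $S_t = tN$ we have $S_t' = N$, $S_t'' = 0$, $S_t''' = 0$. The Ricci/Schwarzian operator is, by Eq. (\ref{eq:Ricci_Schwarz}),
\[
\mathbb{S}(S_t) = (S_t')^{-1}S_t''' - \tfrac32\big((S_t')^{-1}S_t''\big)^2 = N^{-1}\cdot 0 - \tfrac32\,(N^{-1}\cdot 0)^2 = 0,
\]
so $R_\Gamma(t) = 0$ identically in this linear parametrisation. In particular $\mathrm{Ric}_t = \operatorname{tr} R_\Gamma(t) = 0$, so $t$ is already a projective parameter: the defining Schwarzian equation $\mathbb{S}(\varphi) = \tfrac1n\mathrm{Ric}_\Gamma = 0$ is solved by $\varphi = \mathrm{id}$ (up to an affine/Möbius change of parameter). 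Then the transformation law (\ref{eq-curvatura-Rpar-2}) for the Ricci curvature under a projective reparametrisation $t = \psi(\bar t)$ gives
\[
R_{\overline\Gamma}(\bar t) = \Big(\tfrac{d\psi}{d\bar t}\Big)^2\Big[R_\Gamma(\psi(\bar t)) - \tfrac1n\mathrm{Ric}_{\psi(\bar t)}\,\mathrm{Id}\Big] = 0,
\]
and therefore, by Definition \ref{def:absolute_curvature}, the absolute curvature operator $\mathcal{R}_\Gamma = \zeta^{-2}[R_\Gamma - \mathbb{S}(\varphi)\,\mathrm{Id}]$ vanishes as well, for \emph{any} projective parametrisation $\Gamma(t)$ of $C\setminus\{\Lambda\}$. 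This is the second assertion.

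\textbf{Main obstacle.} The genuine computations are trivial; the care is entirely in the bookkeeping of \emph{which} affine chart and \emph{which} parameter. The one point that needs a clean argument is independence of the removed point: a priori the parametrisation of $C\setminus\{\Lambda\}$ as a straight line depends on using $\Lambda$ as the origin of the chart $\overline\Lambda^\pitchfork$, but the conclusion $\mathcal{R}_\Gamma = 0$ must hold for $C\setminus\{\Lambda'\}$ for every $\Lambda'\in C$ (including $\Lambda' = \overline\Lambda$, which requires switching to a $\Lambda'^\pitchfork$-chart). The way I would close this is to note that a change of affine chart on $\mathscr{L}(W)$ is a fractional-linear map, hence sends the affine line $L$ to another affine line (or its complement), so in the new chart $C$ minus its new omitted point is again of the form $S_t = t\widetilde N + \text{const}$ with $\widetilde N$ constant nonsingular symmetric; the same computation applies. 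Equivalently, one invokes that $\mathcal{R}_\Gamma$ is an absolute, $CSp$-invariant, reparametrisation-covariant object, so its vanishing on one projective parametrisation of one chart-piece of $C$ propagates to all of them. I would also remark (for the converse direction used later) that $S_t'' = 0$ is exactly the statement that $\Gamma$ is a straight line in $\overline\Lambda^\pitchfork$, tying this back to Prop. \ref{prop:derivative2} and the derivative-curve construction, but that is not needed for the present proposition.
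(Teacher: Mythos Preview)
Your proof is correct and follows essentially the same route as the paper's: write the line as $S_t = tN$ with $N$ nonsingular symmetric, observe $\mathbb{S}(S_t)=0$ directly, and then argue that this persists under projective reparametrisation and under change of removed point. One imprecision worth tightening: your claim that ``a change of affine chart on $\mathscr{L}(W)$ is a fractional-linear map, hence sends the affine line $L$ to another affine line'' is not true in general for $n>1$ (the paper's Remark~\ref{rem} makes exactly this point: $\widetilde S_t = (tS_1 - S^0)^{-1}$ is typically not a line unless $S^0$ lies on $L$). What saves you here is that the relevant chart change is $S\mapsto S^{-1}$ from $\mathcal{S}^{\Lambda}_{\overline\Lambda^\pitchfork}$ to $\mathcal{S}^{\overline\Lambda}_{\Lambda^\pitchfork}$, and your line passes through the origin $\Lambda$, so $(tN)^{-1}=t^{-1}N^{-1}$ is again a line through the origin; the paper carries this computation out explicitly rather than appealing to a general principle.
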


\begin{proof}
Indeed, if $C=\widetilde{L}=L\cup\left\{  \overline{\Lambda}\right\}$
with  $L$ a affine line in $\overline{\Lambda}^{\pitchfork}$,  and
$\Lambda\in C$ with $\Lambda\neq\overline{\Lambda}$, then taking coordinates $S=\mathcal{S}_{\overline{\Lambda}^{\pitchfork}}^{\Lambda}$,
we can write $L=L\left(  t\right)$ with $S\left(  L\left(  t\right)
\right)  =t\overset{.}{S}_{0}$, and $\det\overset{.}{S}_{0}\neq0$, which is an affine parametrization of  $L=C\backslash\left\{  \overline{\Lambda
}\right\}$. This parametrization is projective because $\mathbb{S}\left(
t\overset{.}{S}_{0}\right)  =0$ and, consequently$Ric_{\Gamma}\left(  t\right) =0.$ 

Taking now coordinates $S^{-1}=\mathcal{S}_{\Lambda^{\pitchfork}%
}^{\overline{\Lambda}}$ in $\overline{\Lambda}^{\pitchfork}\cap\Lambda
^{\pitchfork}$, it is easy to see that $S^{-1}\left(  L\left(  t\right)  \right)  =\left(
1/t\right)  \overset{.}{S}_{0}^{-1}$($t\neq0$) is a parametrization of 
$C\backslash\left\{  \Lambda,\overline{\Lambda}\right\}  = \overline
{L}\backslash\left\{  \overline{\Lambda}\right\} $, where  $\overline
{L}=C\backslash\left\{  \Lambda\right\} $ is an affine line in   $\Lambda^{\pitchfork}$.

Last, observe that any projective reparametrization 
$\overline{L}=C\backslash\left\{  \Lambda\right\}  $ has the form:
\[
S_{t}=\frac{at+b}{ct+d} \, S_{0}%
\]
and the Schwarzian derivative  $\mathbb{S}\left(  S_{t}\right)  =0$. Then $\mathcal{R}_\Gamma = 0$ because of (\ref{eq:Ricci_Schwarz}).
\end{proof}

\begin{corollary} If $C$ is a regular cycle and $\Lambda\in C$, then $C\backslash\left\{
\Lambda\right\}$ is an affine line in $\overline{\Lambda}$.
\end{corollary}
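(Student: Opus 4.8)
The plan is to reduce everything to the change-of-chart computation already carried out inside the proof of Proposition \ref{C2}. Write the regular cycle as $C = L \cup \{\overline{\Lambda}\}$, with $L$ a regular affine line in $\overline{\Lambda}^{\pitchfork}$, and fix $\Lambda \in C$. First I would dispose of the trivial case $\Lambda = \overline{\Lambda}$: here $C \setminus \{\Lambda\} = L$ is by definition an affine line in $\overline{\Lambda}^{\pitchfork} = \Lambda^{\pitchfork}$, and since $L$ is regular all its points are mutually transverse, so indeed $C \setminus \{\Lambda\} \subset \Lambda^{\pitchfork}$. So the substantive case is $\Lambda \in L$ with $\Lambda \neq \overline{\Lambda}$.

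In that case the first step is to check $C \setminus \{\Lambda\} \subset \Lambda^{\pitchfork}$: every other point of $L$ is transverse to $\Lambda$ because $L$ is regular, and $\overline{\Lambda} \in \Lambda^{\pitchfork}$ because $\Lambda \in \overline{\Lambda}^{\pitchfork}$. Next I would take the affine coordinate $S = \mathcal{S}^{\Lambda}_{\overline{\Lambda}^{\pitchfork}}$ on $\overline{\Lambda}^{\pitchfork}$ with origin at $\Lambda$ and parametrize $L$ as $S(L(t)) = t\dot{S}_0$ with $\det \dot{S}_0 \neq 0$, so that $L(0) = \Lambda$. Passing to the overlapping chart $\mathcal{S}^{\overline{\Lambda}}_{\Lambda^{\pitchfork}}$ on $\overline{\Lambda}^{\pitchfork} \cap \Lambda^{\pitchfork}$, whose transition from $S$ is matrix inversion, the curve $t \mapsto L(t)$ ($t \neq 0$) is read off as $(t\dot{S}_0)^{-1} = (1/t)\dot{S}_0^{-1}$; after the reparametrization $u = 1/t$ this is the line $u \mapsto u\,\dot{S}_0^{-1}$ through the origin $\overline{\Lambda}$, and $\det \dot{S}_0^{-1} \neq 0$ shows this line is again regular.

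Finally I would make the "missing points" match up. As $u$ ranges over $\mathbb{R} \setminus \{0\}$ one recovers $L \setminus \{\Lambda\}$; the value $u = 0$ is exactly $\overline{\Lambda}$; and the direction $u \to \pm\infty$, i.e. $t \to 0$, corresponds to the one Lagrangian subspace that leaves the chart $\Lambda^{\pitchfork}$, namely the removed point $\Lambda$ itself. Hence $C \setminus \{\Lambda\} = (L \setminus \{\Lambda\}) \cup \{\overline{\Lambda}\}$ is precisely the full affine line $\{\, u\,\dot{S}_0^{-1} : u \in \mathbb{R} \,\}$ in $\Lambda^{\pitchfork}$, and it is regular. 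The only delicate point — and thus the main, rather mild, obstacle — is this bookkeeping of which subspace plays the role of "infinity" in each chart: one must verify that passing to $S^{-1}$ genuinely fills out the entire line (every value of $u$, including $u = 0$) and that the single subspace lost under inversion is $\Lambda$ and not some other Lagrangian subspace. Everything else is the inversion identity $(t\dot{S}_0)^{-1} = (1/t)\dot{S}_0^{-1}$ borrowed verbatim from the proof of Proposition \ref{C2}.
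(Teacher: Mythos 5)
Your argument is correct and is essentially the paper's own: the corollary is obtained there directly from the second part of the proof of Proposition \ref{C2}, where the same inversion of coordinates $\mathcal{S}^{\Lambda}_{\overline{\Lambda}^{\pitchfork}} \mapsto \mathcal{S}^{\overline{\Lambda}}_{\Lambda^{\pitchfork}}$ sends $t\dot S_0$ to $(1/t)\dot S_0^{-1}$ and exhibits $C\setminus\{\Lambda\}$ as the full affine line through the origin $\overline{\Lambda}$ in $\Lambda^{\pitchfork}$. Your extra bookkeeping (the trivial case $\Lambda=\overline{\Lambda}$, transversality of the remaining points to $\Lambda$, and identifying $u=0$ with $\overline{\Lambda}$) just makes explicit what the paper leaves implicit.
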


\begin{remark}
If $C$ is a regular  cycle like in  Prop. \ref{C2}, continuing with the notation in the proof, 
we get that the map $\varphi:\mathbb{RP}^{1}=\mathbb{R\cup}\left\{
\infty\right\}  \rightarrow C$ given by $t\mapsto\Gamma\left(  t\right) $,
$\infty\mapsto\overline{\Lambda}$ is a bijection that preserves the double ratio. 
It is because of this that we say that  $\varphi:\mathbb{RP}%
^{1}\rightarrow C$ is a GPP (global projective parametrization).
The GPP's of the cycle  $C$ are determined up to homographies: $h:\mathbb{RP}^{1}\rightarrow\mathbb{RP}^{1}$. 

Moreover, given the GPP  $\varphi:\mathbb{RP}^{1}\rightarrow C$, for each  $t_{0}=\left(  t_{0}:1\right)  \in\mathbb{RP}^{1}$, the map:
\[
\varphi:\mathbb{RP}^{1}\backslash\left\{  t_{0}\right\}  \rightarrow
C\backslash\varphi\left(  t_{0}\right)
\]
is an affine isomorphism, where  $\mathbb{RP}^{1}\backslash\left\{
t_{0}\right\} $ has the canonical affine structure given by the fact that the map:
\[
\mathbb{R\rightarrow RP}^{1}\backslash\left\{  t_{0}\right\}  ,\text{
}t\mapsto\frac{1}{t-t_{0}}=\left(  1:t-t_{0}\right) \, ,
\]
is an affine isomorphism.
\end{remark}

\begin{proposition}
If $\Gamma=\Gamma\left(  t\right)$ is a Jacobi curve with projective parameter and such that $\mathcal{R}_{\Gamma}\left(  t\right)  =0$, $\forall t$, 
then the image of $\Gamma$ is contained in a regular cycle.
\end{proposition}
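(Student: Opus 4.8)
The plan is to show that the two hypotheses force the \emph{derivative curve} of $\Gamma$ to degenerate to a single point $\Delta_*$, and then, reading $\Gamma$ in the affine chart $\Delta_*^{\pitchfork}$, to recognise it as a linearly parametrised affine line, whose closure is a regular cycle containing the image of $\Gamma$.

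First I would unpack the hypothesis. Since $t$ is a projective parameter and $\mathcal{R}_\Gamma(t)=0$, the Ricci curvature operator of $\Gamma$ vanishes identically; equivalently, exactly as in the proof of Proposition \ref{C2} through \eqref{eq:Ricci_Schwarz}, in \emph{any} affine chart $\overline{\Lambda}^{\pitchfork}$ meeting the curve one has $\mathbb{S}(S_t)=0$ for the coordinate representation $\Gamma(t)\simeq\begin{bmatrix} I\\ S_t\end{bmatrix}$, that is $S_t'''=\tfrac{3}{2}S_t''(S_t')^{-1}S_t''$. Next I would prove that the derivative curve is constant. Fix $\tau_0$; since the set of Lagrangian subspaces transverse to a given finite family is open and dense, we may choose $\overline{\Lambda}$ transverse both to $\Gamma(\tau)$ for $\tau$ near $\tau_0$ and to $\Delta_{\tau_0}$. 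Then $S^0_\tau:=\langle\Lambda,\Delta_\tau,\overline{\Lambda}\rangle=S^{\Lambda}_{\overline{\Lambda}^{\pitchfork}}(\Delta_\tau)$ is finite near $\tau_0$, so by Lemma \ref{lem-derivative-1} the matrix $S_\tau''$ is invertible there and $S^0_\tau=S_\tau-2S_\tau'(S_\tau'')^{-1}S_\tau'$, \textit{cfr.}\ \eqref{eq-A-tau}. Differentiating this identity (the computation is the one carried out in Section \ref{sec:Cartan}) yields
\[
\frac{d}{d\tau}S^0_\tau=-3S_\tau'+2S_\tau'(S_\tau'')^{-1}S_\tau'''(S_\tau'')^{-1}S_\tau',
\]
and feeding in $S_\tau'''=\tfrac{3}{2}S_\tau''(S_\tau')^{-1}S_\tau''$ the second summand collapses to $3S_\tau'$, so $\frac{d}{d\tau}S^0_\tau=0$ on a neighbourhood of $\tau_0$. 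Hence $\Delta_\tau$ is locally constant on $I$, and therefore $\Delta_\tau\equiv\Delta_*$.

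To finish I would use Definition \ref{eq:derivative}: the derivative curve is characterised by $\Gamma''_{\Delta_\tau}(\tau)=0$, so $\Delta_\tau\equiv\Delta_*$ gives $\Gamma''_{\Delta_*}(\tau)=0$ for all $\tau$. Since $\Delta_*\in\Gamma(\tau)^{\pitchfork}$ for every $\tau$, the whole curve lies in the chart $\Delta_*^{\pitchfork}$; picking an origin there, $\Gamma_{\Delta_*}$ is a curve in $\overrightarrow{\Delta_*^{\pitchfork}}$ with identically vanishing second derivative, hence $\Gamma_{\Delta_*}(\tau)=w_0+\tau w_1$ for constant $w_0,w_1$, and the image of $\Gamma$ lies on the affine line $\ell=\{\, w_0+\tau w_1 : \tau\in\mathbb{R} \,\}\subset\Delta_*^{\pitchfork}$. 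The direction $w_1=\Gamma'_{\Delta_*}(\tau)$ is a non-degenerate quadratic form because $\Gamma$ is a Jacobi curve (Remark \ref{rem:Sprime}); translated into coordinates this says that the difference of any two distinct points of $\ell$ is an invertible symmetric matrix, i.e.\ they are transverse Lagrangian subspaces (compare \eqref{eq-base-simplectica}). Thus $\ell$ is a regular affine line, $C:=\widetilde{\ell}=\ell\cup\{\Delta_*\}$ is a regular cycle, and $\Gamma(I)\subset\ell\subset C$, which is what we want.

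The main obstacle will be the \emph{choice of chart} in the second step: the closed formula for $S^0_\tau$ — and hence the computation of its derivative — only makes sense where $S_\tau''$ is invertible, so one must know that, for each parameter value, such a chart exists; this is precisely where the genericity of transversality (to select a chart transverse to $\Delta_{\tau_0}$) and the fact that $\tau\mapsto\Delta_\tau$ is a well-defined curve, Theorem \ref{thm-curva-derivativa}, are needed. A chart-independent alternative is to note that $\mathbb{S}(S_t)=0$ is equivalent to the matrix Riccati equation $P_t'=\tfrac12 P_t^{2}$ with $P_t=(S_t')^{-1}S_t''$, integrate it, and deduce that $S_t$ is a fractional-linear curve; the derivative-curve argument, however, has the merit of producing directly the chart $\Delta_*^{\pitchfork}$ in which the cycle is a straight line.
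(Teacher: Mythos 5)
Your proof is correct, but it takes a genuinely different route from the paper's. The paper works directly with the third--order equation $\mathbb{S}(S_t)=0$: it invokes uniqueness of the solution with prescribed $2$--jet $(S_0,S'_0,S''_0)$ and asserts that these jets are realised by the fractional--linear curves $S_t=\frac{at+b}{ct+d}\dot S_0$, whose images lie on cycles. You instead differentiate the closed formula (\ref{eq-A-tau}) for the derivative curve and use $S'''_\tau=\tfrac32 S''_\tau(S'_\tau)^{-1}S''_\tau$ to get $(S^0_\tau)'=0$, hence $\Delta_\tau\equiv\Delta_*$ by connectedness of $I$; the defining property $\Gamma''_{\Delta_*}(\tau)=0$ of Definition \ref{eq:derivative} then linearises $\Gamma$ in the chart $\Delta_*^{\pitchfork}$, and non--degeneracy of $\Gamma'_{\Delta_*}$ makes the resulting line regular, so $\Gamma(I)\subset\ell\subset\ell\cup\{\Delta_*\}$. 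What each approach buys: yours exhibits the cycle explicitly (its point at infinity is precisely the constant derivative point $\Delta_*$) and needs no normal form for solutions of the Schwarzian equation --- a real advantage, since the scalar family $\frac{at+b}{ct+d}\dot S_0$ used in the paper only matches $2$--jets with $S_0$ and $S''_0$ proportional to $\dot S_0$ (e.g.\ $S_t=\mathrm{diag}\left(\frac{t}{1-\alpha t},\frac{t}{1-\beta t}\right)$ with $\alpha\neq\beta$ is flat but not of that form, while its derivative curve is indeed constant and your argument applies verbatim); the paper's route, once formulated with matrix fractional--linear maps, yields in addition the explicit homographic form of the parametrisation. Two points you rely on should be made explicit in a final write--up: the hypothesis ``projective parameter and $\mathcal{R}_\Gamma=0$'' is to be read, as in Proposition \ref{C2}, as $R_\Gamma(t)=\mathbb{S}(S_t)=0$ (for a flat curve $\zeta\equiv 0$, so the operator of Definition \ref{def:absolute_curvature} is not literally defined), and the passage from the pointwise choice of chart at $\tau_0$ to local constancy of $S^0_\tau$ uses the smooth dependence of $\Delta_\tau$ on $\tau$, supplied by $B_0(\tau)$ in Lemma \ref{lem-derivative-4}, which you correctly flag as the delicate step.
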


\begin{proof}   The proof is based on the fact that the map $t\mapsto S_{t}$ that satisfies $\mathbb{S}\left(  S_{t}\right)  =0$
is uniquely determined by its values $S_{0}$, $S_{0}^{\prime}$,
$S_{0}^{\prime\prime}$ and, conversely,
 given $S_{0},$ $\overset{.}%
{S}_{0},$ $\overset{..}{S}_{0}$, there exists a unique solution  $t\mapsto
S_{t}$ of the equation $\mathbb{S}\left(  S_{t}\right)  =0$ with $S_{0}=S_{0}$, $S_{0}^{\prime}=\overset{.}{S}_{0}$, $S_{0}^{\prime\prime}=\overset{..}{S}_{0}$. Thus the only solutions of  $\mathbb{S}\left(S_{t}\right)  =0$ have the form:
\[
S_{t}=\frac{at+b}{ct+d}\overset{.}{S}_{0}%
\]
because given $S_{0},$ $\overset{.}{S}_{0},$ $\overset{..}{S}_{0}$, the equations
$S_{0}=S_{0}$, $S_{0}^{\prime}=\overset{.}{S}_{0}$, $S_{0}%
^{\prime\prime}=\overset{..}{S}_{0}$ determine the matrix $\left(
\begin{array}
[c]{cc}%
a & b\\
c & d
\end{array}
\right)  $ up to multiplicative constants.
\end{proof}

\begin{remark}\label{rem}
Given two different points  $\Lambda_{1},\Lambda_{2}\in\overline
{\Lambda}^{\pitchfork}$, with $\overline{\Lambda}\in \mathscr{L}\left(  W\right)
$ there is at least one cycle  $C=L\cup\left\{  \overline{\Lambda}\right\} $ passing through them, where
$L=\left\langle \Lambda_{1},\Lambda_{2}\right\rangle _{\overline{\Lambda
}^{\pitchfork}}$ is the affine line generated by $\Lambda_{1},\Lambda_{2}$ in the affine space
$\overline{\Lambda}^{\pitchfork}$.  We may ask if there are other cycles containing both points.

Taking any $\Lambda
_{0}\in\overline{\Lambda}^{\pitchfork}$  and affine coordinates
$S=\mathcal{S}_{\overline{\Lambda}^{\pitchfork}}^{\Lambda}$ in $\overline
{\Lambda}^{\pitchfork}$ in such a way that the equation of $L$ will be $S_{t}=tS_{1}$
and $S\left(  \Lambda_{0}\right)  =S^{0}$, we then take, $\widetilde{S}=\left(
S-S^{0}\right)  ^{-1}=\mathcal{S}_{\overline{\Lambda}_{0}^{\pitchfork}%
}^{\overline{\Lambda}}$ en $\overline{\Lambda}^{\pitchfork}\cap\overline
{\Lambda}_{0}^{\pitchfork}$, and the equation for $L$ in coordinates
$\widetilde{S}$ becomes $\widetilde{S}_{t}=\left(  tS_{1}-S^{0}\right)  ^{-1}$, whose
image is not an affine line in general in $\overline{\Lambda}_{0}^{\pitchfork}$ unless $\Lambda_{0}\in L$ (that is, $S_{0}=t_{0}S^{0}$. 
This implies that $\widetilde{C}=\left\langle \Lambda_{1},\Lambda
_{2}\right\rangle _{\overline{\Lambda}_{0}^{\pitchfork}}\cup\left\{
\overline{\Lambda}_{0}^{\pitchfork}\right\} $ is another cycle containing
$\Lambda_{1},\Lambda_{2}$
\end{remark}

\begin{definition}
Three different points $\Lambda_{1},\Lambda_{2},\Lambda_{3}\in \mathscr{L}\left(  W\right)$
are called concyclic  if there exists $\Lambda_{0}\in \mathscr{L}\left(  W\right)  $, such that $\Lambda_{1},\Lambda_{2}%
,\Lambda_{3}$ are colinear in the affine space $\Lambda
_{0}^{\pitchfork}$. The three points  $\Lambda_{1},\Lambda_{2},\Lambda_{3}$ are said to be in general position
(GP for short) if $\Lambda_{i}\cap\Lambda_{j}=0$ for $i\neq j$.
\end{definition}

If $\Lambda_{1},\Lambda_{2},\Lambda_{3}\in \mathscr{L}\left(  W\right)$
are GP concyclic, we denote by $C\left(  \Lambda_{1},\Lambda_{2},\Lambda_{3}\right)$ the set:
$$
C\left(  \Lambda_{1},\Lambda_{2},\Lambda_{3}\right) = \left\{  \Lambda
_{1},\Lambda_{2},\Lambda_{3}\right\}  \cup  \left\{  \overline{\Lambda}\in \mathscr{L}\left(  W\right)  :\Lambda
_{1},\Lambda_{2},\Lambda_{3}\text{ are colinear in \, }\overline{\Lambda
}^{\pitchfork}\right\}
$$

\begin{proposition}\label{C1}
If  $\Lambda_{1},\Lambda_{2},\Lambda_{3}$ are GP concyclic,
then $C\left(  \Lambda_{1},\Lambda_{2},\Lambda_{3}\right)  $ is a 
regular cycle, and it is the only cycle that contains the three points.
\end{proposition}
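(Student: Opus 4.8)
The plan is to reduce the statement to elementary affine geometry by means of a single structural fact: \emph{in a regular cycle any two distinct points are transverse}. First I would record this. If $C=L\cup\{\overline{\Lambda}\}$ is a regular cycle, with $L$ a regular affine line in $\overline{\Lambda}^{\pitchfork}$, then two distinct points of $L$ are transverse by regularity of $L$, while $\overline{\Lambda}$ is transverse to each point of $L$ because $L\subset\overline{\Lambda}^{\pitchfork}$; hence $C\setminus\{\Lambda\}\subset\Lambda^{\pitchfork}$ for every $\Lambda\in C$. Combined with the corollary to Proposition~\ref{C2}, this says that $C\setminus\{\Lambda\}$ is an \emph{affine line} inside the chart $\Lambda^{\pitchfork}$.

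Uniqueness among regular cycles then follows at once. Let $C$ be any regular cycle through the distinct points $\Lambda_1,\Lambda_2,\Lambda_3$. By general position $\Lambda_2,\Lambda_3\in\Lambda_1^{\pitchfork}$, and $C\setminus\{\Lambda_1\}$ is the affine line in $\Lambda_1^{\pitchfork}$ through the two distinct points $\Lambda_2$ and $\Lambda_3$; since two points determine a unique affine line, $C\setminus\{\Lambda_1\}=\langle\Lambda_2,\Lambda_3\rangle_{\Lambda_1^{\pitchfork}}$, and therefore $C=\langle\Lambda_2,\Lambda_3\rangle_{\Lambda_1^{\pitchfork}}\cup\{\Lambda_1\}$ is completely determined by $\Lambda_1,\Lambda_2,\Lambda_3$.

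Next I would exhibit one such regular cycle and identify it with $C(\Lambda_1,\Lambda_2,\Lambda_3)$. Concyclicity provides $\Lambda_0$ with $\Lambda_1,\Lambda_2,\Lambda_3$ collinear in $\Lambda_0^{\pitchfork}$; put $L_0=\langle\Lambda_1,\Lambda_2,\Lambda_3\rangle_{\Lambda_0^{\pitchfork}}$. In coordinates $S=\mathcal{S}_{\Lambda_0^{\pitchfork}}^{\Lambda_1}$ with origin $\Lambda_1$ the line reads $L_0(t)=t\dot{S}_0$ with $\dot{S}_0=S(\Lambda_2)$; as $\Lambda_2\in\Lambda_1^{\pitchfork}$, Remark~\ref{rmk-coordinates-S} gives $\det\dot{S}_0\neq 0$, so $L_0$ is regular and $C_0:=L_0\cup\{\Lambda_0\}$ is a regular cycle containing the three points. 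To get $C(\Lambda_1,\Lambda_2,\Lambda_3)=C_0$: the three points lie in $C(\Lambda_1,\Lambda_2,\Lambda_3)$ by definition, and for $\overline{\Lambda}\in C_0$ distinct from them the structural fact makes $C_0\setminus\{\overline{\Lambda}\}$ an affine line in $\overline{\Lambda}^{\pitchfork}$ containing $\Lambda_1,\Lambda_2,\Lambda_3$, so $\overline{\Lambda}\in C(\Lambda_1,\Lambda_2,\Lambda_3)$; conversely, an $\overline{\Lambda}\in C(\Lambda_1,\Lambda_2,\Lambda_3)$ not among the three carries, via the same construction but in the chart $\overline{\Lambda}^{\pitchfork}$, a regular cycle $C'=\langle\Lambda_1,\Lambda_2,\Lambda_3\rangle_{\overline{\Lambda}^{\pitchfork}}\cup\{\overline{\Lambda}\}$ through the three points, which equals $C_0$ by the uniqueness step, so $\overline{\Lambda}\in C_0$. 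Hence $C(\Lambda_1,\Lambda_2,\Lambda_3)=C_0$ is a regular cycle. Uniqueness among \emph{all} cycles then also holds, since a singular cycle $L\cup\{\overline{\Lambda}\}$ cannot contain three points in general position: at most one of $\Lambda_1,\Lambda_2,\Lambda_3$ equals $\overline{\Lambda}$, so two of them lie on the singular line $L$ and are non-transverse, contradicting general position.

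The main obstacle --- in fact the only point requiring care --- is checking that the distinguished point $\overline{\Lambda}$ of a regular cycle is transverse to all the remaining points, so that deleting \emph{any} point of $C$ really does place the remainder inside an affine chart where ``two points determine a line'' is legitimate; everything else is bookkeeping with the charts $\mathcal{S}_{\overline{\Lambda}^{\pitchfork}}^{\Lambda}$, Remark~\ref{rmk-coordinates-S}, and the corollary to Proposition~\ref{C2}.
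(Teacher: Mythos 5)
Your proof is correct, but it takes a genuinely different route from the paper's. The paper proves the set equality $C(\Lambda_1,\Lambda_2,\Lambda_3)=L\cup\{\overline{\Lambda}\}$ directly in coordinates: the inclusion $L\subset C(\Lambda_1,\Lambda_2,\Lambda_3)$ via the explicit inversion computation $S_i=\lambda_i S_1\Rightarrow S_i^{-1}=\lambda_i^{-1}S_1^{-1}$ at a point of $L$ taken as origin, and the reverse inclusion by appealing to Remark \ref{rem} (an inversion centered off $L$ does not carry $L$ to a line), with uniqueness then read off as in its part (d). You instead take the corollary to Proposition \ref{C2} as the workhorse: deleting any point of a regular cycle leaves an affine line in that point's chart (your transversality observation is exactly what makes this statement meaningful, and the corollary as printed should indeed say ``an affine line in $\Lambda^{\pitchfork}$''), so two GP points determine the cycle and you get uniqueness among regular cycles first; existence then comes from concyclicity plus the invertibility-of-coordinates criterion (your citation of Remark \ref{rmk-coordinates-S} is a little loose, but the fact that $S(\Lambda_2)$ is invertible precisely when $\Lambda_2\in\Lambda_1^{\pitchfork}$ is the same one the paper uses), and the inclusion $C(\Lambda_1,\Lambda_2,\Lambda_3)\subset C_0$ is obtained by manufacturing, for each $\overline{\Lambda}\in C(\Lambda_1,\Lambda_2,\Lambda_3)$, a regular cycle inside $\overline{\Lambda}^{\pitchfork}$ and invoking your uniqueness step instead of Remark \ref{rem}. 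What your organization buys: the reverse inclusion no longer rests on the remark's informal ``not a line in general'' claim (which, as used in the paper's part (b), has to be strengthened from a statement about the whole image of $L$ to one about just three collinear images), and the impossibility of a singular cycle through GP points is made explicit rather than parenthetical. What it costs is only apparent: the corollary to Proposition \ref{C2} encapsulates the same inversion computation, so the underlying calculation is identical, packaged once and reused.
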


\begin{proof}
If $\Lambda_{1},\Lambda_{2},\Lambda_{3}\in \mathscr{L}\left(  W\right)$ are GP concyclic, let $\overline{\Lambda}\in \mathscr{L}\left(  W\right)
$, and  $L$ an affine line in $\overline{\Lambda}^{\pitchfork}$ such that
$\Lambda_{1},\Lambda_{2},\Lambda_{3}\in L$.  Consider the cycle
$C=L\cup\left\{  \overline{\Lambda}\right\}$.   Necessarily the cycle $C$ is regular because $\Lambda_{1}\cap\Lambda_{2}=0$) and we get:

a) $L\subset$ $C\left(  \Lambda_{1},\Lambda_{2},\Lambda_{3}\right)$, because if $\Lambda_{0}\in L\backslash\left\{  \Lambda_{1},\Lambda_{2},\Lambda
_{3}\right\}  $, taking $S=\mathcal{S}_{\overline{\Lambda}^{\pitchfork}%
}^{\Lambda_{0}}$, and $S\left(  \Lambda_{i}\right)  =S_{i}$, $i=0,1,2,3$, with
$S_{0}=0$, and because $\Lambda_{0},\Lambda_{1},\Lambda_{2}$ are colinear in 
$\overline{\Lambda}^{\pitchfork}$, there exist $\lambda_{i}\neq0$ such that
$S_{i}=\lambda_{i}S_{1}$,  with $i=0,2,3$. 

Moreover, because  $\Lambda_{0}\cap\Lambda_{i}=0$ the matrices $S_{i}$ are invertible and if $\widetilde{S}=\mathcal{S}_{\Lambda_{0}^{\pitchfork}}^{\overline{\Lambda}}$, then
$\widetilde{S}=S^{-1}$ in $\overline{\Lambda}^{\pitchfork}\cap\Lambda
_{0}^{\pitchfork}$.   Calling  $\widetilde{S}\left(  \Lambda
_{i}\right)  =\widetilde{S}_{i}$, we get $\widetilde{S}_{i}=S_{i}^{-1}$.
Hence:
\[
S_{i}=\lambda_{i}S_{1}\Longrightarrow\widetilde{S}_{i}=S_{i}^{-1}=\frac
{1}{\lambda_{i}}S_{1}^{-1}=\frac{1}{\lambda_{i}}\widetilde{S}_{1}\text{,
}i=2,3 \, ,
\]
and we conclude that  $\Lambda_{1},\Lambda_{2},\Lambda_{3}$ are colinear in $\Lambda_{0}^{\pitchfork}.$

b) We prove now that  $C\left(  \Lambda_{1},\Lambda_{2},\Lambda_{3}\right)
\subset L\cup\left\{  \overline{\Lambda}\right\}  $.  Namely, if
$\Lambda_{0}\in C\left(  \Lambda_{1},\Lambda_{2},\Lambda_{3}\right)
\cap\overline{\Lambda}^{\pitchfork}$, in  Remark \ref{rem} it was shown that $L$ is also a line in $\overline{\Lambda}_{0}%
^{\pitchfork}$ if and only if $\Lambda_{0}\in L$. Thus because $\Lambda_{0}\in C\left(  \Lambda_{1},\Lambda_{2},\Lambda_{3}\right)  $, we conclude that  $\Lambda_{0}\in L.$

d) Now we will check that  $C\left(  \Lambda_{1},\Lambda_{2},\Lambda_{3}\right)  $ is the unique
cycle containing $\Lambda_{1},\Lambda_{2},\Lambda_{3}$.  If $C$ is a cycle containing $\Lambda_{1},\Lambda_{2},\Lambda_{3}$
we can write $C=L\cup\left\{  \overline{\Lambda}\right\}  $ with $L$ an affine line
in $\overline{\Lambda}^{\pitchfork}$ such that $\Lambda_{1},\Lambda_{2},\Lambda_{3}\in L$, and we just proved that the equality  $C=C\left(  \Lambda_{1},\Lambda_{2},\Lambda_{3}\right)$, is satisfied.
\end{proof}

\begin{remark}
If $\Lambda_{1},\Lambda_{2},\Lambda_{3}$ are concyclic but they are not in general position, then they lie in a singular line  $L$ in $\overline{\Lambda}^{\pitchfork}$ for some $\overline{\Lambda}\in \mathscr{L}\left(  W\right)$.
Then, $\Lambda_{i}\cap\Lambda_{j}=0$ for all $i,j$. 
In this situation $C=L\cup\left\{  \overline{\Lambda}\right\}$ is the only cycle that contains them, and it is natural to denote $C=C\left(
\Lambda_{1},\Lambda_{2},\Lambda_{3}\right)  $.
\end{remark}

\begin{corollary}
$\Lambda_{1},\Lambda_{2},\Lambda_{3}\in \mathscr{L}\left(  W\right)  $ are 
concyclic if and only if they lie in the same cycle.
\end{corollary}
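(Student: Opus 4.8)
The plan is to assemble the equivalence directly from Propositions \ref{C1} and \ref{C2}, the corollary following Proposition \ref{C2}, and the remarks around them; essentially no new computation should be needed, the work being to organise the cases correctly.

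For the implication ``concyclic $\Rightarrow$ the three subspaces lie in a common cycle'', I would argue as follows. Assume $\Lambda_1,\Lambda_2,\Lambda_3$ are concyclic and fix $\Lambda_0\in\mathscr{L}(W)$ together with an affine line $L\subset\Lambda_0^{\pitchfork}$ containing all three. If the three subspaces are in general position, Proposition \ref{C1} already says that $C(\Lambda_1,\Lambda_2,\Lambda_3)=L\cup\{\Lambda_0\}$ is a (regular) cycle containing them. If they are not in general position, the remark following Proposition \ref{C1} shows that they lie on a singular line $L\subset\overline{\Lambda}^{\pitchfork}$ for a suitable $\overline{\Lambda}$, and then $C=L\cup\{\overline{\Lambda}\}$ is a cycle containing them. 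In either case the three subspaces belong to one and the same cycle.

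For the converse, suppose $\Lambda_1,\Lambda_2,\Lambda_3$ lie in a cycle $C=\widetilde{L}=L\cup\{\overline{\Lambda}\}$, with $L$ an affine line in $\overline{\Lambda}^{\pitchfork}$. If none of the three coincides with $\overline{\Lambda}$, then all of them belong to $L\subset\overline{\Lambda}^{\pitchfork}$, hence they are colinear in the affine space $\overline{\Lambda}^{\pitchfork}$, and taking $\Lambda_0=\overline{\Lambda}$ exhibits them as concyclic. If one of them, say $\Lambda_3$, equals $\overline{\Lambda}$, I would use that a cycle carries a GPP and is therefore in bijection with $\mathbb{RP}^{1}$, hence infinite; choose a fourth point $\Lambda_0\in C$ distinct from $\Lambda_1,\Lambda_2,\Lambda_3$. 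For a regular cycle the corollary following Proposition \ref{C2} (made explicit by the change of chart $S\mapsto (S-S_0)^{-1}$ appearing in the proof of Proposition \ref{C2}) shows that $C\setminus\{\Lambda_0\}$ is an affine line lying entirely inside $\Lambda_0^{\pitchfork}$; since $\Lambda_1,\Lambda_2,\Lambda_3\in C\setminus\{\Lambda_0\}$, the three subspaces are colinear in $\Lambda_0^{\pitchfork}$, i.e.\ concyclic. Combining the two implications yields the equivalence.

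The step I expect to be the main obstacle is precisely this last one, namely re-presenting the cycle with a new point at infinity that avoids all three given Lagrangian subspaces. For regular cycles this is exactly what the corollary following Proposition \ref{C2} guarantees, so the argument closes cleanly. The delicate point is a singular cycle $C=L\cup\{\overline{\Lambda}\}$ having one of the three points equal to its distinguished vertex $\overline{\Lambda}$: there the corollary to Proposition \ref{C2} is unavailable (a singular line admits no transverse re-presentation), so this case must be treated separately — either by observing that in the context at hand ``cycle'' is understood to mean ``regular cycle'' (a cycle being, by definition, the closure of an affine line, and an admissible Jacobi curve requiring a regular line), or by noting that the only non-general-position triples that are concyclic are, by the uniqueness asserted in the remark after Proposition \ref{C1}, those whose three members already lie on the singular line $L$, which are handled by the first case of the converse. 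I would single this out explicitly when writing the full proof.
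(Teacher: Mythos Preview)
The paper states this corollary without proof, evidently intending it as a direct consequence of Proposition~\ref{C1}, the remark following it, and the corollary after Proposition~\ref{C2}. Your argument assembles exactly those ingredients in the natural way: the forward implication is immediate from the definition of cycle, and for the converse you correctly split into the case where all three points lie on the affine line $L$ (trivial) and the case where one equals the vertex $\overline\Lambda$, handled by re-presenting the regular cycle via the corollary to Proposition~\ref{C2}. This is precisely the route the paper has set up, so your proposal is correct and matches the intended approach; your explicit flagging of the singular-cycle-with-vertex case is a genuine subtlety the paper does not address.
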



\section{Conclusions and Discussion}

The structure of Jacobi curves, i.e., regular smooth curves on the Lagrangian Grassmannian of a symplectic vector space has been completely elucidated.   The construction of the Ricci curvature endomorphism together with the conformal geometric arc of the curve are instrumental to define the family of absolute conformal symplectic curvatures characterizing the curve.    Such construction relies on a new definition of the derivative curve of the Jacobi curve based on the properties of the local affine structure of the Lagrangian Grassmannian.   The natural extension of Cartan's theory of moving frames to the symplectic/Lagrangian setting together with a careful analysis of the Cartan matrix of the symplectic moving frame determined by a Jacobi curve allows to prove a reconstruction theorem for a Jacobi curve out of its conformal symplectic curvatures.

The theory presented here reproduces some fundamental traits in the treatment by Agrachev and Zelenko, like the Ricci curvature tensor of a Jacobi curve,  but following a different line of argument that we hope could clarify some of the geometrical content of the theory developed by these authors.   One of the significant contributions of the present work concerning the construction of curvature invariants is that the order of differentiability of the curvatures constructed according to the the theory presented here is substantially lower that those obtained by the aforementioned authors, thus, for instance, the normal element of arc in \cite{Ag02} is of order 5, while the conformal element of arc presented here is of order 3.  In addition to all this, the algorithmic computations leading to the construction of the curvatures detailed in Sect. \ref{sec:algorithm} can be implemented on any standard symbolic manipulation software.

An important outcome of the theory developed in this paper would be the construction of explicit curvature invariants characterizing solutions of Riccati equations that could be used to provide an alternative description of the phase space portrait of such and related equations.  To end this discussion, it is relevant to point out that Jacobi curves associated to null geodesics on Lorentzian manifolds provide a natural and relevant application of the theory.   The analysis of the new family of spacetime conformal invariants, called  conformal sky-invariants \cite{Ba22}, from the perspective offered by the results obtained in the present article will be the subject of subsequent work.


\section*{Acknowledgements}
The authors acknowledge financial support from the Spanish Ministry of Economy and Competitiveness, through the Severo Ochoa Programme for Centres of Excellence in RD (SEV-2015/0554), and the MINECO research project  PID2020-117477GB-I00.



\end{document}